\documentclass [11pt]{amsart}
\usepackage[top=1.2in, bottom=1.2in, left=1in, right=1in]{geometry}
\usepackage{xcolor}
\usepackage{amssymb,amsmath,amsthm,amsfonts,mathrsfs,amscd}
\usepackage{enumerate}
\usepackage{setspace}
\usepackage{empheq}
\usepackage[all]{xy}
\usepackage{verbatim}
\usepackage[normalem]{ulem}
\usepackage{todonotes}
\usepackage{mathtools}
\usepackage[bookmarks,colorlinks,breaklinks]{hyperref}  
\hypersetup{linkcolor=blue,citecolor=blue,filecolor=blue,urlcolor=blue} 
\usepackage{graphicx}
\usepackage{float}
\usepackage{tikz-cd}

\usepackage{color}
\usepackage{xcolor}

\theoremstyle{plain}
\newtheorem{thm}{Theorem}[section]
\newtheorem{cor}[thm]{Corollary}
\newtheorem{prop}[thm]{Proposition}
\newtheorem{lem}[thm]{Lemma}

\theoremstyle{definition}
\newtheorem{defn}[thm]{Definition}

\newtheorem{aDD^+m}[thm]{ADD^+endum}

\theoremstyle{remark}
\newtheorem{rmk}[thm]{Remark}

\title{Tropicalizing polynomial strata}
\author{Yan Mary He}
\address{Department of Mathematics\\
	University of Oklahoma\\
	Norman, OK 73019}
\email{he@ou.edu}

\author{Chenxi Wu}
\address{Department of Mathematics, University of Wisconsin-Madison, Madison,
WI 53703}
\email{cwu367@wisc.edu}

\date{\today}

\begin{document}

\begin{abstract}
Let ${\rm Poly}_D(\vec\mu^*)$ be the ramification stratum in the parameter space of degree $D \geq 2$ complex polynomials consisting of polynomials with ramification profile $\vec\mu^*$. In this paper, we introduce a space $\mathcal{T}_D(\vec\mu^*)$ of framed decorated polynomial trees and identify it with the dynamical tropicalization of ${\rm Poly}_D(\vec\mu^*)$.

A non-trivial point is that the tropicalization of ${\rm Poly}_D(\vec\mu^*)$ is not available a priori, as the stratum does not come with a previously known proper toroidal compactification. We resolve this issue by identifying ${\rm Poly}_D(\vec\mu^*)$ with a rigidified framed Hurwitz space. Using the twisted admissible cover compactification, together with additional rigidifying data, we construct a proper toroidal compactification and hence an associated Berkovich skeleton. We prove that $\mathcal{T}_D(\vec\mu^*)$ is isomorphic to this skeleton.

We further show that the projectivized tree space $\mathbb P\mathcal{T}_D(\vec\mu^*)$ compactifies ${\rm Poly}_D(\vec\mu^*)$. Finally, we compare this compactification with the DeMarco--McMullen compactification of polynomial moduli by projectivized space of polynomial trees.
\end{abstract}

\maketitle

\section{Introduction}
A central theme in complex dynamics is that degenerating families of maps often acquire limiting combinatorial structures. For polynomials, the work of DeMarco--McMullen \cite{DM08} shows that divergent sequences in the moduli space can be compactified by projectivized polynomial trees. These trees provide a natural dynamical boundary; however, they are not, by themselves, obtained as the Berkovich skeleton of a proper toroidal compactification of the moduli space. On the other hand, compactifications obtained directly from polynomial coefficients are algebraic but do not reflect the intrinsic dynamics of degeneration. This motivates the study of tropicalizations of polynomial parameter spaces through compactifications that are both toroidal in the algebro-geometric sense and compatible with the dynamics of degenerating polynomials.

Let ${\rm Poly}_D:=\left\{f(z)=a_Dz^D+a_{D-1}z^{D-1}+\cdots+a_0 \;:\; a_D\neq 0\right\}$
be the space of all complex polynomials of degree $D\ge 2$. Then ${\rm Poly}_D$ is a complex manifold of dimension $D+1$, and it admits a stratification by ramification profile
$${\rm Poly}_D=\bigsqcup_{\vec\mu^*}{\rm Poly}_D(\vec\mu^*),$$
where the disjoint union runs over all ramification profile $\vec\mu^*=((D),\mu^2, \dots,\mu^r)$. Here $(D)$ records the totally ramified branch value at $\infty$, the partitions $\mu^2,\ldots,\mu^r$ of $D$ record the prescribed non-simple finite ramification profiles, and the remaining finite branch values are simple. 
Each stratum ${\rm Poly}_D(\vec\mu^*)$ is a smooth locally closed subvariety of ${\rm Poly}_D$, and 
$\dim_{\mathbb{C}} {\rm Poly}_D(\vec\mu^*)=r+s+1$, where
$s$ is the number of simple finite branch values.

In this paper, for each ramification profile $\vec\mu^{*}$, we introduce a space $\mathcal{T}_D(\vec\mu^*)$ of {\it framed decorated polynomial trees} and identify it with the {\it tropicalization} of ${\rm Poly}_D(\vec\mu^{*})$.
We note that the tropicalization of the stratum ${\rm Poly}_D(\vec\mu^{*})$ is non-trivial as ${\rm Poly}_D(\vec\mu^{*})$ does not admit a previously known proper toroidal compactification compatible with the dynamics.

To resolve this issue, we introduce
the {\it rigidified framed Hurwitz space} $\mathcal{H}_{\mathrm{rig},0,\mathbb P^1,D}(\vec\mu^*)$.  
We prove that $\mathcal{H}_{\mathrm{rig},0,\mathbb P^1,D}(\vec\mu^*)$ admits a proper toroidal compactification, and hence a canonical {\it Berkovich skeleton} $\Sigma\!\left(\overline{\mathcal{H}}^{{\rm an}}_{\mathrm{rig},0,\mathbb P^1,D}(\vec\mu^*)\right)$ as such given by Thuillier \cite{Thuillier07} and by Abramovich--Caporaso--Payne \cite{ACP15}. Moreover, we introduce the {\it rigidified framed tropical Hurwitz space} $\overline{\mathcal{H}}^{{\rm trop}}_{\mathrm{rig},0,\mathbb P^1,D}(\vec\mu^*)$ and 
prove that it is isomorphic to  
the Berkovic skeleton; see Section \ref{sec_rfhur} and Theorem \ref{thm_rig_framed_CMR16}.
The crucial point is that the stratum ${\rm Poly}_D(\vec\mu^{*})$ is naturally isomorphic to $\mathcal{H}_{\mathrm{rig},0,\mathbb P^1,D}(\vec\mu^*)$; see Remark \ref{rmk_polyD}. Therefore ${\rm Poly}_D(\vec\mu^{*})$ admits a canonical tropicalization.

Our first main theorem states that 
the Berkovich skeleton $\Sigma\!\left(\overline{\mathcal{H}}^{{\rm an}}_{\mathrm{rig},0,\mathbb P^1,D}(\vec\mu^*)\right)$, i.e., the tropicalization of ${\rm Poly}_D(\vec\mu^{*})$, is  isomorphic to the space of framed decorated polynomial trees of profile $\vec\mu^{*}$. 
\begin{thm}\label{thm:main-1}
For each polynomial ramification profile $\vec\mu^*$, there is a canonical isomorphism of extended cone complexes
$$
\Phi_{\vec\mu^*}\colon \overline{\mathcal{T}}_D(\vec\mu^*) \xrightarrow{\sim} 
\overline{\Sigma}\!\left(\overline{\mathcal{H}}^{{\rm an}}_{\mathrm{rig},0,\mathbb P^1,D}(\vec\mu^*)\right).
$$
In particular, $\Phi_{\vec\mu^*}$ is a homeomorphism.
\end{thm}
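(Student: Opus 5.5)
The plan is to factor $\Phi_{\vec\mu^*}$ through the rigidified framed tropical Hurwitz space. By Theorem \ref{thm_rig_framed_CMR16} we already have a canonical isomorphism of extended cone complexes
$$\overline{\mathcal{H}}^{{\rm trop}}_{\mathrm{rig},0,\mathbb P^1,D}(\vec\mu^*) \xrightarrow{\sim} \overline{\Sigma}\!\left(\overline{\mathcal{H}}^{{\rm an}}_{\mathrm{rig},0,\mathbb P^1,D}(\vec\mu^*)\right).$$
So it suffices to construct a canonical isomorphism of extended cone complexes
$$\Psi\colon \overline{\mathcal{T}}_D(\vec\mu^*)\to \overline{\mathcal{H}}^{{\rm trop}}_{\mathrm{rig},0,\mathbb P^1,D}(\vec\mu^*),$$
and then set $\Phi_{\vec\mu^*}$ to be the composite. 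The homeomorphism claim is automatic, since an isomorphism of extended cone complexes is in particular a homeomorphism of the underlying topological spaces.

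\textbf{Constructing $\Psi$ on a single cone.} Given a framed decorated polynomial tree $(T,\ldots)$ of profile $\vec\mu^*$, I read off a tropical admissible cover of a metric tree as follows.
\begin{enumerate}
\item The target tree is the quotient of $T$ by the dynamics. Equivalently, it is the tree spanned by the images of the branch values, with a leg for $\infty$ and legs for the finite branch values.
\item The local degrees on edges give the expansion factors.
\item The decorations at vertices (ramification partitions and local degrees) give the local Hurwitz data.
\item The framing gives the rigidification.
\end{enumerate}
The metric is transported by scaling edge lengths by local degrees; the balancing/Riemann--Hurwitz condition at each vertex is exactly the local degree condition defining polynomial trees. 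Conversely, from a rigidified framed tropical cover I recover the tree, with the source graph serving as $T$. I then check three things:
\begin{enumerate}
\item The two constructions are mutually inverse on each open cone.
\item They are linear in the edge-length coordinates, where the factor $1/\deg$ is an integral invertible rescaling compatible with the lattice structure chosen in the definitions.
\item They respect face maps, since contracting an edge on one side corresponds to contracting the corresponding edges on the other.
\end{enumerate}
Extending by continuity to edges of infinite length then gives the extended complexes.

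\textbf{Gluing and the main obstacle.} Next I need $\Psi$ to glue: the cones on both sides are quotients of orthants by automorphism groups, and $\Psi$ must induce a bijection of these groups. The main obstacle is the matching of automorphisms and rigidification. A tropical admissible cover may have nontrivial automorphisms (monodromy data or permutations of unmarked preimages), which the rigidifying data in Section \ref{sec_rfhur} is designed to kill. I must show that the framing on trees kills exactly the same symmetries, so that the stacky quotient on each side agrees and no cone is folded differently. To handle this, I would first show that the framing of a polynomial tree determines the rigidification data vertex by vertex, by propagating from the totally ramified end at $\infty$ outward. I would then argue by induction on the number of vertices that the automorphism groups of corresponding cones coincide.

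A secondary check is that every combinatorial type of tropical cover appearing in the skeleton is realized by a tree, which is the surjectivity of $\Psi$. This follows from the same inductive reconstruction combined with the realizability statement that is implicit in Theorem \ref{thm_rig_framed_CMR16}.
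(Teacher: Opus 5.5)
Your outer factorization is the paper's: Theorem~\ref{thm:main-1} is Theorem~\ref{thm:comp-2} composed with the inverse of ${\rm trop}_{\Sigma,\mathrm{rig}}$ from Theorem~\ref{thm_rig_framed_CMR16}. The gap is in how you build the middle isomorphism. You try to rebuild the rigidified tropical cover from the dynamical tree, and several of those steps fail.

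(i) The target tree is not a quotient of $T^\infty$ by $F$. Such a quotient is generally not a tree: near the root end, where $F$ is affine of slope $D$, it closes up into a circle.

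(ii) More seriously, ramification partitions and local degrees do not determine the Glynn decoration $\delta$ up to Hurwitz equivalence. In general, distinct Hurwitz classes over the same combinatorial tropical cover index distinct cones of $\overline{\mathcal H}^{\rm trop}_{\mathrm{rig},0,\mathbb P^1,D}(\vec\mu^*)$. This is exactly Glynn's refinement of the Cavalieri--Markwig--Ranganathan space that makes ${\rm trop}_\Sigma$ injective, and it is why the fibers in Theorem~\ref{thm:main-relation-to-DM} involve a choice of decoration. So a map that reads only degrees cannot decide which cone to land in. Your inverse (``the source graph serving as $T$'') forgets $\delta$ and is not injective.

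(iii) Rescaling by $1/\deg$ is not unimodular once $\deg>1$, so it is not an isomorphism of integral structures. Transporting edge lengths also says nothing about the extra coordinates $(\mathbb R_{\ge0}\cup\{\infty\})^{\epsilon}\times\overline{\rho}$ of $\overline\sigma^{\rm rig}_\Xi$.

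(iv) The automorphism matching you single out as the main obstacle, and the surjectivity via ``inductive reconstruction,'' are left unexecuted.

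The missing idea is that no reconstruction is needed. By Definition~\ref{def:framed_decorated_polynomial_tree}(4), a framed decorated polynomial tree carries its entire rigidified type $\Xi=(T,\delta,v,\Gamma^+_{\rm src},\ell_0,\epsilon,\rho)$ as data. By item (6), everything outside the core is forced by the completion procedure and carries no moduli. This gives the bijection $\mathcal I(\vec\mu^*)\to{\rm Stab}^{\rm rig,fr}_{0,B}(\vec\mu^*)$ of Lemma~\ref{lem:core_bijection}. It is surjective because any core can be completed, and injective because the completion has no free parameters. Definition~\ref{def:4.6} then assigns to each $\Xi$ literally the cone $\overline\sigma^{\rm rig}_\Xi$. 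Its coordinates are already target edge lengths together with the $\epsilon$- and $\rho$-coordinates, and its face maps are induced by the same specializations. Hence $\overline{\mathcal T}_D(\vec\mu^*)$ and $\overline{\mathcal H}^{\rm trop}_{\mathrm{rig},0,\mathbb P^1,D}(\vec\mu^*)$ are colimits of the same diagram of orthants, with no automorphism quotients. The identity on each cone glues to the isomorphism, with no rescaling and no inductive automorphism argument.
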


Therefore the tropicalization of each polynomial stratum is described explicitly in terms of decorated metric trees, and the tropical geometry of the polynomial parameter space is related directly to the combinatorial and dynamical structure encoded by these trees.

\medskip

We equip $\mathcal T_D(\vec\mu^*)$ with the cone-complex topology
coming from its description as a finite extended cone complex. 
On the other hand, we also define a {\it geometric topology} on $\mathcal T_D(\vec\mu^*)$ based on convergence of sequences of framed decorated polynomial trees.
We prove that these two topologies agree; see
Proposition \ref{prop:2-top-agree}. 

We next prove that the {\it projectivized} space of framed decorated polynomial tree provides a compactification of ${\rm Poly}_D(\vec\mu^*)$. The space $\mathcal{T}_D(\vec\mu^*)$ carries a natural scaling action of
$\mathbb R_{>0}$, obtained by multiplying all edge length and boundary
length parameters by the same positive constant. We define the
projectivized tree space by
$$
\mathbb{P}\mathcal{T}_D(\vec\mu^*)
:=
\bigl(\mathcal{T}_D(\vec\mu^*)\setminus\{0\}\bigr)/\mathbb R_{>0}.
$$
Thus $\mathbb{P}\mathcal{T}_D(\vec\mu^*)$ records only the relative rates of degeneration,
or equivalently the projective class of the associated toroidal valuation
vector.

The following theorem is the analogue, for each ramification stratum and
with the additional rigidified framed decorations, of the
DeMarco--McMullen compactification of polynomial moduli by projectivized
polynomial trees; see \cite[Theorem 1.5]{DM08}. 

\begin{thm}\label{thm:main-compactification}
For every polynomial ramification profile $\vec\mu^*$, the
projectivized space of framed decorated polynomial trees
$\mathbb P \mathcal{T}_D(\vec\mu^*)$ gives a compactification of
${\rm Poly}_D(\vec\mu^*)$. More precisely, there is a compactification
$$
\overline{{\rm Poly}_D(\vec\mu^*)}^{\,T}
=
{\rm Poly}_D(\vec\mu^*)\sqcup \mathbb{P} \mathcal{T}_D(\vec\mu^*)
$$
whose boundary is homeomorphic to
$\mathbb P \mathcal{T}_D(\vec\mu^*)$. 
\end{thm}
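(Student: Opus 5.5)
The plan is to deduce Theorem \ref{thm:main-compactification} from Theorem \ref{thm:main-1} via the general principle that the link of the skeleton of a toroidal embedding compactifies the open part, in the spirit of hybrid or Morgan--Shalen compactifications. Write $U={\rm Poly}_D(\vec\mu^*)\cong\mathcal{H}_{\mathrm{rig},0,\mathbb P^1,D}(\vec\mu^*)$ (Remark \ref{rmk_polyD}), and let $X=\overline{\mathcal{H}}_{\mathrm{rig},0,\mathbb P^1,D}(\vec\mu^*)$ be its proper toroidal compactification, with boundary divisor $\partial X=\bigcup_i D_i$.

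\textbf{Step 1: the tropicalization map.} Cover $X$ by finitely many toroidal charts $V_\alpha$ in which $\partial X$ is cut out by monomials $x_1\cdots x_k=0$ in local coordinates. For $f\in U$ near the boundary, define the log-coordinate vector $\ell(f)=(-\log|x_1(f)|,\dots,-\log|x_k(f)|)$, lying in the cone $\sigma_\alpha\cong\mathbb R_{\ge0}^k$ of the skeleton $\Sigma(X^{\rm an})$. Using a partition of unity subordinate to the charts, and the fact that changes of chart multiply the monomials by units, which changes $\ell$ by a bounded amount, glue these to a continuous map $\ell\colon U\setminus K\to\Sigma(X^{\rm an})$ defined off a compact set $K$. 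The map is canonical up to bounded error, and $|\ell(f)|\to\infty$ exactly when $f\to\partial X$.

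\textbf{Step 2: the topology.} Set $\overline{U}^{\,T}=U\sqcup\mathbb P\Sigma$. A sequence $f_n\in U$ converges to a ray $[v]\in\mathbb P\Sigma$ if $f_n$ leaves every compact subset of $U$ and $\ell(f_n)/|\ell(f_n)|\to v/|v|$. Topologies on $\mathbb P\Sigma$ and $U$ are the given ones. Via Theorem \ref{thm:main-1}, $\mathbb P\Sigma\cong\mathbb P\mathcal T_D(\vec\mu^*)$, with the $\mathbb R_{>0}$ scaling on trees matching the scaling of cones. By Proposition \ref{prop:2-top-agree}, this convergence can be rephrased as geometric convergence of rescaled framed decorated trees of $f_n$, which is the DeMarco--McMullen-type description. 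I would check that the topology is Hausdorff; since all errors are bounded and are killed by dividing by $|\ell|\to\infty$, this reduces to uniqueness of limits of $\ell(f_n)/|\ell(f_n)|$.

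\textbf{Step 3: compactness and density.} Since $\Sigma$ is a finite cone complex, $\mathbb P\Sigma$ is a finite union of simplices and hence compact. For a sequence in $U$ leaving compact sets, the normalized vectors $\ell(f_n)/|\ell(f_n)|$ lie in the compact link, so a subsequence converges; this gives sequential compactness, and second countability upgrades it to compactness. For density, every ray $v$ in the interior of a cone $\sigma_\alpha$ is realized by the curve $x_i=t^{v_i}$, $t\to0$, in the chart, which lies in $U$. Rays in faces are treated the same way, so $U$ is dense. The boundary is therefore homeomorphic to $\mathbb P\mathcal T_D(\vec\mu^*)$.

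\textbf{Main obstacle.} I expect the hardest part to be Step 1, namely controlling chart changes near deeper strata and making $\ell$ compatible with the face maps of the cone complex. There may also be automorphisms or monodromy, since the compactification is a stack and the skeleton is a generalized cone complex. I would first use that the rigidification makes $X$ an honest toroidal variety, or at worst a quotient by finite groups acting through the cone identifications. With that, the bounded-error gluing goes through chart by chart as in Thuillier's retraction $X^{\rm an}\to\Sigma$, restricted to $\mathbb C$-points via the hybrid norm $|\cdot|^{\epsilon}$, $\epsilon\to0$.
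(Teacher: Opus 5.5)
Your proposal is correct and follows essentially the same route as the paper. Both use $-\log|q_i|$ vectors in toroidal charts of the proper toroidal compactification, chart changes by monomials times units whose bounded error disappears after projectivizing, compactness of the finite projectivized cone complex (identified with $\mathbb P\mathcal T_D(\vec\mu^*)$ via Theorems \ref{thm_rig_framed_CMR16} and \ref{thm:comp-2}), and density via monomial arcs. The differences are minor: the paper never glues a global map $\ell$ but defines convergence to boundary points sequentially, chart by chart, so the gluing obstacle you anticipated does not arise; it also reaches irrational rays by approximating with rational holomorphic arcs, whereas your real-exponent paths $x_i=t^{v_i}$ realize them directly.
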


Finally, we compare our compactification with the compactification of polynomial moduli by DeMarco--McMullen \cite{DM08}. Let $\mathcal{T}_D^{\mathrm{DM}}$ denote the DeMarco--McMullen space of polynomial trees of degree $D$, and let $$ \mathbb{P}\mathcal{T}_D^{\mathrm{DM}}:=(\mathcal{T}_D^{\mathrm{DM}}\setminus\{0\})/\mathbb R_{>0} $$ be its projectivization. For a polynomial ramification profile $\vec\mu^*=((D),\mu^2,\ldots,\mu^r)$, we denote by $ \mathcal{T}_D^{\mathrm{DM}}(\vec\mu^*)\subset \mathcal{T}_D^{\mathrm{DM}}$ the locus of DeMarco--McMullen polynomial trees whose non-simple critical values have ramification profiles $\lambda = \lambda(\vec{\mu}^*)$, counted on the tree with their natural local degrees. Here the multiset $\lambda(\vec{\mu}^*):=\{\mu^i_j-1: i \ge 2, \mu^i_j>1\}$ and $\mu^i=(\mu^i_1,\ldots,\mu^i_{\ell_i})$.
We similarly define $$ \mathbb{P}\mathcal{T}_D^{\mathrm{DM}}(\vec\mu^*) := \bigl(\mathcal{T}_D^{\mathrm{DM}}(\vec\mu^*)\setminus\{0\}\bigr)/\mathbb R_{>0}. $$ This locus is naturally a stratum of the DeMarco--McMullen tree space, but it is not closed in general. Indeed, when several finite critical points collide in a degenerating family, the ramification profile becomes coarser. Thus a face of $ \mathbb{P}\mathcal{T}_D^{\mathrm{DM}}(\vec\mu^*)$ may lie in $ \mathbb{P}\mathcal{T}_D^{\mathrm{DM}}(\vec\mu'^*) $ where $\vec\mu'^*$ is obtained from $\vec\mu^*$ by replacing some subcollection of parts by their sum. 

Let
$\mathbb P\mathcal T_D(\vec\mu^*)^{\mathrm{dyn}}
\subset
\mathbb P\mathcal T_D(\vec\mu^*)$
be the locus of projectivized framed decorated polynomial trees where after forgetting the rigidified framed Hurwitz decoration and keeping only the subtree spanned by the Julia set of the induced map on the boundary, the resulting infinite subtree has non-zero metric. 

Let $\mathbb P\mathcal T_D^{\mathrm{DM,adm}}(\vec\mu^*) \subset \mathbb P\mathcal T_D^{\mathrm{DM}}(\vec\mu^*)$ be the locus of projectivized DeMarco--McMullen polynomial trees whose local degree data are induced by an admissible cover degeneration with ramification profile $\vec\mu^*$. Equivalently, this is the locus where the local tropical Riemann--Hurwitz {\it equality} holds. This condition is automatic for the framed decorated trees constructed in this paper, but it is stronger than the local degree {\it inequality} in the definition of a DeMarco--McMullen polynomial-like tree.

\begin{thm} \label{thm:main-relation-to-DM}
For every polynomial ramification profile $\vec\mu^*$, there is a natural continuous map
$$
\Psi_{\vec\mu^*}\colon
\mathbb P\mathcal T_D(\vec\mu^*)^{\mathrm{dyn}}
\to
\mathbb P\mathcal T_D^{\mathrm{DM}}
$$
obtained by forgetting the rigidified framed Hurwitz decoration and
retaining only the underlying self-map on infinite trees with metric, then collapsing into the span of the Julia set at the boundary to obtain the projectivized DeMarco--McMullen polynomial tree.
Its image is contained in the union of DeMarco--McMullen strata
$$
\bigcup_{\vec\mu'^*\succeq \vec\mu^*}
\mathbb P\mathcal T_D^{\mathrm{DM}}(\vec\mu'^*),
$$
where $\vec\mu'^*\succeq \vec\mu^*$ if
$\vec\mu'^*$ is obtained from $\vec\mu^*$ by colliding some finite
critical points.

Moreover, the restriction of $\Psi_{\vec\mu^*}$ to the open
non-collision locus has image
$\mathbb P\mathcal T_D^{\mathrm{DM,adm}}(\vec\mu^*)$. 
The fibers of $\Psi_{\vec\mu^*}$ are, up to the Hurwitz
equivalence, given by the choices of rigidified framed Hurwitz decoration,
including the Glynn decoration and the rigidifying data
$(v,\ell_0,\epsilon)$, as well as the parts of the infinite metric tree that get collapsed to obtain the same underlying
DeMarco--McMullen polynomial tree.
\end{thm}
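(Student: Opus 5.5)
We would prove Theorem \ref{thm:main-relation-to-DM} in three stages: construct $\Psi_{\vec\mu^*}$ and check it is well defined and continuous; locate its image; and describe its fibers.

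\textbf{Construction and continuity.} Start from a framed decorated polynomial tree $(T,F,\text{decoration})$ in $\mathcal T_D(\vec\mu^*)$. Forgetting the Glynn decoration and the rigidifying data $(v,\ell_0,\epsilon)$ leaves the underlying metrized self-map $F\colon T\to T$. This map sends edges linearly with slopes equal to the local degrees and has the totally ramified end at $\infty$. Next form the convex hull $J(F)$ of the Julia set of the induced map on the boundary, and restrict $F$ to it. Collapsing the complementary branches gives a DeMarco--McMullen polynomial tree; by definition of the dyn-locus its metric is nonzero. The local degree inequality of \cite{DM08} then follows from the local tropical Riemann--Hurwitz \emph{equality} satisfied by $T$: restricting to the Julia span can only drop critical contributions, so the equality degrades to the inequality.

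Since the map commutes with the $\mathbb R_{>0}$-scaling, it descends to the projectivizations. For continuity we use Proposition \ref{prop:2-top-agree}, which lets us work with the geometric topology of sequential convergence of trees. On each closed cone the metric on $J(F)$ depends piecewise-linearly on the edge lengths, because the dynamics is determined by the combinatorial type. On the DeMarco--McMullen side, convergence is likewise geometric convergence of trees. The only delicate case is a face where edges of $J(F)$ degenerate to length zero. There, nonvanishing of the total Julia metric on the dyn-locus ensures that the projective class is still defined.

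\textbf{Image.} Critical points of $F$ on $T$ come from the marked ramification points, which carry profile $\vec\mu^*$. When an edge separating critical points collapses in the Julia span, their local degrees add, so the profile read on the DeMarco--McMullen tree is a coarsening $\vec\mu'^*\succeq\vec\mu^*$. This proves the containment. On the non-collision locus no such merging occurs, so the local degree data is that of an admissible cover degeneration of profile $\vec\mu^*$. By Theorem \ref{thm:main-1} the tree is a point of the skeleton of the twisted admissible cover compactification, so the image lies in $\mathbb P\mathcal T_D^{\mathrm{DM,adm}}(\vec\mu^*)$.

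For the reverse inclusion, take a DeMarco--McMullen tree satisfying the Riemann--Hurwitz equality with profile $\vec\mu^*$. Extend it to an infinite tree by attaching the escaping ends and preimage branches dictated by the local degrees. Then choose a tropical admissible cover structure together with a Glynn decoration and rigidifying data. Existence of such lifts is realizability of tropical covers with prescribed local degrees; for genus-$0$ targets we would cite the realizability theorem of \cite{CMR16}, via Theorem \ref{thm_rig_framed_CMR16}.

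\textbf{Fibers, and the main obstacle.} By construction, the fiber over a point consists of the data that was forgotten or collapsed. This means the decorations modulo Hurwitz equivalence, together with the non-Julia parts of the infinite tree that collapse onto the same Julia span. We expect the main obstacle to be the surjectivity onto the adm-locus, namely the extension and lifting step. The difficulty is that the collapsed non-Julia branches must be reconstructed compatibly with the dynamics, not just with the cover structure. To handle this, we would first build the lift on the fundamental annuli between consecutive levels of the escape-rate filtration. We would then propagate it using $F$, checking the Riemann--Hurwitz equality vertex by vertex.
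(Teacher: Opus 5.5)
Your construction of $\Psi_{\vec\mu^*}$ matches the paper's proof essentially step by step. That includes forgetting the decoration and collapsing onto the Julia span, scaling equivariance, continuity via compatibility with face maps, the collision argument for containment in $\bigcup_{\vec\mu'^*\succeq\vec\mu^*}\mathbb P\mathcal T_D^{\mathrm{DM}}(\vec\mu'^*)$, and the ``by construction'' description of fibers. The gap is in the reverse inclusion, i.e.\ showing that every point of $\mathbb P\mathcal T_D^{\mathrm{DM,adm}}(\vec\mu^*)$ is hit by the non-collision locus.

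\textbf{The reverse inclusion.} You propose to rebuild an infinite tree from the given DeMarco--McMullen tree and then lift it, citing the realizability theorem of \cite{CMR16}. Through Theorem \ref{thm_rig_framed_CMR16}, that result only realizes points of the tropical Hurwitz space, i.e.\ metrized tropical admissible covers $\Gamma_{\rm src}\to T$. It says nothing about producing a self-map $F\colon T^\infty\to T^\infty$ that is the polynomial tree completion of a core and whose Julia-span collapse is a \emph{prescribed} tree. You flag this dynamical compatibility as the main obstacle yourself, but the fundamental-annuli plan is not carried out. Even if it were, you would still need to show that the Julia set of your extended tree is the original one, so that collapsing returns the given $[\tau_{\mathrm{DM}}]$. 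Nothing in the proposal addresses this.

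\textbf{How the paper handles it.} The paper avoids reconstruction entirely. By the definition of $\mathbb P\mathcal T_D^{\mathrm{DM,adm}}(\vec\mu^*)$, the given tree is already induced by an admissible cover degeneration of profile $\vec\mu^*$. That degeneration, together with a choice of Glynn decoration, framed vertex $v$, leg $\ell_0$ and $\epsilon$, directly gives a point of $\mathbb P\mathcal T_D(\vec\mu^*)^{\mathrm{dyn}}$ in the non-collision locus. The one additional input is that the classical (type I) Julia set is contained in every nontrivial closed invariant subset \cite[Corollary 5.24]{benedetto2019dynamics}. Hence the invariant tree coming from the degeneration contains the span of the Julia set, and collapsing onto that span recovers exactly the given DeMarco--McMullen tree. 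This is the idea your proposal is missing; with it, the lifting and realizability step you identify as the obstacle is not needed.
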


Therefore the compactification
$$
{\rm Poly}_D(\vec\mu^*)^T
=
{\rm Poly}_D(\vec\mu^*)\sqcup
\mathbb P\mathcal T_D(\vec\mu^*)
$$
is a decorated parameter space refinement of the DeMarco--McMullen
compactification of polynomial moduli. On the dynamical boundary locus,
forgetting decorations and passing from parameter spaces to moduli  spaces gives the
corresponding projectivized DeMarco--McMullen tree boundary, while purely
rigidifying boundary directions have no image in
$\mathbb P\mathcal T_D^{\mathrm{DM}}$.

\smallskip

A closely analogous phenomenon appears in the geometry of strata of
Abelian and quadratic differentials with fixed
orders of zeros and poles. Such a stratum carries more
structure than the underlying moduli space of curves, as it remembers the
differential, together with the zero and pole data, while the
Deligne--Mumford compactification remembers only the limiting stable
curve. Compactifications of strata of Abelian differentials, and more
generally of $k$-differentials, therefore refine the
Deligne--Mumford compactification by adding differential data on stable
curves, together with residue, level, and scaling information. This is
parallel to the relation in Theorem \ref{thm:main-relation-to-DM}. In both settings, boundary strata of a fixed combinatorial type may
meet strata of coarser type: for differentials this occurs when zeros or
poles collide or redistribute on components of a stable curve, while in
our setting it occurs when finite critical points collide and the
ramification profile becomes coarser. See, for example, the incidence
variety and multi-scale compactifications of strata of differentials in
\cite{BCGGMAbelian,BCGGMMultiscale,BCGGMKDiff,CMZKDiff,GendronIVC}.

\subsection{Strategy of proofs}
We begin with the study of three moduli spaces of covers: the Hurwitz space, the framed Hurwitz space, and the rigidified framed Hurwitz space. The Hurwitz space, its compactication and tropicalization have been well-studied; see Section \ref{sec:Hur}.
In particular, Cavalieri--Markwig--Ranganathan \cite{CMR16} and Glynn \cite{Glynn25} proved the {\it skeleton theorem} that the Berkovich skeleton is naturally identified with the corresponding tropical Hurwitz space; see Theorem \ref{thm_CMR16}.

For the framed Hurwitz space, its admissible cover compactification has been studied by Cavalieri \cite{Cavalieri06}. We introduce a tropical framed Hurwitz space and prove the corresponding skeleton theorem; see Theorem \ref{thm_framed_CMR16}.

The rigidified framed Hurwitz space is obtained from a framed Hurwitz space by adding rigidifying data on the source curve $\mathbb P^1$, in such a way that the source automorphism group is trivial. Then we show that the rigidified framed Hurwitz space admits a proper toroidal compactification. Similar to the framed Hurwitz space case, we introduce a rigidified framed tropical Hurwitz space and prove the corresponding skeleton theorem; see Theorem \ref{thm_rig_framed_CMR16}.  

To prove Theorem \ref{thm:main-1},
we show that the space $\mathcal{T}_D(\vec\mu^*)$ of trees is homeomorphic to the tropical rigidified framed Hurwitz space; see Theorem \ref{thm:comp-2}. This combined with Theorem \ref{thm_rig_framed_CMR16} gives Theorem \ref{thm:main-1}.

For Theorem \ref{thm:main-compactification}, we adapt the structure of the DeMarco--McMullen compactification of polynomial moduli by projectivized polynomial trees \cite[Theorem 1.5]{DM08}. We use the projectivized logarithmic rates at which polynomials approach the toroidal boundary of the rigidified framed Hurwitz space. These rates define projectivized tropical limits in $\mathbb P\mathcal T_D(\vec\mu^*)$, independently of the choice of toroidal coordinates. We show that every divergent sequence in ${\rm Poly}_D(\vec\mu^*)$ has such a limit after passing to a subsequence, and conversely that every point of $\mathbb P\mathcal T_D(\vec\mu^*)$ is realized by smoothing an appropriate toroidal boundary stratum. This allows us to attach $\mathbb P\mathcal T_D(\vec\mu^*)$ as the compactifying boundary of ${\rm Poly}_D(\vec\mu^*)$.

For Theorem \ref{thm:main-relation-to-DM}, we compare this decorated compactification with the DeMarco--McMullen compactification of polynomial moduli. One essential distinction is that $\mathbb P\mathcal T_D(\vec\mu^*)$ compactifies a parameter stratum and therefore remembers rigidified framed Hurwitz data, while the DeMarco--McMullen space remembers only the underlying projectivized polynomial tree in moduli. Furthermore, the infinite trees for elements of $\mathbb P\mathcal T_D(\vec\mu^*)$ are invariant subtrees that contains all the critical points hence might be larger than the DeMarco--McMullen polynomial tree. We show that on the dynamical locus
$\mathbb P\mathcal T_D(\vec\mu^*)^{\rm dyn}
\subset
\mathbb P\mathcal T_D(\vec\mu^*)$, forgetting the Glynn decoration, the framed target component, the marked source point, and the tangent vector rigidification, then do a collapsing to a subtree spanned by the Julia set at the boundary (type-I Julia set), gives the desired map $\Psi_{\vec\mu^*}$. Its image lies in the union of strata corresponding to profiles obtained from $\vec\mu^*$ by collisions of finite critical points, while on the open non-collision locus the image is exactly the DeMarco--McMullen stratum of profile $\vec\mu^*$ whose local degree data are induced by an admissible cover degeneration.

\subsection*{Organization of the paper} The paper is organized as follows.
In Section \ref{sec:prelim}, we review Hurwitz spaces and prove the framed analogue of the skeleton theorem for framed Hurwitz spaces. In Section 3, we introduce the rigidified framed Hurwitz space, construct a proper toroidal compactification, and prove the rigidified framed analogue of the skeleton theorem. In Section \ref{sec:4-trees}, we define framed decorated polynomial trees and prove Theorem \ref{thm:main-1}. In Section \ref{sec:pf-1.2}, we prove Theorem \ref{thm:main-compactification}, and in Section \ref{sec:pf-1.3}, we prove Theorem \ref{thm:main-relation-to-DM}.

\section{Preliminaries}\label{sec:prelim}
In this section, we first review Hurwitz spaces, their admissible cover compactifications and the associated Berkovich skeleta, and the tropical Hurwitz spaces of Cavalieri--Markwig--Ranganathan \cite{CMR16} and Glynn \cite{Glynn25} in Section \ref{sec:Hur}. Then in Section \ref{sec_fhur}, we discuss the framed Hurwitz space, which is the intermediate moduli space from which the rigidified framed Hurwitz space of Section 3 will be obtained. We define the framed tropical Hurwitz space and prove the framed skeleton theorem; see Theorem \ref{thm_framed_CMR16}.

\subsection{Hurwitz spaces, Berkovich skeleton and tropical Hurwitz spaces}\label{sec:Hur}
Fix a vector $\vec{\mu} = (\mu^1,\ldots,\mu^r)$ of partitions of $D$. We regard the partitions $\mu^i$ as the prescribed non-simple ramification profiles.
Denote by $\mathcal{H}_{g,h,D}(\vec{\mu})$ the Hurwitz space of degree $D$ Hurwitz covers $[C_2 \to C_1]$ of smooth genus $h$ curves by genus $g$ curves with ramification $\mu^i$ over smooth marked points $p_i$ of $C_1$ and simple ramification over smooth marked points $q_1,\ldots,q_s$. Here $\mu^i$ is the vector of multiplicities of preimages of $p_i$ and simple ramification means that the ramification profile over each $q_j$ is $ (2,1^{D-2})$. Equivalently, Hurwitz space parametrizes holomorphic maps from Riemann surface $C_2$ to Riemann surface $C_1$ with prescribed ramification profile $\vec{\mu}$, up to pre- and post-composition by isomorphisms.

In \cite{HM82}, Harris--Mumford gave a compactification of the Hurwitz space $\mathcal{H}_{g, h, D}(\Vec{\mu})$ by the space of {\it admissible covers}. The space of admissible covers is in general not normal; however, its normalization is always smooth and admits an interpretation as the stack of twisted stable maps by Abramovich--Corti--Vistoli \cite{ACV03}. We refer to the normalized stack as the stack of admissible covers $\overline{\mathcal{H}}_{g,h,D}(\Vec{\mu})$, which is a smooth compactification of the Hurwitz space $\mathcal{H}_{g,h,D}(\Vec{\mu})$. 

\begin{rmk} \label{rmk_toroidal}
By \cite[Section 3.1.1]{CMR16}, the stack $\overline{\mathcal{H}}_{g,h,D}(\Vec{\mu})$ is a smooth Deligne--Mumford stack and the boundary $\overline{\mathcal{H}}_{g,h,D}(\Vec{\mu}) \setminus \mathcal{H}_{g,h,D}(\Vec{\mu})$ is a divisor with normal crossings. Therefore, the inclusion $\mathcal{H}_{g,h,D}(\Vec{\mu}) \to \overline{\mathcal{H}}_{g,h,D}(\Vec{\mu})$ is a {\it toroidal embedding} of Deligne--Mumford stacks.
\end{rmk}

\medskip

Given a toroidal embedding $U \to X$ of Deligne--Mumford stacks, there is an associated cone complex with integral structure called the {\it skeleton} $\Sigma(X)$.
Abramovich--Caporaso--Payne \cite{ACP15}, building on the work of Thuillier \cite{Thuillier07}, shows that this cone complex naturally lives in the Berkovich analytification $X^{\rm an}$ of $X$ by showing that there is a {\it retraction} map
$\mathcal{R}_X \colon X^{an} \to X^{an}$ whose image is $\overline{\Sigma}(X)$.

Let $\overline{\mathcal{H}}_{g,h,D}^{\rm an}(\Vec{\mu})$ be the Berkovich analytification of $\overline{\mathcal{H}}_{g,h,D}(\Vec{\mu})$, where the field $\mathbb{C}$ is equipped with the trivial (non-Archimedean) norm. By Remark 
\ref{rmk_toroidal}, 
let $\overline{\Sigma}(\overline{\mathcal{H}}^{\rm an}_{g,h,D}(\Vec{\mu}))$ be the skeleton of $\overline{\mathcal{H}}_{g,h,D}(\Vec{\mu})$ as constructed by Thuillier \cite{Thuillier07}.

\medskip

Cavalieri--Markwig--Ranganathan constructed a tropical moduli space $\mathcal{H}_{g,h,D}^{\rm trop}(\Vec{\mu})$ of admissible covers and a natural surjective face morphism from the Berkovich skeleton of the admissible cover compactification to this tropical space; see \cite[Theorem 1]{CMR16}. Glynn \cite{Glynn25} later refined the combinatorial structure, for Hurwitz spaces of maps to $\mathbb{P}^1$, by using decorated trees that index the irreducible boundary strata; with this refinement, the associated tropical Hurwitz space is isomorphic to the skeleton; see \cite[Corollary 1.4]{Glynn25}.
Now we summarize Glynn's construction of a tropical Hurwitz space $\overline{\mathcal{H}}^{\rm trop}(\Vec{\mu})$.

\subsubsection{Glynn's Tropical Hurwitz spaces}
Let $B$ be a finite subset of $S^2$ with $|B| \ge 3$, corresponding to the set of critical points. A {\it $B$-marked stable tree} is a connected tree $T$ with $|B|$ legs together with a labelling from $B$ to the legs; see \cite[Section 2.2]{Glynn25}.
Let $T$ be a $B$-marked stable tree. Glynn defined a $\Vec{\mu}$-decoration of $T$; see \cite[Definition 4.1]{Glynn25}. Roughly speaking, a decoration of $T$ records certain {\it monodromy} information which comes from embedding $T$ into $S^2$ of a branched cover of $S^2$. A $B$-marked stable tree $T$ with a $\Vec{\mu}$-decoration is called a {\it $\Vec{\mu}$-decorated tree}.

Two $\Vec{\mu}$-decorations of a stable tree $T$ are {\it Hurwitz equivalent} if one can be obtained from the other via a sequence of global conjugations and braid moves of decorated trees; see \cite[Section 4.3]{Glynn25} for definitions. 
We denote by
$${\rm Stab}_{0,B}(\Vec{\mu})$$
the set of Hurwitz equivalence classes of $\Vec{\mu}$-decorated trees.

Glynn defined the {\it tropical Hurwitz space} $\overline{\mathcal{H}}^{\rm trop}(\Vec{\mu})$ as follows; see \cite[Section 8]{Glynn25} for details.
For each $\Theta \in {\rm Stab}_{0,B}(\Vec{\mu})$, we define the extended cone
$$\overline{\sigma}_{\Theta} := (\mathbb{R}_{\ge 0} \cup \{\infty\})^{|E(T)|}.$$
Define
$$\overline{\mathcal{H}}^{\rm trop}(\Vec{\mu}) := \lim_{\rightarrow}(\overline{\sigma}_{\Theta},j_{\omega})$$
where $\Theta$ ranges over elements in ${\rm Stab}_{0,B}(\Vec{\mu})$ and $j_\omega$ ranges over edge contractions of $T$.

Combining results of Cavalieri--Markwig--Ranganathan and Glynn gives the following theorem.

\begin{thm}[{\cite[Theorem 1]{CMR16},\cite[Corollary 1.4]{Glynn25}}] \label{thm_CMR16}
We have the following commutative diagram
$$
\xymatrix{
 \overline{\mathcal{H}}_{g, 0, D}^{\rm an}(\Vec{\mu}) \ar[d]_{\mathrm{trop}} \ar[r]^{\mathcal{R}\quad} & \overline{\Sigma}( \overline{\mathcal{H}}_{g, 0, D}^{\rm an}(\Vec{\mu})) \ar[dl]^{{\rm trop}_\Sigma}  \\ 
 \overline{\mathcal{H}}^{\rm trop}(\Vec{\mu}) &
}
$$
where $\mathcal R$ is the Berkovich retraction onto the skeleton. 
Moreover, the map ${\rm trop}_\Sigma \colon \overline{\Sigma}(\overline{\mathcal{H}}^{\rm an}_{g, 0, D}(\Vec{\mu})) \to \overline{\mathcal{H}}^{\rm trop}(\Vec{\mu})$ is an isomorphism of extended cone complexes.
\end{thm}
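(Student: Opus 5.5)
This statement is a combination of \cite[Theorem 1]{CMR16} and \cite[Corollary 1.4]{Glynn25}, so the plan is to assemble it from those two results rather than reprove them. I will check that the objects match and that the maps compose.

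\emph{Step 1: construct the diagram.} By Remark \ref{rmk_toroidal}, the embedding $\mathcal{H}_{g,0,D}(\Vec{\mu})\subset\overline{\mathcal{H}}_{g,0,D}(\Vec{\mu})$ is toroidal. Thuillier \cite{Thuillier07} and \cite{ACP15} then supply the skeleton $\overline{\Sigma}$ together with the retraction $\mathcal{R}$.

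The map $\mathrm{trop}$ is defined pointwise. A point of $\overline{\mathcal{H}}^{\rm an}_{g,0,D}(\Vec{\mu})$ is represented by a $K$-point for some valued extension $K$ of the trivially valued field $\mathbb{C}$. After a finite extension, this extends to an admissible cover over the valuation ring. Its special fiber gives a dual graph of the target, which is a $B$-marked stable tree $T$ of genus $0$. The monodromy around the nodes gives a $\Vec{\mu}$-decoration, well defined up to Hurwitz equivalence. The edge lengths are the valuations of the smoothing parameters $xy=t_e$ at the target nodes.

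\emph{Step 2: commutativity.} Over each toroidal chart, $\mathcal{R}$ retracts a point to the monomial point determined by the valuations of the local boundary equations. Near a boundary point of type $\Theta$, the boundary divisors of $\overline{\mathcal{H}}$ correspond to the edges of the target tree. The local toroidal coordinates are, up to units, the target smoothing parameters $t_e$: in the normalized (twisted) stack these are genuinely smooth coordinates, even though on the source the parameters are roots $t_e^{1/m}$. Hence $\mathrm{trop}$ depends only on $\mathcal{R}$ of the point, so $\mathrm{trop}=\mathrm{trop}_\Sigma\circ\mathcal{R}$. The same computation shows that $\mathrm{trop}_\Sigma$ restricted to the cone $\sigma_S$ of a stratum $S$ is an isomorphism onto $\overline{\sigma}_{\Theta}$, compatible with face inclusions and edge contractions $j_\omega$.

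\emph{Step 3: bijectivity on cones.} This is the content of CMR16 and Glynn, and I expect it to be the main obstacle. CMR16 only shows that $\mathrm{trop}_\Sigma$ is surjective and face-preserving. The remaining issue is that several irreducible boundary strata, that is, several cones of $\overline{\Sigma}$, may carry the same unlabelled combinatorial type. Glynn's refinement resolves this by showing that irreducible boundary strata of type $T$ are in bijection with Hurwitz equivalence classes of $\Vec{\mu}$-decorations on $T$, namely ${\rm Stab}_{0,B}(\Vec{\mu})$. This works because braid moves and global conjugation realize exactly the monodromy ambiguity along paths in the connected stratum.

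Given this bijection between cones, together with the facewise isomorphisms and compatible gluings from Step 2, $\mathrm{trop}_\Sigma$ induces an isomorphism of the colimits defining the two extended cone complexes. It therefore extends to the compactified cones $\overline{\sigma}$ by continuity, which completes the proof.
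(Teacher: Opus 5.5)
Your plan is essentially the paper's: the paper gives no independent argument and obtains the statement by combining \cite[Theorem 1]{CMR16} (factorization through $\mathcal R$, facewise isomorphism, surjectivity) with \cite[Corollary 1.4]{Glynn25} (Glynn's decorated trees index the boundary strata exactly, which gives injectivity on cones). One correction to your Step 2: in the normalized (twisted) stack the target smoothing parameter $t_e$ is not a local toroidal coordinate once the source nodes over $e$ have ramification indices with $L_e:=\operatorname{lcm}_j p(e,j)>1$ (already the local equation $\xi^2=t$ in degree $2$ shows this); the coordinate is a root $\tau_e$ with $\tau_e^{L_e}=t_e$. Consequently $\mathrm{trop}_\Sigma$ multiplies the $e$-th skeleton coordinate by $L_e$, which is still a linear isomorphism of each cone compatible with faces, so your conclusion survives.
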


\subsection{Framed Hurwitz spaces}\label{sec_fhur}
We denote by $\mathcal{H}_{g,\mathbb{P}^1,D}(\vec{\mu})$ the {\it framed Hurwitz space} of degree $D$ Hurwitz covers $[C_2 \to \mathbb{P}^1]$ of a parametrized $\mathbb{P}^1$ by genus $g$ curves $C_2$ with ramification profile $\vec{\mu}$. In other words, a framed Hurwitz space is 
a Hurwitz space where we do not quotient by $\mathrm{Aut}(\mathbb{P}^1)$ on the base.

By the work of Cavalieri \cite{Cavalieri06}, the Abramovich--Corti--Vistoli \cite{ACV03} theory still gives a twisted admissible cover compactification $\overline{\mathcal{H}}_{g,\mathbb{P}^1,D}(\vec{\mu})$, which is the (smooth) stack denoted by $$\overline{{\rm Adm}}\!\left(g \xrightarrow{D} \mathbb{P}^1, (\mu^1,\dots,\mu^r)\right).$$

\begin{rmk}
In Cavalieri's compactification, the target is allowed to degenerate to a semistable expansion of the parametrized $\mathbb{P}^1$. Equivalently, boundary points of $\overline{\mathcal H}_{g,\mathbb{P}^1,D}(\vec{\mu})$ correspond to admissible covers of expanded genus-zero targets together with a distinguished parametrized component. Thus, although the open framed Hurwitz space parametrizes covers of a smooth parametrized $\mathbb{P}^1$, its compactification includes degenerations in which the target acquires additional rational components.
\end{rmk}

\begin{lem}\label{lem_frame_proper}
The compactified framed Hurwitz stack $\overline{\mathcal{H}}_{g,\mathbb P^1,D}(\vec\mu)$ is a proper Deligne--Mumford stack.
\end{lem}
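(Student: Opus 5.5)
To prove Lemma \ref{lem_frame_proper}, I will realize $\overline{\mathcal{H}}_{g,\mathbb P^1,D}(\vec\mu)$ as a closed substack of a space of twisted stable maps to a proper target, and then inherit properness and the Deligne--Mumford property from the general theory of Abramovich--Corti--Vistoli \cite{ACV03}. Following Cavalieri \cite{Cavalieri06}, a point of $\overline{{\rm Adm}}(g \xrightarrow{D} \mathbb P^1,(\mu^1,\dots,\mu^r))$ is a twisted stable map $\mathcal C_1 \to \mathcal B S_D$ from a balanced twisted nodal curve, together with a stabilization map $C_1 \to \mathbb P^1$ of degree one. Here the coarse curve $C_1$ is a semistable expansion of $\mathbb P^1$ with a distinguished component mapping isomorphically to the parametrized $\mathbb P^1$. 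The source $C_2$ is recovered as the associated $S_D$-torsor. Equivalently, the stack is an open and closed substack of $\mathcal K_{0,n}(\mathbb P^1\times \mathcal B S_D, (1,0))$, the stack of twisted stable maps to $\mathbb P^1\times\mathcal B S_D$ of class $([\mathbb P^1],0)$ with $n=r+s$ twisted marked points whose monodromies lie in the conjugacy classes determined by $\mu^1,\dots,\mu^r$ and by transpositions.

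The key steps are as follows.

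\begin{enumerate}[(1)]
\item Invoke \cite[Theorem 3.0.2]{ACV03} (originally Abramovich--Vistoli), which says that for a proper tame target $\mathcal X$ with projective coarse space, the stack $\mathcal K_{0,n}(\mathcal X,\beta)$ is a proper Deligne--Mumford stack with projective coarse moduli space. The target $\mathbb P^1\times\mathcal B S_D$ is proper, tame in characteristic zero, and has coarse space $\mathbb P^1$.
\item Check that imposing the monodromy conditions at the markings cuts out an open and closed substack. This holds because the evaluation maps to the rigidified cyclotomic inertia stack are proper and conjugacy classes form a discrete set.
\item Identify this substack with Cavalieri's stack, including the requirement that exactly the right branch points are marked and that $C_2$ has genus $g$. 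The genus is locally constant in flat families by Riemann--Hurwitz, so this condition is again open and closed.
\item Conclude that a closed substack of a proper Deligne--Mumford stack is proper and Deligne--Mumford.
\end{enumerate}

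Alternatively, properness can be verified directly by the valuative criterion. Given a family of admissible covers over a punctured disc, the stable-maps limit for $C_1\to\mathbb P^1$ produces the limit target, and the torsor extends after a finite base change because of the twisted structure. Separatedness follows from uniqueness of stable-map limits together with the finiteness of $S_D$.

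The main obstacle is step (1) applied to the framed setting. Unlike the usual Hurwitz space, the base $\mathbb P^1$ is not quotiented by automorphisms, so stability must come from the degree-one map to $\mathbb P^1$ rather than from the marked points alone. I would check that every non-distinguished rational component of the expanded target contains at least three special points, namely nodes and markings; this is exactly stability of the map of class $[\mathbb P^1]$. I would also check that the distinguished component is stable because it is non-contracted. With stability established, finiteness of automorphisms, and hence the Deligne--Mumford property, follows from finiteness of automorphisms of stable maps and of $S_D$-torsors.
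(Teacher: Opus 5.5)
Your proposal is correct and follows the same route as the paper, which identifies $\overline{\mathcal{H}}_{g,\mathbb P^1,D}(\vec\mu)$ with Cavalieri's Abramovich--Corti--Vistoli stack of twisted admissible covers of a parametrized $\mathbb P^1$ and then cites the Abramovich--Vistoli properness theorem for twisted stable maps. Your realization as an open and closed substack of $\mathcal K_{0,n}(\mathbb P^1\times\mathcal BS_D,(1,0))$ spells out details the paper leaves implicit; one small correction is that in step (2) the relevant fact is that the rigidified inertia stack splits into open and closed pieces indexed by conjugacy classes, and properness of the evaluation maps plays no role.
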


\begin{proof}
By definition, Cavalieri's compactification
$$
\overline{\mathcal{H}}_{g,\mathbb P^1,D}(\vec\mu)
=
\overline{{\rm Adm}}\!\left(g \xrightarrow{D} \mathbb P^1,(\mu^1,\dots,\mu^r)\right)
$$
is the Abramovich--Corti--Vistoli stack of balanced twisted admissible covers with parametrized genus-zero target and ordered branch points. By the properness theorem for twisted stable maps of Abramovich--Vistoli \cite{AV02} (see also \cite[Theorem 2.1.7]{ACV03}), it is a proper Deligne--Mumford stack.
\end{proof}

\begin{lem}\label{lem_frame_toroidal_bdry}
The compactified framed Hurwitz stack
$\overline{\mathcal{H}}_{g,\mathbb P^1,D}(\vec{\mu})$
is a toroidal Deligne--Mumford stack.
\end{lem}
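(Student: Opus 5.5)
To prove Lemma \ref{lem_frame_toroidal_bdry}, I would show that $\overline{\mathcal{H}}_{g,\mathbb P^1,D}(\vec\mu)$ is smooth and that its boundary $\overline{\mathcal{H}}_{g,\mathbb P^1,D}(\vec\mu)\setminus\mathcal{H}_{g,\mathbb P^1,D}(\vec\mu)$ is a normal crossings divisor. I would do this by computing the local structure at a boundary point, following the method behind Remark \ref{rmk_toroidal}, that is, \cite[Section 3.1.1]{CMR16} and the Harris--Mumford/ACV deformation theory. The key observation is that the framing changes the target deformation theory only by replacing the moduli of marked rational curves with the moduli of expansions of a fixed $\mathbb P^1$. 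The latter is itself toroidal, being locally a Losev--Manin/Fulton--MacPherson-type space of genus-zero expansions.

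First step: fix a boundary point $[C_2\to C_1]$, where $C_1$ is an expansion of the parametrized $\mathbb P^1$ with a distinguished parametrized component $C_1^0$ and nodes $n_1,\dots,n_k$. The deformations of the target pair (expanded $\mathbb P^1$ with marked branch points) are unobstructed. Each node $n_j$ has a smoothing parameter $t_j$, and the boundary of the target space is locally $\{t_1\cdots t_k=0\}$. The remaining coordinates are the positions of branch points and the automorphisms of non-parametrized components modulo stabilization. I would justify this by noting that the space of expansions relative to a fixed $\mathbb P^1$ is smooth with simple normal crossings boundary. One way to see this is to view it as the fiber over a point of a morphism $\overline{M}_{0,n+1}\to$ (point), suitably rigidified; alternatively, one can cite Cavalieri's description \cite{Cavalieri06}.

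Second step: lift to the cover. Above each target node $n_j$ lie source nodes with ramification indices $e_{j,1},\dots,e_{j,m_j}$. By the local admissible cover equations $x_iy_i=s_i$ with $s_i^{e_{j,i}}=t_j$, the normalized (twisted) admissible cover stack is étale locally isomorphic to the smooth chart $\mathrm{Spec}\,\mathbb C[[\tau_1,\dots,\tau_k,\text{other coordinates}]]$, modulo a finite group, with $\tau_j^{\mathrm{lcm}_i(e_{j,i})}=t_j$. This is exactly the ACV twisted-curve normalization, and it is where smoothness comes from: the étale chart is smooth, and the boundary pulls back to $\{\tau_1\cdots\tau_k=0\}$. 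Smoothness of the stack is already provided by Lemma \ref{lem_frame_proper} together with \cite{ACV03}, so the new content is only the normal crossings statement.

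Third step: conclude. A smooth Deligne--Mumford stack whose boundary is étale locally a union of coordinate hyperplanes is a toroidal embedding, without self-intersection on an étale chart; this is the setting of \cite{Thuillier07,ACP15}.

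The main obstacle I expect is the first step. I need to verify carefully that keeping the parametrization on the distinguished component does not introduce non-reduced or non-transverse boundary behavior. In particular, I must check that when the parametrized component bubbles, the smoothing parameters are still independent coordinates and the framing imposes no relation among them. I would handle this by writing an explicit local chart: translations and scalings of the bubbles give the $\mathbb G_m$-worth of smoothing parameter at each node, and the parametrized component remains rigid. I would then compare this chart with the universal deformation of the expansion.
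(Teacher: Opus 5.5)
Your proof is correct. Like the paper's, it computes the local structure at a boundary point using admissible-cover deformation theory, but it is organized differently.

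The paper writes one monomial presentation $\mathbb C[[\xi_i,\eta_\ell,\xi_{i,j}]]/(\xi_{i,j}^{p(i,j)}-\xi_i)$ that keeps every source-node parameter. It calls this ``toric times smooth'' and reads off the boundary as $\{\xi_1\cdots\xi_k=0\}$, without ever treating the framed target on its own.

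You instead separate two inputs. The first is the target input: the space of expansions of the fixed $\mathbb P^1$ with its $r+s$ ordered branch points is smooth with normal crossings boundary. The second is the twisted chart, in which all source parameters over a target node are powers of a single $\tau_j$ with $\tau_j^{r_j}=t_j$ and $r_j=\mathrm{lcm}_i\,e_{j,i}$.

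Your route gives a stronger conclusion: smooth with normal crossings boundary, not merely toroidal. It is also the correct local model for the \emph{normalized} ACV stack. The paper's ring is the Harris--Mumford local ring, which is not normal and can even be reducible: two source nodes of index $2$ over one target node give $\xi_{i,1}^2=\xi_{i,2}^2$. So the ``affine toric'' description holds only after the normalization step that you make explicit. The paper's version is shorter, but that is its only advantage.

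Two repairs to your write-up:
\begin{enumerate}
\item Your justification of the target input, via ``the fiber over a point of $\overline{M}_{0,n+1}\to$ (point)'', does not make sense, and Losev--Manin is not the right model. Identify the target space with $\overline{M}_{0,r+s}(\mathbb P^1,1)$, equivalently the Fulton--MacPherson space $\mathbb P^1[r+s]$. It is a smooth variety with normal crossings boundary, since the genus is zero, the target is convex, and degree-one maps have no automorphisms.
\item Smoothness of the cover stack comes from the ACV construction \cite{ACV03}, because the stack is \'etale over a smooth stack of twisted targets. It does not come from Lemma~\ref{lem_frame_proper}, which only gives properness.
\end{enumerate}
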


\begin{proof}
Let $x \in \overline{\mathcal{H}}_{g,\mathbb P^1,D}(\vec{\mu})$ be a geometric point corresponding to a framed twisted admissible cover
$[\phi \colon \mathcal C \to \mathcal P]$
with semistable target $\mathcal P$, where the target $\mathbb P^1$ is parametrized and the branch
points are ordered.

We describe the completed local ring of $\overline{\mathcal{H}}_{g,\mathbb P^1,D}(\vec{\mu})$ at $x$.
Let $z_1,\dots,z_k$ be the nodes of the target $\mathcal P$. For each $i$, let $\widetilde z_{i,1},\dots,\widetilde z_{i,r(i)}$ be the nodes of $\mathcal C$ lying above $z_i$, with corresponding ramification indices
$p(i,1),\dots,p(i,r(i))$.
Let $\xi_i$ denote the smoothing parameter of the target node $z_i$, and let $\xi_{i,j}$ denote the smoothing parameter of the source node $\widetilde z_{i,j}$.
Let $\eta_1,\dots,\eta_m$ be local parameters along the stratum.

By the standard local deformation theory of admissible covers (see \cite[Section 4.2.2]{CMR16}), the completed local ring at $x$ is analytically isomorphic to
$$
\widehat{\mathcal O}_{\overline{\mathcal{H}}_{g,\mathbb P^1,D}(\vec{\mu}),x}
\cong
\mathbb C[[\xi_1,\dots,\xi_k,\eta_1,\dots,\eta_m,\xi_{i,j}]]
\Big/
\left(\xi_{i,j}^{\,p(i,j)}-\xi_i\right)_{i,j}.
$$
Here the variables $\xi_i$ record smoothing of the nodes of the target, the variables $\xi_{i,j}$
record smoothing of nodes of the source, and the $\eta_\ell$ are parameters tangent to the stratum. For each $i$, the source node parameters $\xi_{i,j}$ are constrained by the relations $\xi_{i,j}^{\,p(i,j)}=\xi_i,$
so they are integral over the submonoid generated by the target smoothing parameters $\xi_i$. In particular, they do not contribute additional independent boundary directions beyond those already coming from the nodes of the target.

These equations are monomial. In particular, \'etale locally the completed local ring is the completed
local ring of an affine toric variety times a smooth factor. Moreover, the boundary divisor is cut out
by the vanishing of the node-smoothing parameters $\xi_i$, equivalently by the union of the
coordinate hypersurfaces corresponding to the toric boundary. Thus the pair
$$
\left(\overline{\mathcal{H}}_{g,\mathbb P^1,D}(\vec{\mu}),
\overline{\mathcal{H}}_{g,\mathbb P^1,D}(\vec{\mu}) \setminus \mathcal{H}_{g,\mathbb P^1,D}(\vec{\mu})\right)
$$
is toroidal near $x$.

Since $x$ is arbitrary, $\overline{\mathcal{H}}_{g,\mathbb P^1,D}(\vec{\mu})$ is a toroidal Deligne--Mumford stack. This completes the proof.
\end{proof}

\begin{rmk}
By Lemmas \ref{lem_frame_proper} and \ref{lem_frame_toroidal_bdry}, the results of Abramovich--Caporaso--Payne \cite{ACP15}
apply to the framed compactification
$
\overline{\mathcal{H}}_{g,\mathbb P^1,D}(\vec{\mu}).
$
Hence its Berkovich analytification
$\overline{\mathcal{H}}^{\rm an}_{g,\mathbb P^1,D}(\vec{\mu})$
admits a canonical skeleton
$\overline{\Sigma}\!\left(\overline{\mathcal{H}}^{\rm an}_{g,\mathbb P^1,D}(\vec{\mu})\right)$,
which is an extended cone complex canonically determined by the toroidal boundary stratification.
\end{rmk}

\subsubsection{Framed tropical Hurwitz spaces}
We now define the tropical moduli space corresponding to framed Hurwitz covers, i.e., Hurwitz covers
with parametrized target $\mathbb P^1$.

\begin{defn}\label{def:trop-framed}
A \emph{framed $B$-marked tree} is a connected tree $T$ with legs labelled by $B$, together with a distinguished vertex $v\in V(T)$, such that every vertex other than $v$ is stable, i.e., has valence at least three. The distinguished vertex $v$ represents the parametrized component of the expanded target $\mathbb P^1$ and
is allowed to be unstable.

A \emph{framed $\vec{\mu}$-decorated tree} is a triple $(T,\delta,v)$, where $(T,v)$ is a framed $B$-marked stable tree, $\delta$ is a $\vec{\mu}$-decoration of $T$ in the sense of Glynn. 

Two framed $\vec{\mu}$-decorated trees $(T,\delta,v)$ and $(T,\delta',v)$ are said to be \emph{framed Hurwitz equivalent} if $\delta$ and $\delta'$ are Hurwitz equivalent in the sense of Glynn \cite[Section 4]{Glynn25}. Let 
$${\rm Stab}^{\mathrm{fr}}_{0,B}(\vec{\mu})$$ denote the set of framed Hurwitz equivalence classes of framed $\mu$-decorated trees.

For $\Theta=[(T,\delta,v)] \in {\rm Stab}^{\mathrm{fr}}_{0,B}(\vec{\mu})$, define
$$
\overline{\sigma}_\Theta := (\mathbb R_{\geq 0}\cup\{\infty\})^{E(T)}.
$$
If $\Theta'=[(T',\delta',v')]$ is obtained from $\Theta=[(T,\delta,v)]$ by contracting a collection of edges of $T$, with distinguished vertex $v'$ equal to the image of $v$ under the contraction, then there is a natural face morphism
$$
j_{\Theta',\Theta}\colon \overline{\sigma}_{\Theta'} \hookrightarrow \overline{\sigma}_\Theta.
$$
The \emph{framed tropical Hurwitz space} is the extended cone complex
$$
\overline{\mathcal{H}}^{\rm trop}_{\mathbb P^1}(\vec{\mu})
:=
\varinjlim_{\Theta \in {\rm Stab}^{\mathrm{fr}}_{0,B}(\mu)}
(\overline{\sigma}_\Theta, j_{\Theta',\Theta}).
$$
\end{defn}

We now prove the framed analogue of Theorem \ref{thm_CMR16}.

\begin{thm}\label{thm_framed_CMR16}
There is a commutative diagram
$$
\begin{tikzcd}
\overline{\mathcal{H}}^{\rm an}_{g,\mathbb P^1,D}(\vec{\mu})
  \arrow[r, "\mathcal R"]
  \arrow[d, "\rm trop_{\mathbb P^1}"']
&
\overline{\Sigma}\!\left(\overline{\mathcal{H}}^{\rm an}_{g,\mathbb P^1,D}(\vec{\mu})\right)
  \arrow[dl, "\rm trop_{\Sigma,\mathbb P^1}"]
\\
\overline{\mathcal{H}}_{\mathbb P^1}^{\rm trop}(\vec{\mu})
&
\end{tikzcd}
$$
where $\mathcal R$ is the Berkovich retraction onto the skeleton. Moreover,
${\rm trop}_{\Sigma,\mathbb P^1} \colon
\overline{\Sigma}\!\left(\overline{\mathcal H}^{\rm an}_{g,\mathbb P^1,D}(\vec{\mu})\right)
\to
\overline{\mathcal{H}}_{\mathbb P^1}^{\rm trop}(\vec{\mu})$
is an isomorphism of extended cone complexes.
\end{thm}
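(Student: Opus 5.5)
The plan is to follow the proof of Theorem \ref{thm_CMR16} and to track the one extra datum, the distinguished parametrized component, through each step. Concretely, we identify the boundary stratification of $\overline{\mathcal{H}}_{g,\mathbb P^1,D}(\vec\mu)$ combinatorially and check that the cones of the skeleton match the cones $\overline{\sigma}_\Theta$. By Lemmas \ref{lem_frame_proper} and \ref{lem_frame_toroidal_bdry}, the ACP construction \cite{ACP15} applies. It gives a skeleton $\overline{\Sigma}$ that is the colimit over boundary strata $S$ of extended cones $\overline{\sigma}_S=\mathrm{Hom}(M_S,\mathbb R_{\ge0}\cup\{\infty\})$, where $M_S$ is the monoid of effective Cartier divisors supported on the boundary near $S$. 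Face maps are induced by specialization of strata, modulo monodromy. It also gives the retraction $\mathcal R$ with $\mathrm{trop}\circ\mathcal R=\mathrm{trop}$ on $\overline{\mathcal H}^{\rm an}$. We define $\mathrm{trop}_{\mathbb P^1}$ as follows. A point of $\overline{\mathcal{H}}^{\rm an}_{g,\mathbb P^1,D}(\vec\mu)$ is represented by a cover over a valued field extension. By properness it extends to a twisted admissible cover over the valuation ring, whose special fiber lies in some stratum. We record the dual tree of the special target, with its distinguished vertex equal to the parametrized component, together with the Glynn decoration read off from monodromy and the edge lengths $\mathrm{val}(\xi_i)$. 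Commutativity of the diagram then amounts to $\mathrm{trop}_{\mathbb P^1}=\mathrm{trop}_{\Sigma,\mathbb P^1}\circ\mathcal R$, which holds because both sides depend only on the valuations of the monomial coordinates $\xi_i$.

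The first key step is to biject strata with ${\rm Stab}^{\mathrm{fr}}_{0,B}(\vec\mu)$. An irreducible boundary stratum of the framed stack is given by an expanded target together with a framed tree $(T,v)$ and an admissible cover over it. Its irreducible components correspond exactly to Hurwitz equivalence classes of decorations. The reason is that Glynn's argument \cite{Glynn25} identifying components with ${\rm Stab}_{0,B}(\vec\mu)$ is a statement about monodromy of the cover over the dual tree, and this monodromy is unaffected by remembering which target component is parametrized. We will transfer this by observing that the framed stratum maps to the unframed one, by forgetting the parametrization and stabilizing. This map is fibered in torsors under the reparametrization data, in particular $\mathrm{Aut}(\mathbb P^1)$ acting on the distinguished vertex, and those torsors are connected. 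Hence connected components correspond.

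The second step uses the local chart in the proof of Lemma \ref{lem_frame_toroidal_bdry}, namely the relations $\xi_{i,j}^{p(i,j)}=\xi_i$. We must show that the boundary monoid $M_S$ is freely generated by the target node parameters $\xi_1,\dots,\xi_k$, so that $\overline{\sigma}_S\cong(\mathbb R_{\ge0}\cup\{\infty\})^{E(T)}$ with coordinates the edge lengths of $T$. This requires that the normalization of the admissible cover space is smooth along the boundary with the $\xi_i$ as a normal crossings system, which follows from the ACV description as in \cite[Section 3.1.1]{CMR16}. We then check compatibility of faces. Generizing a stratum corresponds to contracting edges of $T$, and the distinguished vertex maps to its image, since the parametrized component persists. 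The resulting face inclusion of cones is precisely $j_{\Theta',\Theta}$. Together with the bijection on strata, this gives a morphism of colimit diagrams. It is bijective on cones and an isomorphism on each cone, hence an isomorphism of extended cone complexes.

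The main obstacle is the self-gluing and monodromy issue. We must ensure that no cone of the skeleton is quotiented by a nontrivial automorphism permuting edges, and that no two strata are glued differently than in the tropical colimit. In the unframed case this is exactly what Glynn's decorations handle. In the framed case, I would argue that the distinguished vertex and the labelled legs rigidify the tree, so that its automorphism group fixing legs and $v$ is trivial. Monodromy around the boundary divisors then only acts on decorations, and it does so through braid moves and conjugations, which are already quotiented out. I would first attempt to verify this by restricting Glynn's monodromy computation along a small loop around each divisor $\xi_i=0$.
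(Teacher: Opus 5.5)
Your proposal is correct and follows essentially the same route as the paper. ACP applies by Lemmas \ref{lem_frame_proper} and \ref{lem_frame_toroidal_bdry}; $\mathrm{trop}_{\mathbb P^1}$ is defined by valuations of the target node-smoothing parameters and factors through $\mathcal R$ because of the monomial local equations $\xi_{i,j}^{p(i,j)}=\xi_i$; and cones and face maps are matched by indexing strata with framed types $(T,\delta,v)$ and contracting edges while tracking $v$. Your transfer of Glynn's classification of components through the frame-forgetting map with connected fibers, and your check that the labelled legs rule out self-gluing, supply justification that the paper only asserts. The one imprecision, which the paper shares, is that on the normalized twisted stack the boundary monoid is freely generated by roots $t_i$ with $t_i^{r_i}=\xi_i$, where $r_i=\operatorname{lcm}_j p(i,j)$, rather than by the $\xi_i$ themselves; this changes the integral structure but not the real extended cone $(\mathbb R_{\ge0}\cup\{\infty\})^{E(T)}$.
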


\begin{proof}
By Lemmas \ref{lem_frame_proper} and \ref{lem_frame_toroidal_bdry}, the stack
$\overline{\mathcal{H}}_{g,\mathbb P^1,D}(\vec{\mu})$
is proper and toroidal. Therefore, by \cite{ACP15}, its analytification admits a canonical skeleton.

We first define the tropicalization map. Let
$x \in \overline{\mathcal{H}}^{\rm an}_{g,\mathbb P^1,D}(\vec{\mu})$
be an analytic point represented by a family of framed admissible covers over a valuation ring.
Its special fiber determines a boundary stratum of the compactification, hence a decorated dual graph
of the degeneration. By Definition \ref{def:trop-framed}, this framed decorated dual graph
determines a point in the cone of
$\overline{\mathcal{H}}^{\rm trop}_{\mathbb{P}^1}(\vec{\mu})$ indexed by the framed class $[T,\delta,v]$.
Taking the valuations of the node-smoothing parameters defines a continuous tropicalization map
${\rm trop}_{\mathbb P^1} \colon
\overline{\mathcal{H}}^{\rm an}_{g,\mathbb P^1,D}(\vec{\mu}) \to \overline{\mathcal{H}}_{\mathbb P^1}^{\rm trop}(\vec{\mu})$.

Next we show that ${\rm trop}_{\mathbb P^1}$ factors through the skeleton. By Lemma \ref{lem_frame_toroidal_bdry}, \'etale locally
near a boundary point the completed local ring is given by monomial admissible cover equations
$$
\xi_{i,j}^{\,p(i,j)}=\xi_i.
$$
Hence the characteristic monoid of the logarithmic structure is generated by the smoothing parameters
of the nodes of the target, and the associated cone of the skeleton is exactly the cone of valuations
of these smoothing parameters. It follows that $\rm trop_{\mathbb P^1}$ depends only on the corresponding
point of the skeleton, so it factors through the retraction
$$
\mathcal R \colon
\overline{\mathcal{H}}^{\rm an}_{g,\mathbb P^1,D}(\vec{\mu})
\to
\overline{\Sigma}\!\left(\overline{\mathcal{H}}^{\rm an}_{g,\mathbb P^1,D}(\vec{\mu})\right).
$$
This yields a map
$$
{\rm trop}_{\Sigma,\mathbb P^1} \colon
\overline{\Sigma}\!\left(\overline{\mathcal{H}}^{\rm an}_{g,\mathbb P^1,D}(\vec{\mu})\right)
\to
\overline{\mathcal{H}}_{\mathbb P^1}^{\rm trop}(\vec{\mu}).
$$

Finally, we identify the two extended cone complexes. A toroidal boundary stratum of
$\overline{\mathcal{H}}_{g,\mathbb P^1,D}(\vec{\mu})$ corresponds to a framed admissible cover degeneration. Its target
dual tree is a $B$-marked stable tree $T$, the admissible cover determines a $\vec{\mu}$-decoration
$\delta$ of $T$ in the sense of Glynn, and the distinguished parametrized component of the target
determines a distinguished internal vertex $v$ of $T$. Thus boundary strata are indexed by the
same framed combinatorial types $(T,\delta,v)$ that index the cones of
$H^{\rm trop}_{\mathbb P^1}(\vec{\mu})$. Under specialization, nodes are smoothed or created exactly by
contracting edges of $T$, and the distinguished vertex is sent to its image under contraction.
Hence the face morphisms agree on both sides. Therefore
$\overline{\Sigma}(\overline{\mathcal{H}}^{\rm an}_{g,\mathbb P^1,D}(\vec{\mu}))\cong \overline{\mathcal{H}}^{\rm trop}_{\mathbb P^1}(\vec{\mu})$.
\end{proof}

\section{Rigidified framed Hurwitz spaces}\label{sec_rfhur}
In this section, we introduce the rigidified framed Hurwitz space in Section 3.1. We show that it admits a proper toroidal compactification in Section 3.2. Finally, in Section 3.3, we define the rigidified framed tropical Hurwitz space and prove the rigidified framed version of Theorem \ref{thm_CMR16}.

Throughout this section, we take the source curve to be genus-$0$ and set $\Vec{\mu}^*:=((D),\mu^2,\ldots,\mu^r)$.

\subsection{Rigidified framed Hurwitz spaces and spaces of polynomials}
We further {\it rigidify} the framed Hurwitz space $\mathcal{H}_{0,\mathbb{P}^1,D}(\Vec{\mu}^*)$ by fixing in the source curve the point $x_\infty$, a tangent vector $v_\infty$ at $x_\infty$ and a point $x_0 \neq x_\infty$.
Then an automorphism of the source $\mathbb{P}^1$ fixing $x_\infty$, $v_\infty$ and a point $x_0 \neq x_\infty$
must be the identity. Indeed, choose an affine coordinate $z$ on $\mathbb{P}^1$ such that $x_\infty=\infty$. Then any automorphism of $\mathbb{P}^1$ fixing $\infty$ is of the form $z \mapsto az+b$.
Fixing the tangent vector at $\infty$ forces $a=1$, and fixing $x_0$ forces $b=0$.

We denote by the rigidified framed Hurwitz space by $\mathcal{H}_{{\rm rig},0,\mathbb{P}^1,D}(\Vec{\mu}^*)$. 

\begin{rmk} \label{rmk_polyD}
By the above, $\mathcal{H}_{{\rm rig},0,\mathbb{P}^1,D}(\Vec{\mu}^*)$ is the space of degree $D$ polynomials with ramification profile $\Vec{\mu}^*$, after choosing the affine coordinates determined by the rigidifying data.
\end{rmk}

\subsection{Twisted admissible cover compactification} \label{sec_3.2_rfhur}
We denote by $\overline{\mathcal{H}}_{{\rm rig},0,\mathbb{P}^1,D}(\vec{\mu}^*)$ the twisted admissible cover compactification of the rigidified framed Hurwitz space $\mathcal{H}_{{\rm rig},0,\mathbb{P}^1,D}(\Vec{\mu}^*)$.

To construct the space $\overline{\mathcal{H}}_{{\rm rig},0,\mathbb{P}^1,D}(\vec{\mu}^*)$, 
we use the short-hand notation $$X:=\overline{\mathcal{H}}_{0,\mathbb{P}^1,D}(\vec{\mu}^*).$$
For a framed admissible cover in $X$, we denote by the {\it distinguished source component} the irreducible component of the source curve containing the
marked point $x_\infty$ lying over the totally ramified branch point at infinity.
Define $X_{x_0}$ to be the stable compactification obtained by allowing the extra marked point
$x_0$ to specialize to the special points and then stabilizing the source curve.

\begin{lem}\label{lem_Xx0_prop}
The stack $X_{x_0}$ has toroidal boundary and is proper over $X$.
\end{lem}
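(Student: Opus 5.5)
The plan is to identify $X_{x_0}$ with the universal curve over $X$ (the universal source curve, suitably stabilized), so that both properness over $X$ and toroidality follow from standard facts about universal curves of pointed nodal curves. Let $\pi\colon \mathcal C\to X$ be the universal source curve of the twisted admissible covers parametrized by $X=\overline{\mathcal{H}}_{0,\mathbb P^1,D}(\vec\mu^*)$. This exists as a flat proper family of twisted nodal genus-$0$ curves, with marked sections given by the ramification points over the ordered branch points; in particular $x_\infty$ is the section over the totally ramified point.

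Adding a further marked point $x_0$ and stabilizing is the usual Knudsen construction. A point of $X_{x_0}$ over $[\phi]\in X$ is a point $y$ of the coarse source curve $C$. If $y$ is a smooth unmarked point, nothing changes. If $y$ is a marked point or a node, we bubble off a new rational component carrying $x_0$ together with that special point or the two branches of the node. We then make the cover admissible again by inserting the corresponding target bubble. Over that bubble the cover is trivial, or totally ramified where needed, so no new branching data appear.

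With this description, the argument proceeds in three steps.
\begin{enumerate}
\item \emph{The forgetful map.} The map $X_{x_0}\to X$ forgets $x_0$ and contracts the unstable bubble together with its target bubble. By Knudsen's contraction-stabilization correspondence, $X_{x_0}$ is isomorphic to the coarse universal curve $C\to X$.
\item \emph{Properness.} Since $\pi$ is proper and flat, $X_{x_0}\to X$ is representable and proper. Alternatively, properness can be checked by the valuative criterion: a family of points $x_0(t)$ on the generic fiber extends after base change, because the fibers are proper curves, and stabilization of the limit is unique.
\item \emph{Toroidal boundary.} This is local. At a point of $X_{x_0}$ where $x_0$ lies in the smooth locus of the fiber, $X_{x_0}\to X$ is smooth, so the toroidal chart of Lemma~\ref{lem_frame_toroidal_bdry} pulls back and one gains one smooth parameter. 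When $x_0$ sits at a marked point, the new bubble is smoothed by one new parameter $\xi'$, and the new boundary divisor is $\{\xi'=0\}$, again a product with a smooth direction.
\end{enumerate}

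The main obstacle is the case where $x_0$ lies at a source node $\widetilde z_{i,j}$ over the target node $z_i$. The universal curve is locally $uv=\xi_{i,j}$ there, and the stabilization must be made compatible with the admissible-cover equations $\xi_{i,j}^{p(i,j)}=\xi_i$. My plan is to introduce coordinates $u,v$ with $uv=\xi_{i,j}$. Over the blown-up target node the other source nodes above $z_i$ then acquire induced equations of the form $\xi_{i,j'}^{p(i,j')}=\xi_i$, with $\xi_i=(uv)^{p(i,j)}$.

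The completed local ring is then a quotient of a power series ring by monomial relations, namely binomials of the form monomial $=$ monomial. It is therefore the completion of a normal affine toric variety times a smooth factor. Normality holds because this is the fiber product of saturated monoids along a saturated map, after normalization as in the definition of $\overline{\mathcal H}$.

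The boundary of $X_{x_0}$ is the preimage of $\partial X$ together with the new divisor where $x_0$ hits a special point. In these coordinates it is the union of the vanishing loci of the monomial generators $u$, $v$, $\xi_{i'}$, hence is the toric boundary. I expect the real work to be checking saturation of the resulting monoid and that no extra components appear. I would handle this by passing to the normalization, as the paper does for $\overline{\mathcal H}$, and noting that normalization preserves the toroidal structure.
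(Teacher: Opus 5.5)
Your proposal is correct and follows essentially the same route as the paper. The paper proves properness by extending $x_0$ through a stable limit (bubbling when it hits a node or marked point), and proves toroidality by noting that $X_{x_0}$ behaves locally like the universal marked curve over the toroidal base $X$, adjoining either a smooth coordinate or one extra monomial coordinate. Your global identification of $X_{x_0}$ with the universal source curve, and your node chart $uv=\xi_{i,j}$, make this more precise. There saturation comes from base change of the saturated map $\mathbb N\to\mathbb N^2$, $1\mapsto(1,1)$, over the normalized chart of $X$, which makes the paper's vague ``at most one additional monomial coordinate'' precise. You should drop the ``inserted target bubble'' description: such target components would have only two special points and so be unstable. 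It is not needed, since the whole argument runs on the universal curve.
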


\begin{proof}
We first prove properness. Let $R$ be a valuation ring with fraction field $K$, and suppose we are
given a commutative diagram
$$
\begin{tikzcd}
{\rm Spec} K \arrow[r] \arrow[d] & X_{x_0} \arrow[d] \\
{\rm Spec} R \arrow[r] & X.
\end{tikzcd}
$$
Since $X$ is proper by Lemma \ref{lem_frame_proper}, after a finite base change the corresponding family of framed admissible covers
over ${\rm Spec} K$ extends to a family over ${\rm Spec} R$. Over the generic fiber, we also have the extra
marked point $x_0$ on the distinguished source component. If its limit remains in the smooth locus
away from the special points, then it extends directly. If its limit meets a node or an existing marked
point, one inserts a rational bubble carrying the marked point $x_0$, and then stabilizes. This is the usual stable marked curve compactification procedure, and it produces a unique stable
limit. Hence $X_{x_0}\to X$ satisfies the valuative criterion for properness.

We next prove toroidality. Let $y\in X_{x_0}$ be a boundary point, and let $x\in X$ be its image.
Choose an \'etale neighborhood $U\to X$ of $x$ with toroidal coordinates $\xi_1,\dots,\xi_r,u_1,\dots,u_m$
such that the boundary of $X$ is given by
$$
\xi_1\cdots \xi_r=0.
$$
Passing from $X$ to $X_{x_0}$ amounts to adding one extra marked point on the distinguished
source component and taking the relative stable marked curve compactification. Locally, this behaves
as in the universal semistable marked curve construction over a toroidal base. Namely, if $x_0$ stays in the
smooth locus away from the special points, one simply adjoins an ordinary smooth coordinate $t$;
if $x_0$ specializes to the special locus, one obtains in addition one boundary parameter
corresponding to the bubbling or specialization of $x_0$.

Thus, \'etale locally, the completed local ring of $X_{x_0}$ is obtained from that of $X$ by adjoining
ordinary smooth variables and, in the boundary case, at most one additional monomial coordinate.
Therefore the boundary divisor of $X_{x_0}$ is again locally given by coordinate hyperplanes in a
toroidal chart. Hence $X_{x_0}$ is toroidal.
\end{proof}

Let $L$ be the tangent line bundle at $x_\infty$ on the universal distinguished component over $X_{x_0}$. Then $L^\times:= L \setminus \{0\}$ 
is the moduli stack of non-zero tangent vectors, as  choosing a non-zero tangent vector is equivalent to choosing a point of the complement of the zero section.

\begin{lem}
$L^\times$ has toroidal boundary but is in general not proper.
\end{lem}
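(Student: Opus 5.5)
Two things have to be shown: that $L^\times$ is toroidal relative to a natural boundary, and that it fails properness in general. The plan is to treat $L^\times \to X_{x_0}$ as a $\mathbb G_m$-torsor and pull back the toroidal structure from Lemma \ref{lem_Xx0_prop}.

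\textbf{Toroidality.} The starting point is that $L$ is a line bundle on $X_{x_0}$. The marked point $x_\infty$ is a section of the universal curve that stays in the smooth locus of the distinguished source component, so the relative tangent space at $x_\infty$ is locally free of rank one. Hence $L^\times \to X_{x_0}$ is a $\mathbb G_m$-torsor, and in particular smooth. We take the boundary of $L^\times$ to be the preimage of the boundary of $X_{x_0}$. We then work in an étale chart $U \to X_{x_0}$ on which $L$ is trivialized. There we have $L^\times|_U \cong U \times \mathbb G_m$, with the boundary equal to $\partial U \times \mathbb G_m$. By Lemma \ref{lem_Xx0_prop}, $U$ has toroidal coordinates $\xi_1,\dots,\xi_r,u_1,\dots,u_m,\ldots$ with boundary cut out by monomials in the $\xi_i$. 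Adjoining the fiber coordinate $w$ on $\mathbb G_m$ gives a chart of the form (toric chart)$\times$(smooth factor)$\times\mathbb G_m$, with the same monomial boundary equations. Thus $(L^\times, \partial L^\times)$ is toroidal, and its characteristic monoids, and hence its cones, agree with those of $X_{x_0}$.

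\textbf{Non-properness.} The fibers of $L^\times \to X_{x_0}$ are copies of $\mathbb G_m$, which is not proper. Since $X_{x_0}$ is nonempty, the valuative criterion fails. Concretely, take a point $y \in X_{x_0}$ and the family $w = t$ over $\operatorname{Spec}\mathbb C((t))$ in the fiber over $y$. Its limit as $t \to 0$ is the zero vector, which lies in $L \setminus L^\times$. So this map does not extend to $\operatorname{Spec}\mathbb C[[t]]$, and the same holds after any finite base change. Therefore $L^\times$ is not proper. The same argument with $w = t^{-1}$ shows the vector can also escape to infinity.

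\textbf{Main obstacle.} The delicate point is justifying that $L$ is genuinely a line bundle near boundary points where the distinguished component degenerates. One has to check that the stabilization in the construction of $X_{x_0}$ never contracts the component containing $x_\infty$, or at least that the section $x_\infty$ always remains in the smooth locus, so that $x_\infty^*T_{\mathcal C/X_{x_0}}$ is invertible. The plan is to argue that $x_\infty$ lies over the marked branch point $\infty$ with full ramification $D$, so it is a marked point of the twisted curve and is never a node. The relative tangent sheaf at a marked smooth section is then invertible, possibly after passing to the coarse curve, because of the twisted structure at $x_\infty$. The "in general" in the statement is covered by the fiber argument above; equality of the skeleta of $L^\times$ and $X_{x_0}$ is not needed here.
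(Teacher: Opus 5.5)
Your proposal is correct and follows the paper's proof. Like the paper, you trivialize $L$ locally so that $L^\times \cong U\times\mathbf G_m$ with boundary pulled back from the toroidal $X_{x_0}$, and you get non-properness from the non-proper $\mathbf G_m$-fibers. Your extra checks fill in details the paper leaves implicit: that $x_\infty$ stays in the smooth locus so $L$ is genuinely a line bundle, that the trivialization is étale rather than Zariski on a stack, and the explicit failure of the valuative criterion via $w=t$.
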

\begin{proof}
Since line bundles are Zariski locally trivial, \'etale locally on $X_{x_0}$, we have $L^\times \cong X_{x_0}\times \mathbf G_m$ where $\mathbf G_m$ is a torus. Therefore $L^\times$ is toroidal. However, $L^\times$ is in general not
proper over $X_{x_0}$ as the fiber $\mathbf G_m$ is not proper. 
\end{proof}

\begin{lem}\label{lem_fiber_comp}
Any toric compactification of the $\mathbf G_m$-fiber yields a proper compactification whose boundary is toroidal. 
\end{lem}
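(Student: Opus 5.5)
The plan is to compactify $L^\times$ fibrewise. Since $L^\times$ is the complement of the zero section of the line bundle $L$ over $X_{x_0}$, the natural toric compactification of each $\mathbf G_m$-fiber is $\mathbb P^1$. Globally this is the projective bundle
$$\overline{L}:=\mathbb P(L\oplus\mathcal O_{X_{x_0}})\longrightarrow X_{x_0},$$
which adds the zero section $D_0$ and the section at infinity $D_\infty$. Any other toric compactification of $\mathbf G_m$ is a complete one-dimensional toric variety. It is therefore determined by the complete fan in $\mathbb R$, namely the two rays $\mathbb R_{\ge 0}$ and $\mathbb R_{\le 0}$. So it is $\mathbb P^1$ with its two torus-fixed points, and the construction does not depend on any choice. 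Because $\mathbf G_m$ acts on $L^\times$ by scaling, the toric structure of the fiber glues along the transition functions of $L$, so this gives a global construction and not only a local one.

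Properness is formal. By Lemma \ref{lem_frame_proper}, $X$ is proper. By Lemma \ref{lem_Xx0_prop}, $X_{x_0}\to X$ is proper. The map $\overline{L}\to X_{x_0}$ is a $\mathbb P^1$-bundle, hence projective and in particular proper. A composition of proper morphisms is proper, so $\overline{L}$ is a proper Deligne--Mumford stack.

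For toroidality, take the boundary to be
$$\partial\overline{L}=\pi^{-1}(\partial X_{x_0})\cup D_0\cup D_\infty .$$
Work \'etale locally at a point $y\in X_{x_0}$. By Lemma \ref{lem_Xx0_prop} there is a chart $U\to V_\sigma\times\mathbb A^m$ with $V_\sigma$ an affine toric variety carrying the boundary. Shrinking $U$, the bundle $L$ is trivial on $U$, so $\overline{L}|_U\cong U\times\mathbb P^1$. Near $D_0$ we use the fiber coordinate $t$, and near $D_\infty$ we use $t^{-1}$. Locally the pair is then
$$V_\sigma\times\mathbb A^1\times\mathbb A^m\cong V_{\sigma\times\mathbb R_{\ge0}}\times\mathbb A^m ,$$
with boundary the toric boundary of $V_\sigma\times\mathbb A^1$. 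Away from $D_0\cup D_\infty$ it is $V_\sigma\times\mathbf G_m\times\mathbb A^m$. Both are toroidal charts, so $(\overline{L},\partial\overline{L})$ is toroidal.

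The main point to check is that these local charts are compatible. A change of trivialization multiplies $t$ by a unit $g$ on $U$, and multiplication by a unit preserves the monoid generated by $t$ and the boundary monomials of $V_\sigma$. The characteristic monoids, and hence the toroidal structure, are therefore well defined independently of the trivialization.

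A secondary subtlety is stackiness: $L$ may carry nontrivial stabilizer actions at twisted points. I would handle this by checking toroidality on an \'etale atlas. There the scaling action commutes with the automorphisms, so $D_0$ and $D_\infty$ are preserved and the chart description above descends.

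In the resulting cone complex, each cone $\sigma$ of $\Sigma(X_{x_0})$ is replaced by $\sigma\times\mathbb R_{\ge0}$ and $\sigma\times\mathbb R_{\le0}$. These extra rays are the rigidifying tangent-vector directions.
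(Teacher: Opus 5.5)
Your proposal is correct and follows essentially the same route as the paper: compactify $L^\times$ by the $\mathbb P^1$-bundle $\mathbb P(\mathcal O\oplus L)\to X_{x_0}$, deduce properness from the properness of $X$, of $X_{x_0}\to X$, and of the bundle projection, and deduce toroidality from the local product description $U\times\mathbb P^1$ with boundary $\pi^{-1}(\partial X_{x_0})\cup D_0\cup D_\infty$. Your extra points fill in details the paper leaves implicit. One is that the only complete fan in $\mathbb R$ forces the fiber compactification to be $\mathbb P^1$, which justifies the word ``any'' in the statement. The other is the check that changes of trivialization preserve the characteristic monoids.
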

\begin{proof}
The $\mathbb P^1$-bundle $\mathbb P(\mathcal O\oplus L)\to X_{x_0}$ is a toric compactification of $L^\times$.
\'Etale locally where $L$ is trivial, the $\mathbb P^1$-bundle is the product $X_{x_0}\times \mathbb P^1$ and we have $L^\times \cong X_{x_0}\times \mathbf G_m \subset X_{x_0}\times \mathbb P^1$.

The added boundary is the union of the two relative toric divisors $X_{x_0}\times \{0\}$ and $X_{x_0}\times \{\infty\}$.
Thus, \'etale locally, the total boundary is the union of the original toroidal boundary $\{\xi_1\cdots \xi_r=0\}$
together with the coordinate hyperplanes coming from the toric boundary of
$\mathbb P^1$. Hence $\mathbb P(\mathcal O\oplus L)$ has toroidal boundary. Finally, $\mathbb P(\mathcal O\oplus L)$ is proper as the projective bundle is a proper map and $X_{x_0}$ is proper over $X$ by Lemma \ref{lem_Xx0_prop}.
\end{proof}

\begin{rmk}
We denote by $\overline{\mathcal{H}}_{{\rm rig},0,\mathbb{P}^1,D}(\vec{\mu}^*)$ a chosen proper toroidal compactification as in Lemma \ref{lem_fiber_comp}. Thus the compactification is not canonically determined and it depends on the choice of toric compactification of the $\mathbf G_m$-fiber of $L^\times\to X_{x_0}$. We choose the $\mathbb{P}^1$-bundle compactification. Then again, Abramovich–Caporaso–Payne \cite{ACP15} gives the skeleton of $\overline{\mathcal{H}}_{{\rm rig},0,\mathbb{P}^1,D}(\vec{\mu}^*)$ which lives in the Berkovich analytification $\overline{\mathcal{H}}^{\rm an}_{{\rm rig},0,\mathbb{P}^1,D}(\vec{\mu}^*)$.
\end{rmk}

\subsection{Rigidified framed tropical Hurwitz spaces}

\begin{defn}\label{def:rigidified_tropical_space}
A \emph{rigidified framed $\vec\mu^*$-decorated type} is a tuple
$$
\Xi=(T,\delta,v,\Gamma_{\rm src}^+,\ell_0,\epsilon,\rho),
$$
where:
\begin{itemize}
\item $(T,\delta,v)$ is a framed $\vec\mu^*$-decorated tree in the sense of Definition \ref{def:trop-framed};

\item $\Gamma_{\rm src}\to T$ is the source side combinatorial admissible cover associated to the framed Hurwitz equivalence class of $(T,\delta,v)$.

    \item $\Gamma_{\rm src}^+$ is the stabilized source graph obtained from $\Gamma_{\rm src}$ after adding an additional marked leg $\ell_0$, representing the marked source point $x_0$.

\item $\epsilon\in\{0,1\}$ records whether the stabilization of $x_0$ contributes an additional boundary modulus:
\begin{itemize}
\item $\epsilon=0$ if $x_0$ lies in the smooth locus away from the special points,
\item $\epsilon=1$ if $x_0$ specializes to the special locus and contributes an additional boundary parameter of Lemma \ref{lem_Xx0_prop}.
\end{itemize}

\item $\rho$ is a cone in the fan of the chosen toric compactification of the $\mathbf G_m$-bundle of non-zero tangent vectors at $x_\infty$. For the compactification
$\mathbb P(\mathcal O\oplus L)\to X_{x_0}$,
this is the fan of $\mathbb P^1$. Thus $\rho$ records whether the tangent vector remains in the open torus or specializes to one of the two toric boundary divisors.
\end{itemize}

Two rigidified framed $\vec\mu^*$-decorated types
$$
\Xi=(T,\delta,v,\Gamma_{\rm src}^+,\ell_0,\epsilon,\rho)
\qquad \text{and}\qquad
\Xi'=(T',\delta',v',(\Gamma_{\rm src}')^+,\ell_0',\epsilon',\rho')
$$
are \emph{rigidified Hurwitz equivalent} if $(T,\delta,v)$ and $(T',\delta',v')$ are framed Hurwitz equivalent, the induced identification of the stabilized source side combinatorial admissible covers sends $\ell_0$ to $\ell_0'$, $\epsilon=\epsilon'$, and
$\rho=\rho'$.

Let
$
{\rm Stab}^{\rm rig,fr}_{0,B}(\vec\mu^*)
$
denote the set of rigidified Hurwitz equivalence classes of rigidified framed $\vec\mu^*$-decorated types.
For
$
\Xi\in {\rm Stab}^{\rm rig,fr}_{0,B}(\vec\mu^*),
$
define the associated extended cone by
$$
\overline{\sigma}^{\rm rig}_\Xi
:=
(\mathbb R_{\geq 0}\cup\{\infty\})^{E(T)}
\times
(\mathbb R_{\geq 0}\cup\{\infty\})^{\epsilon}
\times
\overline{\rho},
$$
where
the first factor
records the smoothing parameters of the nodes of the framed target, the second factor
records the additional boundary parameter introduced by the stabilization of the marked source point $x_0$, and $\overline{\rho} :=\mathbb R_{\ge 0}\cup\{\infty\}$
denotes the extended one-dimensional cone associated to the chosen toric
compactification of the $\mathbf G_m$-fiber of the nonzero tangent vector
bundle at $x_\infty$.

If $\Xi'$ is obtained from $\Xi$ by a specialization of rigidified framed decorated types, namely by contracting edges of $T$, sending the distinguished vertex $v$ to its image, stabilizing the source graph with the marked leg $\ell_0$, and specializing the tangent vector cone $\rho$, then there is a natural face morphism
$$
j^{\rm rig}_{\Xi',\Xi} \colon 
\overline{\sigma}^{\rm rig}_{\Xi'}
\hookrightarrow
\overline{\sigma}^{\rm rig}_{\Xi}.
$$

The \emph{rigidified framed tropical Hurwitz space} is the extended cone complex
$$
\overline{\mathcal H}^{\rm trop}_{\rm rig,0,\mathbb P^1,D}(\vec\mu^*)
:=
\varinjlim_{\Xi\in {\rm Stab}^{\rm rig,fr}_{0,B}(\vec\mu^*)}
\left(
\overline{\sigma}^{\rm rig}_\Xi,
j^{\rm rig}_{\Xi',\Xi}
\right).
$$
\end{defn}

We have a rigidified framed version of Theorem \ref{thm_CMR16}.
\begin{thm} \label{thm_rig_framed_CMR16}
There is a commutative diagram
$$
\begin{tikzcd}
\overline{\mathcal{H}}^{{\rm an}}_{\mathrm{rig},0,\mathbb P^1,D}(\vec\mu^*)
  \arrow[r,"\mathcal R"]
  \arrow[d,"{\rm trop}_{\mathrm{rig}}"']
&
\overline{\Sigma}\!\left(\overline{\mathcal{H}}^{{\rm an}}_{\mathrm{rig},0,\mathbb P^1,D}(\vec\mu^*)\right)
  \arrow[dl,"{\rm trop}_{\Sigma,\mathrm{rig}}"]
\\
\overline{\mathcal{H}}^{{\rm trop}}_{\mathrm{rig},0,\mathbb P^1,D}(\vec\mu^*),
&
\end{tikzcd}
$$
where $\mathcal R$ is the Berkovich retraction onto the skeleton. Moreover,
$
{\rm trop}_{\Sigma,\mathrm{rig}} \colon
\overline{\Sigma}\!\left(\overline{\mathcal{H}}^{{\rm an}}_{\mathrm{rig},0,\mathbb P^1,D}(\vec\mu^*)\right)
\to
\overline{\mathcal{H}}^{{\rm trop}}_{\mathrm{rig},0,\mathbb P^1,D}(\vec\mu^*)
$
is an isomorphism of extended cone complexes.
\end{thm}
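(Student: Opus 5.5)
The plan is to follow the proof of Theorem \ref{thm_framed_CMR16} closely, building the rigidified compactification in the two stages used in Section \ref{sec_3.2_rfhur}: first $X \leftarrow X_{x_0}$, then $\mathbb P(\mathcal O\oplus L)\to X_{x_0}$. At each stage we track how the toroidal charts, and hence the cones of the skeleton, change. By Lemmas \ref{lem_Xx0_prop} and \ref{lem_fiber_comp}, the space $\overline{\mathcal H}_{\mathrm{rig},0,\mathbb P^1,D}(\vec\mu^*)$ is proper and toroidal. So \cite{ACP15} provides the skeleton $\overline\Sigma$ and the retraction $\mathcal R$. Moreover, the cones of $\overline\Sigma$ are in bijection with the toroidal boundary strata, and each cone is the dual of the characteristic monoid at a generic point of its stratum.

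\textbf{Step 1: define ${\rm trop}_{\rm rig}$.} Let $x$ be an analytic point, represented after finite base change by a family over a valuation ring. Its special fiber determines the following data. The framed target type $(T,\delta,v)$ comes from the image in $X$. The stabilized source graph $\Gamma^+_{\rm src}$, together with the leg $\ell_0$ and the flag $\epsilon$, comes from the image in $X_{x_0}$. The cone $\rho$ of the fan of $\mathbb P^1$ records where the tangent vector specializes. The coordinates of the point are then:
\begin{itemize}
\item the valuations of the target smoothing parameters $\xi_i$;
\item if $\epsilon=1$, the valuation of the extra bubbling parameter from Lemma \ref{lem_Xx0_prop};
\item the valuation of the fiber coordinate of $\mathbf G_m\subset\mathbb P^1$, taken with sign according to which of the two toric divisors is approached.
\end{itemize}
These coordinates give a point of $\overline\sigma^{\rm rig}_\Xi$. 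Continuity follows, as in the framed case, from the valuations being continuous functions on the analytification.

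\textbf{Step 2: factorization through $\mathcal R$.} We show that the characteristic monoid at a point of the stratum of type $\Xi$ is
$$\mathbb N^{E(T)}\oplus\mathbb N^{\epsilon}\oplus(\rho^\vee\cap\mathbb Z).$$
The first summand follows from Lemma \ref{lem_frame_toroidal_bdry}, since the source node parameters $\xi_{i,j}$ are integral over the $\xi_i$. The second follows from the local description in Lemma \ref{lem_Xx0_prop}: one extra monomial coordinate, adjoined freely. The third follows from the local product structure $X_{x_0}\times\mathbb P^1$ in Lemma \ref{lem_fiber_comp}. Since ${\rm trop}_{\rm rig}$ reads off exactly the valuations of the generators of this monoid, it is constant on the fibers of $\mathcal R$, and so it induces ${\rm trop}_{\Sigma,\mathrm{rig}}$. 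Restricted to each cone, this map is the identity in these monomial coordinates, hence an isomorphism of extended cones.

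\textbf{Step 3: global gluing.} We must show that boundary strata correspond bijectively to ${\rm Stab}^{\rm rig,fr}_{0,B}(\vec\mu^*)$, and that specialization of strata matches the face maps $j^{\rm rig}_{\Xi',\Xi}$.
\begin{itemize}
\item Theorem \ref{thm_framed_CMR16} gives this correspondence on $X$.
\item Over a given stratum of $X$, the strata of $X_{x_0}$ are indexed by the position of $x_0$ on the stabilized source, that is, by the pair $(\ell_0,\epsilon)$ up to automorphisms of the source admissible cover. This is precisely the data fixed by rigidified Hurwitz equivalence.
\item The strata of the $\mathbb P^1$-bundle over each stratum of $X_{x_0}$ are the three cones of the fan of $\mathbb P^1$.
\end{itemize}
Edge contraction, de-bubbling of $x_0$, and specializing $\rho$ are exactly the generizations of strata, so the face morphisms agree. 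The colimits therefore agree, and ${\rm trop}_{\Sigma,\mathrm{rig}}$ is an isomorphism of extended cone complexes.

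\textbf{Expected main obstacle.} The hard part is the injectivity part of Step 3. A single framed stratum $(T,\delta,v)$ may split into several strata of $X_{x_0}$ according to the source component on which $x_0$ lands, since the source graph over a target vertex can have several components. Monodromy and Hurwitz equivalence may identify some of these choices, and possibly some strata are non-unibranch. To handle this, we would check unibranchness using the normality of the twisted stable map stack. We would then verify that the equivalence relation in Definition \ref{def:rigidified_tropical_space} matches the identifications of irreducible boundary strata, via the same monodromy/braid argument Glynn uses to index irreducible strata of $\overline{\mathcal H}$.
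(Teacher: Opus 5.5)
Your outline reproduces the paper's proof step for step. It uses properness and toroidality from Lemmas \ref{lem_Xx0_prop} and \ref{lem_fiber_comp}, and defines ${\rm trop}_{\rm rig}$ by valuations of the target node-smoothing parameters, the $x_0$-bubbling parameter and the tangent-vector toric parameter. It then factors through $\mathcal R$ by reading off the characteristic monoid, and matches strata and face maps type by type. The injectivity issue you single out as the main obstacle is not resolved in the paper either: its proof simply asserts that the toroidal boundary strata are indexed by ${\rm Stab}^{\rm rig,fr}_{0,B}(\vec\mu^*)$. So there you are not missing anything the paper supplies, though normality of the ambient stack would not by itself rule out self-intersecting boundary strata.

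The step that does fail as written is in your Step 2, and equally in Lemma \ref{lem_Xx0_prop} and the paper's proof. It is the claim that the $x_0$-parameter is one extra coordinate adjoined freely, so that the characteristic monoid is $\mathbb N^{E(T)}\oplus\mathbb N^{\epsilon}\oplus\cdots$ with the target parameters among the free generators.

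This is fine when $x_0$ collides with a marked point. It fails when $x_0$ specializes to a source node $\widetilde z$ over a target node $z_e$ with ramification $p$. There $X_{x_0}$ is locally the universal source curve $\{uv=s\}$ over $X$, where $s$ is the smoothing parameter of $\widetilde z$ (so $s^{p}=\xi_e$). The monoid therefore acquires the relation $\bar u+\bar v=\bar s$ rather than a free new generator.

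The skeleton cone is then the cone of positions of $\ell_0$ along the source edge, namely $\{(\ell_e,a): 0\le a\le \ell_e/p\}$ up to integral structure. Its two facets are the two $\epsilon=0$ types with $x_0$ on either adjacent source vertex. The product cone $\overline\sigma^{\rm rig}_\Xi$ instead has only one facet with vanishing $x_0$-coordinate. Its other facet, $\{\ell_e=0\}$ with positive $x_0$-coordinate, corresponds to no stratum, since smoothing the node removes the bubble.

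Consequently ``the identity in these monomial coordinates'' is not onto, and the face maps do not agree. For these types the cone has to be built from the two sub-edge lengths ${\rm val}(u),{\rm val}(v)$ in $\Gamma^+_{\rm src}$ rather than as a product.

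Relatedly, your $\rho^\vee\cap\mathbb Z$ summand gives, after sharpening, no tangent-vector direction when $\rho=\{0\}$. That is geometrically correct, but it is not the paper's $\overline\sigma^{\rm rig}_\Xi$, whose last factor is always $\mathbb R_{\ge0}\cup\{\infty\}$. You should say which version of the tropical space you are proving the isomorphism for.
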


\begin{proof}
By Lemmas \ref{lem_Xx0_prop} and \ref{lem_fiber_comp}, the chosen compactification $
\overline{\mathcal{H}}_{\mathrm{rig},0,\mathbb P^1,D}(\vec{\mu}^*)$
is a proper toroidal Deligne--Mumford stack. Hence, by Abramovich--Caporaso--Payne \cite{ACP15},
its analytification admits a canonical skeleton $
\overline{\Sigma}\!\left(\overline{\mathcal{H}}^{\rm an}_{\mathrm{rig},0,\mathbb P^1,D}(\vec{\mu}^*)\right)$.

We first define the tropicalization map. Let $x\in \overline{\mathcal{H}}^{\rm an}_{\mathrm{rig},0,\mathbb P^1,D}(\vec{\mu}^*)$
be an analytic point represented by a family over a valuation ring. Forgetting the rigidifying data
$x_0$ and the tangent vector at $x_\infty$, we obtain a 
framed decorated tree $(T,\delta,v)$ as in
Section \ref{sec_fhur}. The marked point $x_0$ determines
an additional marked leg $\ell_0$ on the induced source tropical admissible cover. If $x_0$
specializes to the special locus, it contributes the additional boundary parameter described in
Lemma \ref{lem_Xx0_prop}; this is recorded by $\epsilon=1$, while otherwise $\epsilon=0$. Finally, the chosen
compactification of the $\mathbf G_m$-bundle of non-zero tangent vectors at $x_\infty$ contributes one
additional toric parameter. Taking valuations of the node-smoothing parameters, together with the
valuation of the $x_0$-boundary parameter when $\epsilon=1$, and the valuation of the tangent vector
toric parameter, defines a map
$$
{\rm trop}_{\mathrm{rig}}\colon
\overline{\mathcal{H}}^{\rm an}_{\mathrm{rig},0,\mathbb P^1,D}(\vec{\mu}^*)
\to
\overline{\mathcal{H}}^{\rm trop}_{\mathrm{rig},0,\mathbb P^1,D}(\vec{\mu}^*).
$$

We next show that ${\rm trop}_{\mathrm{rig}}$ factors through the skeleton. \'Etale locally near a boundary
point of $\overline{\mathcal{H}}_{\mathrm{rig},0,\mathbb P^1,D}(\vec{\mu}^*)$, the completed local ring is obtained from
the local ring of the framed admissible cover space by adjoining the additional rigidifying
parameters of Section \ref{sec_3.2_rfhur}. Namely, smooth parameters for the free motion of $x_0$, in the boundary case
at most one additional monomial coordinate corresponding to the bubbling or specialization of
$x_0$, and one toric coordinate coming from the chosen compactification of the $\mathbf G_m$-bundle of
non-zero tangent vectors at $x_\infty$. Thus the characteristic monoid is exactly the monoid
generated by the node-smoothing parameters of the framed admissible cover degeneration, together
with the $x_0$-boundary parameter when present, and the tangent vector toric parameter. These are
precisely the coordinates of the cone $\overline{\sigma}^{\mathrm{rig}}_{\Xi}$ attached to the corresponding
rigidified framed decorated type $\Xi$. Hence ${\rm trop}_{\mathrm{rig}}$ factors through the canonical
retraction to the skeleton.

Finally, we identify the two extended cone complexes. A toroidal boundary stratum of
$\overline{\mathcal{H}}_{\mathrm{rig},0,\mathbb P^1,D}(\vec{\mu}^*)$
determines the framed decorated tree $(T,\delta,v)$ coming from the target dual tree and the admissible cover
monodromy data, the additional marked leg $\ell_0$ on the induced source combinatorial admissible
cover, and the indicator $\epsilon\in\{0,1\}$ recording whether $x_0$ contributes the extra
boundary modulus of Lemma \ref{lem_Xx0_prop}. The compactified tangent vector direction contributes the final
toric coordinate. Thus the toroidal boundary strata are indexed by the same rigidified framed
decorated types $\Xi$ that index the cones of
$
\overline{\mathcal{H}}^{\rm trop}_{\mathrm{rig},0,\mathbb P^1,D}(\vec{\mu}^*).
$
Under specialization, nodes are smoothed or created exactly by contracting edges of $T$, the
distinguished vertex is sent to its image, the marked leg $\ell_0$ is carried along under
stabilization of the source combinatorial admissible cover, and the additional $x_0$-boundary
direction is retained exactly in those specializations for which $x_0$ remains on the special
locus. Therefore the face morphisms agree on both sides.

It follows that
$$
\overline{\Sigma}\!\left(\overline{\mathcal{H}}^{\rm an}_{\mathrm{rig},0,\mathbb P^1,D}(\vec{\mu}^*)\right)
\cong
\overline{\mathcal{H}}^{\rm trop}_{\mathrm{rig},0,\mathbb P^1,D}(\vec{\mu}^*)
$$
as extended cone complexes. This completes the proof.
\end{proof}

\section{Comparing \texorpdfstring{$\overline{\mathcal{H}}_{{\rm rig},0,\mathbb{P}^1,D}^{\mathrm{trop}}(\Vec{\mu}^*)$}{Htrop} and space of trees}\label{sec:4-trees}

In this section, we define the topological space $\mathcal{T}_D(\Vec{\mu}^*)$ of {\it framed decorated polynomial trees} with ramification profile $\vec{\mu}^*$ and then prove the following theorem and Theorem \ref{thm:main-1}.

\begin{thm} \label{thm:comp-2}
For each polynomial ramification profile $\vec\mu^*$, there is a canonical isomorphism of extended cone complexes
$$
\Phi_{\vec\mu^*}\colon \overline{\mathcal{T}}_D(\vec\mu^*) \xrightarrow{\sim} \overline{\mathcal{H}}^{\rm trop}_{\mathrm{rig},0,\mathbb P^1,D}(\vec\mu^*).
$$
In particular, $\Phi_{\vec\mu^*}$ is a homeomorphism.
\end{thm}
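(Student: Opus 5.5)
The proof will be a cone-by-cone identification followed by a check that the gluing maps agree. The space $\mathcal T_D(\vec\mu^*)$ has not yet been defined at this point. I will assume, as its name and the introduction suggest, that it is built as a colimit of extended cones indexed by combinatorial types of framed decorated polynomial trees. Such a type consists of:
\begin{itemize}
\item a metric target tree with a distinguished (parametrized) vertex;
\item a Glynn $\vec\mu^*$-decoration, up to Hurwitz equivalence;
\item the source tree mapping to it, with the totally ramified end at $\infty$ and the marked point $x_0$;
\item a boundary length recording the scale of the tangent vector at $x_\infty$.
\end{itemize}

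\textbf{Step 1 (map on types).} I will define $\Phi_{\vec\mu^*}$ on combinatorial types. To a framed decorated polynomial tree type I attach the tuple $\Xi=(T,\delta,v,\Gamma^+_{\rm src},\ell_0,\epsilon,\rho)$ as follows:
\begin{itemize}
\item $(T,\delta,v)$ is the target tree together with its decoration and distinguished vertex;
\item $\Gamma^+_{\rm src}\to T$ is the source side combinatorial admissible cover;
\item $\ell_0$ is the leg carrying $x_0$, and $\epsilon\in\{0,1\}$ records whether $x_0$ sits at a bubble edge;
\item $\rho$ is the cone of the fan of $\mathbb P^1$ determined by the sign or position of the tangent length parameter.
\end{itemize}
I then check that this tuple is a rigidified framed $\vec\mu^*$-decorated type in the sense of Definition \ref{def:rigidified_tropical_space}. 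I also check that Hurwitz-equivalent decorations of trees go to rigidified Hurwitz equivalent types, so the map descends to ${\rm Stab}^{\rm rig,fr}_{0,B}(\vec\mu^*)$.

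For the inverse, I start from $\Xi$. The source graph $\Gamma_{\rm src}$ is a tree because the genus is $0$. It carries the leg over $\infty$ with local degree $D$, which is forced by the profile $(D)$. It also carries the map to $T$ with local degrees read off from $\delta$, so it is a polynomial tree with a framing and decorations. Since the two constructions use the same data, they should be mutually inverse. This gives a bijection on types.

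\textbf{Step 2 (map on cones).} For each type I identify the cones. On the tree side, the cone has one parameter for each edge length of $T$; source edge lengths are determined by the local degree relations $\xi_{i,j}^{p(i,j)}=\xi_i$, which divide target lengths by local degrees. There is one parameter for the $x_0$ bubble when $\epsilon=1$, and one boundary length parameter. I will write down the linear isomorphism onto
\[
\overline\sigma^{\rm rig}_\Xi=(\mathbb R_{\ge0}\cup\{\infty\})^{E(T)}\times(\mathbb R_{\ge0}\cup\{\infty\})^{\epsilon}\times\overline\rho .
\]
I must verify that it respects the integral structures. On the Hurwitz side these are generated by the target smoothing parameters, with the source parameters integral over them, and source edge lengths equal target lengths divided by local degree. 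The lattice should therefore be the target-edge lattice on both sides, provided the tree space is normalized by target lengths. If it is instead normalized by source lengths, I would rescale each edge by the lcm of the local degrees over it; this is the point to watch.

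\textbf{Step 3 (compatibility with faces).} Next I check that the cone maps commute with the face morphisms. Edge contraction in $T$ (length $\to 0$) corresponds to contraction of the tree, with the distinguished vertex mapped to its image. Forgetting or contracting the $x_0$ bubble corresponds to $\epsilon$ dropping to $0$ after stabilization. Specializing $\rho$ corresponds to the boundary length going to $0$ or $\infty$. Both colimit diagrams are then isomorphic, so the colimits are isomorphic as extended cone complexes. The homeomorphism statement follows because both spaces carry the colimit topology of finitely many compact extended cones.

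\textbf{Main obstacle.} I expect the hardest part to be Step 1's claim that tree-side equivalence matches rigidified Hurwitz equivalence exactly. This requires that the framing and rigidification kill all residual automorphisms, so that no two distinct decorated polynomial trees give the same $\Xi$, and that every Glynn-decorated framed tree is realized by some polynomial tree. To handle this, I would first use the totally ramified point at $\infty$ to root the source tree. Then I would invoke the Riemann--Hurwitz equality at each vertex to reconstruct the local degrees from $\delta$ uniquely.
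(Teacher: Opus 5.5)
Your proposal follows essentially the same route as the paper's proof: match the index sets, match the cones, check that the face maps agree, and conclude that the two colimits coincide. In the paper most of this is definitional---each framed decorated polynomial tree carries its type $\Xi$ as part of its data, and the tree-side cone is defined to be $\overline{\sigma}^{\rm rig}_\Xi$ in target-edge coordinates, so no monoid argument or lcm rescaling is needed---and the bijection of types is Lemma~\ref{lem:core_bijection}, whose injectivity comes from the stipulation that $T^\infty\setminus T_{\rm core}$ and the self-map $F$ are the canonical polynomial-tree completion of the core with no free moduli, which is the ingredient your Step~1 inverse (which rebuilds only the finite source tree) and your rooting/Riemann--Hurwitz argument leave out.
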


\subsection{Framed decorated polynomial tree}

Before giving the formal definition, we explain the idea. Roughly speaking, a framed decorated polynomial tree is a polynomial tree with extra marking data remembering how it arises from a degeneration of rigidified framed polynomial covers. 
More specifically, the polynomial dynamics is recorded by an infinite rooted tree and its harmonic self-map, while the admissible cover degeneration is recoreded by the finite core of the tree. The decoration remembers the Hurwitz monodromy data, the framed vertex remembers the parametrized target component, the marked source leg remembers the rigidifying point $x_0$, and the remaining boundary parameters record the possible degeneration of $x_0$ and of the tangent vector rigidification at $x_\infty$.

\begin{defn}\label{def:framed_decorated_polynomial_tree}
A \emph{framed decorated polynomial tree of degree $D$} consists of the following data:
\begin{enumerate}
    \item an infinite rooted metric tree $T^\infty$ with distinguished root end $\infty_{T^\infty}$;

    \item a continuous harmonic map $F \colon T^\infty \to T^\infty$
    of degree $D$, preserving the root end and affine of integer slope on each edge;

    \item a finite connected rooted subtree $T_{\mathrm{core}} \subset T^\infty,$
    called the \emph{core};

    \item a rigidified framed decorated type
    $$
        \Xi=(T,\delta,v,\Gamma_{\rm src}^+,\ell_0,\epsilon,\rho)\in {\rm Stab}^{\mathrm{rig,fr}}_{0,B}(\vec{\mu}^{*}),
    $$
    such that the finite combinatorial data carried by $T_{\mathrm{core}}$ are identified with the source side combinatorial admissible cover induced by $\Xi$. More precisely,
    \begin{enumerate}
        \item the rooted combinatorial type of $T_{\mathrm{core}}$ agrees with the source combinatorial admissible cover $\Gamma_{\mathrm{src}} \to T$
        induced by $(T,\delta,v)$;

        \item the distinguished point $x_\infty$ lies on the root end of $T_{\mathrm{core}}$;

        \item the marked point $x_0$ is represented on $T_{\mathrm{core}}$ by the marked leg $\ell_0$;

        \item the chosen tangent direction at $x_\infty$ is the rooted flag pointing toward the root end;

        \item the decoration on $T_{\mathrm{core}}$ is the source side decoration induced from $(T,\delta,v)$;

        \item the indicator $\epsilon\in\{0,1\}$ records whether $x_0$ contributes the additional boundary modulus, exactly as in Definition \ref{def:rigidified_tropical_space}.
    \end{enumerate}

    \item the local tropical admissible cover Riemann--Hurwitz condition holds at every vertex of the finite core;

    \item the complement $T^\infty\setminus T_{\mathrm{core}}$ is obtained from the boundary data of the core by the polynomial tree completion (or inverse image completion) procedure (i.e., first take a direct limit to get an invariant tree, then apply \cite[Theorem 5.7]{DM08}), and carries no additional free moduli.
\end{enumerate}
\end{defn}

\begin{rmk}\label{rmk-4.3}
In Definition \ref{def:framed_decorated_polynomial_tree}, the definition of harmonic maps and local Riemann-Hurwitz condition are the same as in \cite{CMR16}. Moreover,
the finite core $T_{\mathrm{core}}$ is the only part carrying moduli; the infinite rooted tree $T^\infty$ is regarded as the polynomial tree completion (or inverse image completion) of this rigidified framed decorated core.
\end{rmk}

\begin{defn}
Two framed decorated polynomial trees are \emph{combinatorially isomorphic} if there is an isomorphism $\Phi\colon T^\infty\to (T')^\infty$ of the underlying rooted trees 
such that: 
\begin{enumerate} 
\item $\Phi$ sends the root, the end corresponding to infinity, and the finite core $T_{\rm core}$ to the corresponding root, end, and finite core of $ T'$; 
\item $\Phi$ conjugates the combinatorial dynamics, i.e., $\Phi\circ F=F'\circ \Phi$; 
\item $\Phi$ preserves local degrees on vertices and edges; 
\item the induced map on the finite core identifies the associated source side tropical admissible cover $ \Gamma_{\rm src}^+\to T$ with $ (\Gamma'_{\rm src})^+\to T'; $ 
\item under this identification, the Glynn decoration $\delta$ is carried to $\delta'$ up to framed Hurwitz equivalence, and the distinguished framed vertex is preserved, i.e., $ \Phi(v)=v'$;  \item the marked source leg and rigidifying data are preserved, i.e., $ \Phi(\ell_0)=\ell'_0,\epsilon=\epsilon',$ and $ \rho=\rho'$.
\end{enumerate}
\end{defn}

\begin{rmk}
A combinatorial isomorphism records the discrete framed decorated polynomial tree structure, and forgets the metric parameters.   
\end{rmk}

\begin{defn}
A framed decorated polynomial tree is said to be \emph{of profile $\vec\mu^*$} if the rigidified framed decorated type $\Xi=(T,\delta,v,\Gamma_{\rm src}^+,\ell_0,\epsilon,\rho)$
attached to its core lies in ${\rm Stab}^{\mathrm{rig,fr}}_{0,B}(\vec\mu^*)$.
\end{defn}

\subsection{The space of framed decorated polynomial trees}
Let $\mathcal{I}(\vec\mu^*)$ denote the set of combinatorial isomorphism classes of framed decorated polynomial trees of profile $\vec\mu^*$.
By Definition \ref{def:framed_decorated_polynomial_tree}, every framed decorated polynomial tree of profile $\vec\mu^*$ comes equipped with a finite core together with a rigidified framed decorated type
$$
\Xi=(T,\delta,v,\Gamma_{\rm src}^+,\ell_0,\epsilon,\rho)\in {\rm Stab}^{\mathrm{rig,fr}}_{0,B}(\vec\mu^*).
$$
Since combinatorial isomorphisms of framed decorated polynomial trees preserve the core and the full rigidified framed decorated type, this assignment descends to a well-defined map
$$
\mathcal{I}(\vec\mu^*) \to {\rm Stab}^{\mathrm{rig,fr}}_{0,B}(\vec\mu^*).
$$

\begin{lem}\label{lem:core_bijection}
The map $\mathcal{I}(\vec\mu^*) \to {\rm Stab}^{\mathrm{rig,fr}}_{0,B}(\vec\mu^*)$ is bijective.
\end{lem}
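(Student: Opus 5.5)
We will prove injectivity and surjectivity separately. The guiding principle is Remark \ref{rmk-4.3}: the finite core $T_{\mathrm{core}}$ carries all of the combinatorial data, while the infinite part $T^\infty\setminus T_{\mathrm{core}}$ is determined functorially by the core and carries no further moduli.

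\emph{Surjectivity.} Fix a class $\Xi=(T,\delta,v,\Gamma_{\rm src}^+,\ell_0,\epsilon,\rho)\in{\rm Stab}^{\mathrm{rig,fr}}_{0,B}(\vec\mu^*)$. We build a framed decorated polynomial tree whose attached type is $\Xi$, in three steps.
\begin{enumerate}
\item Take the source combinatorial admissible cover $\Gamma_{\rm src}\to T$ associated to $(T,\delta,v)$, stabilized with the leg $\ell_0$, and root it at the leg over the totally ramified branch value $(D)$. This leg exists and is unique because the first partition is $(D)$. The result is $T_{\mathrm{core}}$, with $x_\infty$ on the root end and the tangent flag pointing toward it.
\item Place integer slopes on the edges using the ramification indices $p(i,j)$. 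This makes the map $\Gamma_{\rm src}\to T$ harmonic, and the local Riemann--Hurwitz equality at each vertex holds because it is part of the definition of an admissible cover, as in \cite{CMR16}.
\item Extend to an infinite tree $T^\infty$ with self-map $F$ of degree $D$ by the polynomial tree completion of condition (6). That is, form the direct limit under iterated preimages to obtain an invariant tree, then apply \cite[Theorem 5.7]{DM08}. The local degrees at the boundary of the core are the input data for that theorem.
\end{enumerate}
To apply \cite[Theorem 5.7]{DM08} in step 3, we must check that the boundary data of the core satisfy its hypotheses: degree $D$ at the root end, and local degrees that sum correctly over each vertex. This follows from harmonicity of $\Gamma_{\rm src}\to T$ together with total ramification over $\infty$.

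\emph{Injectivity.} Suppose two framed decorated polynomial trees have the same class $\Xi$. Conditions (4)(a)--(f) fix the core up to an isomorphism that respects the source admissible cover, the Glynn decoration up to framed Hurwitz equivalence, the framed vertex $v$, the leg $\ell_0$, and the data $\epsilon$ and $\rho$. We must extend this core isomorphism to an isomorphism $\Phi\colon T^\infty\to (T')^\infty$ satisfying $\Phi\circ F=F'\circ\Phi$. By condition (6), both infinite trees are completions of isomorphic cores, so we use the uniqueness half of \cite[Theorem 5.7]{DM08}. Concretely, we extend $\Phi$ inductively on the levels $F^{-n}(T_{\mathrm{core}})$, at each stage matching preimage branches according to their local degrees.

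\emph{Main obstacle.} The hard step is the uniqueness of the completion up to conjugacy. When several preimage branches at a vertex carry the same local degree, they can be matched in more than one way, so the extension is not forced by local degrees alone. We resolve this in two parts. First, any matching of equal-degree branches gives a valid conjugacy. Second, a combinatorial isomorphism only has to preserve the core, the dynamics, and the local degrees, not a particular labelling of these branches. So every choice produces a combinatorially isomorphic tree, and the ambiguity does not affect the class in $\mathcal I(\vec\mu^*)$.

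We must also check that the direct limit is independent of the choice of representative of $\Xi$. Braid moves and global conjugations change $\delta$ but leave the source combinatorial type and the local degrees unchanged. The completion depends only on these data, so it is unchanged.

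Together these give both injectivity and surjectivity, so the map is bijective.
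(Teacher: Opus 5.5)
Your proposal is correct and takes the same route as the paper. Surjectivity comes from building the rigidified framed decorated core from $\Xi$ and extending it by the polynomial tree completion of Definition~\ref{def:framed_decorated_polynomial_tree}(6). Injectivity holds because $\Xi$ fixes the core and the completion carries no additional free moduli. Your extra treatment of how equal-degree preimage branches are matched, and of independence from the representative of $\Xi$, only makes explicit what the paper leaves implicit in that last clause.
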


\begin{proof}
For surjectivity, let $\Xi=(T,\delta,v,\Gamma_{\rm src}^+,\ell_0,\epsilon,\rho)\in {\rm Stab}^{\mathrm{rig,fr}}_{0,B}(\vec\mu^*).$
The data $(T,\delta,v)$ determine the associated source side combinatorial admissible cover, and together with the marked leg $\ell_0$, the rooted flag at $x_\infty$, and the indicator $\epsilon$, they determine a finite rigidified framed decorated core of profile $\vec\mu^*$. By the completion procedure of Definition \ref{def:framed_decorated_polynomial_tree}, this core extends to a framed decorated polynomial tree of profile $\vec\mu^*$. Thus every element of ${\rm Stab}^{\mathrm{rig,fr}}_{0,B}(\vec\mu^*)$ arises from some framed decorated polynomial tree.

For injectivity, suppose two framed decorated polynomial trees of profile $\vec\mu^*$ determine the same rigidified framed decorated type $\Xi=(T,\delta,v,\Gamma_{\rm src}^+,\ell_0,\epsilon,\rho).$
Then their finite cores agree as rigidified framed decorated cores. Since, by Definition \ref{def:framed_decorated_polynomial_tree}, the complement of the core is obtained from the boundary data of the core by the polynomial tree completion procedure and carries no additional free moduli, the two framed decorated polynomial trees are isomorphic. Hence the map is injective.

Therefore the map is bijective.
\end{proof}

\begin{defn} \label{def:4.6}
Define
$$
\mathcal C(\vec\mu^*) :=  \mathcal{I}(\vec\mu^*)={\rm Stab}^{\mathrm{rig,fr}}_{0,B}(\vec\mu^*).
$$

For $\Xi\in \mathcal C(\vec\mu^*)$, define the associated extended cone
$$
\overline{\sigma}_\Xi := \overline{\sigma}^{\mathrm{rig}}_\Xi
=
(\mathbb R_{\ge 0}\cup\{\infty\})^{E(T)}
\times
(\mathbb R_{\ge 0}\cup\{\infty\})^\epsilon
\times
(\mathbb R_{\ge 0} \cup\{ \infty\}).
$$
The first factor records the edge-length parameters of the target tree $T$, the second factor records the additional boundary parameter contributed by $x_0$ when $\epsilon=1$, and the third factor records the toric parameter coming from the chosen compactification of the non-zero tangent vector bundle at $x_\infty$.

Each point of $\overline{\sigma}_\Xi$ determines a metric realization of the rigidified framed decorated core associated to $\Xi$, and hence, by the completion procedure of Definition \ref{def:framed_decorated_polynomial_tree}, a framed decorated polynomial tree, unique up to isomorphism.

If $\Xi'$ is obtained from $\Xi$ by a specialization of rigidified framed decorated types, then there is a natural face morphism
$$
j_{\Xi',\Xi} \colon \overline{\sigma}_{\Xi'}\hookrightarrow \overline{\sigma}_\Xi.
$$
\end{defn}

\begin{defn}\label{def:TD}
The space of framed decorated polynomial trees of profile $\vec\mu^*$ is the extended cone complex
$$
\overline{\mathcal{T}}_D(\vec\mu^*)
:=
\varinjlim_{\Xi\in \mathcal C(\vec\mu^*)}
(\overline{\sigma}_\Xi,j_{\Xi',\Xi}),
$$
where the transition maps are the face morphisms induced by specialization of rigidified framed decorated types.
We equip $\overline{\mathcal{T}}_D(\vec\mu^*)$ with the resulting colimit topology, and call it the {\it cone-complex topology}.
\end{defn}

Now we define a {\it geometric topology} on $\overline{\mathcal{T}}_D(\vec\mu^*)$.

\begin{defn}\label{def:geometric_topology}
Let $[T_n],[T]\in \overline{\mathcal{T}}_D(\vec\mu^*)$. We say that $[T_n]$ \emph{converges geometrically} to $[T]$ if there exists a rigidified framed decorated type
$$
\Xi=(T,\delta,v,\Gamma_{\rm src}^+,\ell_0,\epsilon,\rho)\in {\rm Stab}^{\mathrm{rig,fr}}_{0,B}(\vec\mu^*)
$$
and, for all sufficiently large $n$, realizations of $[T_n]$ and $[T]$ by points in a common cone $\overline{\sigma}_\Xi$ or in a face of $\overline{\sigma}_\Xi$, such that

\begin{enumerate}
\item the discrete rigidified framed decorated data are constant on the non-contracted part for all sufficiently large $n$;

\item the coordinates corresponding to edges contracted in passing from the realization of $[T_n]$ to that of $[T]$ tend to $0$;

\item the remaining cone coordinates converge to the corresponding coordinates of $[T]$;

\item the completion outside the finite core is the polynomial tree completion of Definition \ref{def:framed_decorated_polynomial_tree}, so contributes no additional free parameters.
\end{enumerate}
\end{defn}

\begin{rmk}
The geometric topology is an analogue in the current setting of the geometric topology considered by DeMarco--McMullen \cite{DM08}. Since the infinite rooted completion carries no additional moduli beyond those of the finite rigidified framed decorated core, this topology is entirely controlled by convergence of the finite core data together with the collapse of edges whose lengths tend to $0$.
\end{rmk}

\begin{prop}\label{prop:2-top-agree}
The geometric topology on $\overline{\mathcal{T}}_D(\vec\mu^*)$ agrees with the cone-complex topology.
\end{prop}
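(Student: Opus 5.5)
The plan is to show that convergent sequences coincide in the two topologies, and then upgrade this to equality of the topologies. The upgrade uses the fact that both are sequential topologies. The cone-complex topology on $\overline{\mathcal T}_D(\vec\mu^*)$ is the colimit topology of finitely many extended cones $\overline{\sigma}_\Xi$. Finiteness holds because ${\rm Stab}^{\mathrm{rig,fr}}_{0,B}(\vec\mu^*)$ is finite: there are finitely many $B$-marked stable trees, finitely many Glynn decorations up to Hurwitz equivalence, finitely many choices of $v$ and $\ell_0$, and $\epsilon,\rho$ take finitely many values. A finite colimit of metrizable compact spaces along closed face embeddings is metrizable and compact, hence first countable and sequential. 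We then \emph{define} the geometric topology as the sequential topology whose closed sets are those closed under geometric limits. With this, equality of the topologies reduces to showing that, for every sequence $[T_n]$ and point $[T]$, geometric convergence is equivalent to convergence in the colimit topology.

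\emph{Geometric $\Rightarrow$ cone-complex.} Suppose $[T_n]\to[T]$ geometrically. By Definition \ref{def:geometric_topology}, for large $n$ all $[T_n]$ and $[T]$ are realized in one cone $\overline{\sigma}_\Xi$ or in faces of it. Conditions (2) and (3) say exactly that all coordinates converge in $(\mathbb R_{\ge0}\cup\{\infty\})^{E(T)}\times(\mathbb R_{\ge0}\cup\{\infty\})^\epsilon\times\overline\rho$. Here collapsed edges go to coordinate $0$, which is the image of the face morphism $j_{\Xi',\Xi}$. The inclusion $\overline{\sigma}_\Xi\to\overline{\mathcal T}_D(\vec\mu^*)$ is continuous, so the sequence converges in the colimit topology. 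Condition (4) adds no constraint, because Lemma \ref{lem:core_bijection} and Definition \ref{def:4.6} say the point of $\overline{\sigma}_\Xi$ determines the whole tree up to isomorphism.

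\emph{Cone-complex $\Rightarrow$ geometric.} Suppose $[T_n]\to[T]$ in the colimit topology. There are finitely many cones, so we split $(T_n)$ into finitely many subsequences, each lying in the relative interior of one cone $\overline{\sigma}_{\Xi_k}$. By the definition of the colimit topology, each subsequence converges in $\overline{\sigma}_{\Xi_k}$ to a representative of $[T]$, which necessarily lies in a face $\overline{\sigma}_{\Xi'}\subset\overline{\sigma}_{\Xi_k}$. On each subsequence the discrete data $\Xi_k$ are constant, coordinates along edges that are not in $\Xi'$ tend to $0$, and the remaining coordinates converge; this is conditions (1)–(3). To merge the subsequences we need to let the cone $\Xi$ in Definition \ref{def:geometric_topology} depend on $n$ within a finite set. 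This is how we read the phrase ``a common cone or a face of it''. If instead a single $\Xi$ is required, we use that each $\Xi_k$ specializes to the type of $[T]$, and we work in the star of $\overline{\sigma}_{\Xi'}$.

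\emph{Main obstacle.} The delicate step is checking that the identification along faces is really the same on both sides. When a coordinate tends to $0$, the limit tree computed by the polynomial tree completion (Definition \ref{def:framed_decorated_polynomial_tree}(6)) must be the tree attached to the face point $j_{\Xi',\Xi}(x)$. We would argue as follows. Completion is determined by the core together with $F$. The core metric of the face point is the contraction of the core metric. The inverse-image completion of \cite[Theorem 5.7]{DM08} depends continuously on the edge lengths of the core, since each added edge has length equal to the length of its image divided by the local degree. Therefore contracting edges commutes with completion, and this gives compatibility. The second point requiring care is that geometric limits are unique; this follows because the colimit space is Hausdorff.
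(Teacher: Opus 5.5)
Your proposal is correct and follows essentially the same route as the paper. The paper likewise proves both implications by passing to subsequences, realizing the sequence and its limit in a common extended cone $\overline{\sigma}_\Xi$, matching coordinatewise convergence with conditions (1)--(3), and treating the polynomial tree completion as carrying no extra moduli.

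Your additional framing makes explicit what the paper leaves implicit: the colimit topology is sequential because it is a quotient of a finite union of compact metrizable cones, and the geometric topology is defined by closure under geometric limits. That framing also makes your merging step unnecessary. For this topology it suffices that every colimit-convergent sequence has a geometrically convergent subsequence with the same limit, and your splitting into finitely many cones already provides this. So you need neither the reinterpretation of ``a common cone'' nor the appeal to the star of $\overline{\sigma}_{\Xi'}$; the star fallback would not give a single cone in any case.
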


\begin{proof}
We first show that convergence in the cone-complex topology implies geometric convergence.
Suppose $[T_n]\to [T]$
in the cone-complex topology on $\overline{\mathcal{T}}_D(\vec\mu^*)$.
By definition of the colimit topology, after passing to a subsequence, there exists a rigidified framed decorated type $\Xi\in {\rm Stab}^{\mathrm{rig,fr}}_{0,B}(\vec\mu^*)$
such that each $[T_n]$ is represented by a point $x_n\in \overline{\sigma}_\Xi$, while $[T]$ is represented by a point $x$ in a face $\overline{\sigma}_{\Xi'}\subset \overline{\sigma}_\Xi$, where $\Xi'$ is obtained from $\Xi$ by specialization.

The specialization $\Xi\rightsquigarrow \Xi'$ contracts exactly those target edges whose coordinates tend to $0$, while preserving the rigidified framed decorated data on the non-contracted part. Since the remaining coordinates converge to the corresponding coordinates of $x$, the associated finite rigidified framed decorated cores converge geometrically. By Definition \ref{def:framed_decorated_polynomial_tree} and Remark \ref{rmk-4.3}, the infinite rooted completion contributes no further moduli. Hence $[T_n]\to [T]$ geometrically.

Conversely, suppose $[T_n]\to [T]$ geometrically. Then, after passing to a subsequence, the finite rigidified framed decorated core data stabilize to a common rigidified framed decorated type $\Xi$, and the limiting object is obtained from $\Xi$ by contracting exactly those edges whose lengths tend to $0$. Thus $[T_n]$ is represented by points $x_n\in \overline{\sigma}_\Xi$, while $[T]$ is represented by a point $x$ in the corresponding face $\overline{\sigma}_{\Xi'}\subset \overline{\sigma}_\Xi$. The geometric convergence conditions are precisely that the coordinates of $x_n$ converge to those of $x$ in the Euclidean topology on $\overline{\sigma}_\Xi$. Therefore $[T_n]\to [T]$ in the cone-complex topology. This completes the proof.
\end{proof}

\subsection{Proof of Theorems \ref{thm:comp-2} and \ref{thm:main-1}}

\begin{proof}[Proof of Theorem \ref{thm:comp-2}]
By Definition \ref{def:4.6}, the cones of $\overline{\mathcal{T}}_D(\vec\mu^*)$ are indexed by
$
\mathcal C(\vec\mu^*)= \mathcal{I}(\vec\mu^*) = {\rm Stab}^{\mathrm{rig,fr}}_{0,B}(\vec\mu^*).
$
By Definition \ref{def:rigidified_tropical_space}, the cones of $\overline{\mathcal{H}}^{\rm trop}_{\mathrm{rig},0,\mathbb P^1,D}(\vec\mu^*)$ are indexed by the same set.

For each $\Xi \in {\rm Stab}^{\mathrm{rig,fr}}_{0,B}(\vec\mu^*)$, the associated cone on the $\overline{\mathcal{T}}_D(\vec\mu^*)$-side is, by Definition \ref{def:4.6},
$$
\overline{\sigma}_\Xi
=
(\mathbb R_{\ge 0}\cup\{\infty\})^{E(T)}
\times
(\mathbb R_{\ge 0}\cup\{\infty\})^\epsilon
\times
(\mathbb R_{\ge 0}\cup\{\infty\}),
$$
while the associated cone on the $\overline{\mathcal{H}}^{\rm trop}_{\mathrm{rig},0,\mathbb P^1,D}(\vec\mu^*)$-side is, by Definition \ref{def:rigidified_tropical_space},
$$
\overline{\sigma}^{\mathrm{rig}}_\Xi
=
(\mathbb R_{\ge 0}\cup\{\infty\})^{E(T)}
\times
(\mathbb R_{\ge 0}\cup\{\infty\})^\epsilon
\times
(\mathbb R_{\ge 0}\cup\{\infty\}).
$$
Thus the cones on both sides agree canonically for every $\Xi$.

Now let $\Xi'$ be obtained from $\Xi$ by a specialization of rigidified framed decorated types. On the tropical Hurwitz side, Definition \ref{def:rigidified_tropical_space} associates to this specialization the face morphism
$$
j^{\mathrm{rig}}_{\Xi',\Xi}\colon \overline{\sigma}^{\mathrm{rig}}_{\Xi'}\hookrightarrow \overline{\sigma}^{\mathrm{rig}}_\Xi.
$$
On the polynomial tree side, Definition \ref{def:4.6} associates to the same specialization the face morphism
$$
j_{\Xi',\Xi}\colon \overline{\sigma}_{\Xi'}\hookrightarrow \overline{\sigma}_\Xi.
$$
Since both are induced by the same specialization of the same rigidified framed decorated type $\Xi$, these face morphisms agree under the canonical identification $\overline{\sigma}_\Xi=\overline{\sigma}^{\mathrm{rig}}_\Xi$.

Therefore the two extended cone complexes are colimits of the same system of cones with the same transition maps
$$
\overline{\mathcal{T}}_D(\vec\mu^*)
=
\varinjlim_{\Xi\in {\rm Stab}^{\mathrm{rig,fr}}_{0,B}(\vec\mu^*)}
(\overline{\sigma}_\Xi,j_{\Xi',\Xi}),
$$
and
$$
\overline{\mathcal{H}}^{\rm trop}_{\mathrm{rig},0,\mathbb P^1,D}(\vec\mu^*)
=
\varinjlim_{\Xi\in {\rm Stab}^{\mathrm{rig,fr}}_{0,B}(\vec\mu^*)}
(\overline{\sigma}^{\mathrm{rig}}_\Xi,j^{\mathrm{rig}}_{\Xi',\Xi}).
$$
Hence there is a canonical isomorphism of extended cone complexes
$
\Phi_{\vec\mu^*} \colon \overline{\mathcal{T}}_D(\vec\mu^*)\xrightarrow{\sim}
\overline{\mathcal{H}}^{\rm trop}_{\mathrm{rig},0,\mathbb P^1,D}(\vec\mu^*).
$
In particular, $\Phi_{\vec\mu^*}$ is a homeomorphism with respect to the cone-complex topologies. This completes the proof.
\end{proof}

\begin{proof}[Proof of Theorem \ref{thm:main-1}]
The conclusion follows from Theorem \ref{thm:comp-2} and Theorem \ref{thm_rig_framed_CMR16}.   
\end{proof}

\section{Proof of Theorem \ref{thm:main-compactification}} \label{sec:pf-1.2}

In this section, we prove Theorem \ref{thm:main-compactification}. The proof adapts the general framework of DeMarco--McMullen's compactification
of polynomial moduli by projectivized space of polynomial trees; see \cite[Sections 5-8]{DM08}.

\subsection{Projectivized framed decorated tree space}
Recall that $\mathcal{T}_D(\vec\mu^*)$ is an extended cone complex whose cones
are indexed by rigidified framed decorated types. On each cone
$$
\sigma_\Xi =
(\mathbb R_{\ge 0})^{E(T)}
\times
(\mathbb R_{\ge 0})^\epsilon
\times
\mathbb R_{\ge 0},
$$
the group $\mathbb R_{>0}$ acts by simultaneous rescaling of all
length coordinates. We define
$$
\mathbb{P}\mathcal{T}_D(\vec\mu^*) :=
\bigl(\mathcal{T}_D(\vec\mu^*)\setminus \{0\}\bigr)/\mathbb R_{>0}.
$$
In other words, $\mathbb{P}\mathcal{T}_D(\vec\mu^*)$ is the space of framed decorated
polynomial trees up to global rescaling of the metric and boundary
length data.

\begin{lem}\label{lem:5.1}
The space $\mathbb{P}\mathcal{T}_D(\vec\mu^*)$ is compact.
\end{lem}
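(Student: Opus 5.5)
The plan is to reduce compactness of $\mathbb{P}\mathcal{T}_D(\vec\mu^*)$ to two facts: the cone complex $\mathcal{T}_D(\vec\mu^*)$ has only finitely many cones, and the projectivization of each single cone is a standard simplex. By Definition \ref{def:4.6} and Lemma \ref{lem:core_bijection}, the cones are indexed by ${\rm Stab}^{\mathrm{rig,fr}}_{0,B}(\vec\mu^*)$, so the first step is to show that this index set is finite. A rigidified framed decorated type $\Xi=(T,\delta,v,\Gamma_{\rm src}^+,\ell_0,\epsilon,\rho)$ consists of the following data.
\begin{enumerate}
\item A $B$-marked stable tree $T$ with a distinguished vertex $v$. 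Only finitely many such trees exist, since $|B|$ is fixed and every vertex other than $v$ has valence at least three, which bounds the number of vertices and edges.
\item A Glynn decoration $\delta$ up to Hurwitz equivalence. There are finitely many choices, because the decoration records monodromy in the symmetric group $S_D$ along finitely many edges and legs.
\item The induced source cover $\Gamma_{\rm src}$. This is a finite graph determined by $(T,\delta)$, so it admits finitely many positions for the leg $\ell_0$ and finitely many stabilizations $\Gamma_{\rm src}^+$.
\item The indicator $\epsilon\in\{0,1\}$ and a cone $\rho$ in the fan of $\mathbb P^1$. Each has finitely many values.
\end{enumerate}
Together these give finiteness of ${\rm Stab}^{\mathrm{rig,fr}}_{0,B}(\vec\mu^*)$.

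Next, for each $\Xi$ I would work with the finite (non-extended) cone $\sigma_\Xi=(\mathbb R_{\ge0})^{N_\Xi}$, where $N_\Xi=|E(T)|+\epsilon+1$. The $\mathbb R_{>0}$-action rescales all coordinates simultaneously, so $(\sigma_\Xi\setminus\{0\})/\mathbb R_{>0}$ is identified with the standard simplex
\[
\Delta_\Xi=\Bigl\{x\in(\mathbb R_{\ge0})^{N_\Xi}:\textstyle\sum_i x_i=1\Bigr\},
\]
which is compact. The face morphisms $j_{\Xi',\Xi}$ are linear coordinate inclusions that commute with scaling and preserve the normalization $\sum_i x_i=1$. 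They therefore descend to inclusions of faces $\Delta_{\Xi'}\hookrightarrow\Delta_\Xi$.

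It then follows that $\mathbb{P}\mathcal{T}_D(\vec\mu^*)$ is a quotient of the finite disjoint union $\bigsqcup_\Xi \Delta_\Xi$. A quotient of a compact space is compact, so the continuous surjection from this union gives compactness. Alternatively, in the sequential language of Proposition \ref{prop:2-top-agree}, one can argue directly: given any sequence, pass to a subsequence lying in a single cone by pigeonhole over the finitely many $\Xi$, normalize the coordinates to sum to one, extract a convergent subsequence in $\Delta_\Xi$, and note that the coordinates tending to $0$ correspond to face contractions.

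I expect the main obstacle to be topological bookkeeping rather than any geometric input. One point is that the projectivization should be formed from the unextended complex $\mathcal{T}_D(\vec\mu^*)$, so that points with some coordinate equal to $\infty$ do not appear. I also need the quotient topology on $(\mathcal{T}_D\setminus\{0\})/\mathbb R_{>0}$ to agree with the colimit topology on the glued simplices. This requires that the map from each normalized simplex into the projectivization is continuous and surjective onto the image of its cone. Both hold because the normalization $x\mapsto x/\sum_i x_i$ is a continuous section of the scaling action on $\sigma_\Xi\setminus\{0\}$, and because only finitely many cones occur.
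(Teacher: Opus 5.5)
Your proposal is correct and follows essentially the same route as the paper: there are finitely many rigidified framed decorated types, each projectivized cone $(\sigma_\Xi\setminus\{0\})/\mathbb R_{>0}$ is a compact simplex, and $\mathbb{P}\mathcal{T}_D(\vec\mu^*)$ is obtained by gluing these finitely many simplices along faces. Your explicit finiteness count for ${\rm Stab}^{\mathrm{rig,fr}}_{0,B}(\vec\mu^*)$, and your observation that compactness only requires a continuous surjection from $\bigsqcup_\Xi\Delta_\Xi$, supply details that the paper merely asserts.
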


\begin{proof}
For fixed degree $D$ and fixed profile $\vec\mu^*$, there are only
finitely many rigidified framed decorated combinatorial types. For each
type $\Xi$, the projectivization of the finite cone $\sigma_\Xi\setminus\{0\}$
is a compact simplex, possibly with some faces corresponding to
specializations of $\Xi$. The face maps are induced by contractions of
edges and by the corresponding specializations of the rigidified framed
decorated data. Therefore $\mathbb{P}\mathcal{T}_D(\vec\mu^*)$ is obtained by gluing
finitely many compact simplices along closed faces. Hence it is compact.
\end{proof}

\subsection{The tropicalization map and its projectivization}\label{sec:tau-map}

We next define and study the analogue of the quotient tree map $\tau \colon \mathrm{MPoly}_D^*\to \mathcal T_D^{\rm DM}$ of DeMarco--McMullen \cite[Section 6]{DM08}. 

Let $\overline {\mathcal H}_{\mathrm{rig},0,\mathbb P^1,D}(\vec\mu^*)$
be the proper toroidal compactification constructed in Section \ref{sec_rfhur}.
Recall that
$$
\mathcal{H}_{\mathrm{rig},0,\mathbb P^1,D}(\vec\mu^*)
\cong
{\rm Poly}_D(\vec\mu^*)
$$
by Remark \ref{rmk_polyD}. We denote its boundary by
$$
\partial \overline {\mathcal H}_{\mathrm{rig},0,\mathbb P^1,D}(\vec\mu^*)
:=
\overline {\mathcal H}_{\mathrm{rig},0,\mathbb P^1,D}(\vec\mu^*)
\setminus
\mathcal{H}_{\mathrm{rig},0,\mathbb P^1,D}(\vec\mu^*).
$$ 
Choose a sufficiently small analytic neighborhood $U$ of the boundary, and set 
$$ 
{\rm Poly}_D(\vec\mu^*)^{\mathrm{deg}} := U\cap {\rm Poly}_D(\vec\mu^*). $$ 

We first define the map locally in a toroidal chart. Suppose that a boundary stratum is indexed by a rigidified framed decorated type $\Xi=(T,\delta,v,\Gamma_{\rm src}^+,\ell_0,\epsilon,\rho), $ and let $q_1,\ldots,q_N$ be toroidal boundary coordinates in a chart centered at this stratum. These coordinates record the smoothing parameters of the target nodes, the possible boundary parameter contributed by $x_0$, and the toric parameter coming from the chosen compactification of the non-zero tangent vector bundle at $x_\infty$. The associated cone is 
$$\sigma_\Xi\cong \mathbb R_{\ge 0}^N.$$ 
For a point $f\in {\rm Poly}_D(\vec\mu^*)^{\mathrm{deg}} $ lying in this chart, define $$ \tau_{\vec\mu^*,\Xi}(f) := \bigl( -\log |q_1(f)|,\ldots,-\log |q_N(f)| \bigr) \in \sigma_\Xi. $$ Together with the label $\Xi$, this gives a point of the cone
$\sigma_\Xi$ in the skeleton of the toroidal compactification, by definition of the skeleton of a toroidal embedding. Under the identifications of Theorems \ref{thm_rig_framed_CMR16} and \ref{thm:comp-2}, this cone is
identified with the cone of $\mathcal T_D(\vec\mu^*)$ indexed by the
same rigidified framed decorated type $\Xi$.

Thus, locally near the boundary, we obtain a {\it logarithmic tropicalization map} 
$$ \tau_{\vec\mu^*,\Xi} \colon {\rm Poly}_D(\vec\mu^*)^{\mathrm{deg}}\cap U_\Xi \to \sigma_\Xi \subset \mathcal T_D(\vec\mu^*). $$ The map $\tau_{\vec\mu^*,\Xi}$ depends on the choice of toroidal chart up to bounded error and the integral linear transition maps of the cone complex. More precisely, if $ q'_1,\ldots,q'_M$ is another system of toroidal boundary coordinates on an overlapping chart, then, after possibly refining the charts, the two systems are related by monomial transformations multiplied by units: $$ q_i' = u_i\prod_j q_j^{a_{ij}}, $$ where each $u_i$ is invertible along the boundary. Hence $$ -\log |q_i'(f)| = \sum_j a_{ij}(-\log |q_j(f)|)-\log |u_i(f)|. $$ The monomial part is exactly the integral linear transition map of the toroidal cone complex, while the unit term is bounded near the boundary. Thus the unprojectivized logarithmic vector is well-defined only up to bounded error in overlapping charts. For this reason, the canonical compactifying object is the {\it projectivization} of this logarithmic vector. 

\smallskip

We define the {\it local projectivized tropicalization map} by 
$$ \mathbb{P} \tau_{\vec\mu^*,\Xi}(f) := [ -\log |q_1(f)|:\cdots:-\log |q_N(f)| ] \in \mathbb P\sigma_\Xi \subset \mathbb P\mathcal T_D(\vec\mu^*). $$ Equivalently, $\mathbb{P} \tau_{\vec\mu^*,\Xi}(f)$ records the rigidified framed decorated type $\Xi$ together with the relative rates at which the toroidal boundary parameters vanish. 

Although $\mathbb P\tau_{\vec\mu^*,\Xi}$ is defined using a chart, its limits along {\it sequences} approaching the boundary are intrinsic. Indeed, if $f_n\in {\rm Poly}_D(\vec\mu^*)$ is a sequence converging to the toroidal boundary, then $$ \left\| \tau_{\vec\mu^*,\Xi}(f_n) \right\| \to \infty. $$ Therefore the bounded unit terms appearing in coordinate changes vanish after projectivization. It follows that if the projective logarithmic vectors 
$$\mathbb{P} \tau_{\vec\mu^*,\Xi}(f_n) = [ -\log |q_1(f_n)|:\cdots:-\log |q_N(f_n)| ] $$ converge in one toroidal chart, then the same limiting point of $\mathbb P\mathcal T_D(\vec\mu^*)$ is obtained in any other toroidal chart. 

\begin{defn}
We say that a sequence $\{f_n\} \subset {\rm Poly}_D(\vec\mu^*)$ {\it has projectivized tropical limit} $[\tau]\in \mathbb P\mathcal T_D(\vec\mu^*)$ if, after passing to a subsequence and choosing a toroidal chart near the limiting boundary stratum, the projective logarithmic vectors $\mathbb{P} \tau_{\vec\mu^*,\Xi}(f_n)$ converge to the point of $\mathbb P\sigma_\Xi$ representing $[\tau]$.  
\end{defn}
By the preceding paragraph, this definition is independent of the chosen toroidal chart. 

\begin{rmk}\label{rmk:degen-fam}
Degenerating one-parameter families give a useful special case of this definition. Let $ f\colon \Delta^*\to {\rm Poly}_D(\vec\mu^*)$ be a degenerating holomorphic family, where $\Delta^*$ is the punctured unit disk in $\mathbb C$. Since $\overline {\mathcal H}_{\mathrm{rig},0,\mathbb P^1,D}(\vec\mu^*)$ is proper, after a finite base change the family extends to a holomorphic map $\bar f\colon \Delta\to \overline {\mathcal H}_{\mathrm{rig},0,\mathbb P^1,D}(\vec\mu^*).$ Assume that $\bar f(0)$ lies in a toroidal chart with boundary coordinates $q_1,\ldots,q_N$. Then $$ -\log |q_i(f(t))| = \operatorname{ord}_0(q_i\circ \bar f)\cdot (-\log |t|) +O(1) $$ as $t\to 0$. Consequently the projectivized tropical limit of the family is $$ [ \operatorname{ord}_0(q_1\circ \bar f): \cdots: \operatorname{ord}_0(q_N\circ \bar f) ]. $$ Thus degenerating arcs produce rational projective points of the cone, whereas arbitrary degenerating sequences may converge to arbitrary real points of the projectivized skeleton. 
\end{rmk}

We next prove the properness and surjectivity properties needed for compactification. 
\begin{lem}\label{lem:5.3}
A sequence
$\{f_n\}\subset {\rm Poly}_D(\vec\mu^*)$
leaves every compact subset of ${\rm Poly}_D(\vec\mu^*)$ if and only if,
after passing to a subsequence, it enters 
${\rm Poly}_D(\vec\mu^*)^{\mathrm{deg}}$ and 
$\max_i\{-\log |q_i(f_n)|\}\to \infty.
$
Equivalently, divergence in ${\rm Poly}_D(\vec\mu^*)$ is the same as
approaching the toroidal boundary of
$\overline {\mathcal H}_{\mathrm{rig},0,\mathbb P^1,D}(\vec\mu^*).
$
\end{lem}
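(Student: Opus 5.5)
The argument is pure point-set topology. It uses only that $\overline{\mathcal H}:=\overline{\mathcal H}_{\mathrm{rig},0,\mathbb P^1,D}(\vec\mu^*)$ is a compact space (its analytic space is compact by Lemmas \ref{lem_Xx0_prop} and \ref{lem_fiber_comp}) containing ${\rm Poly}_D(\vec\mu^*)\cong\mathcal H_{\mathrm{rig},0,\mathbb P^1,D}(\vec\mu^*)$ (Remark \ref{rmk_polyD}) as the open complement of a closed boundary divisor $\partial\overline{\mathcal H}$. First we show that the second description in the lemma is equivalent to the first. Then we compare that description with the condition on the coordinates $q_i$.

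\emph{Divergence implies approaching the boundary.} Suppose $\{f_n\}$ leaves every compact subset of ${\rm Poly}_D(\vec\mu^*)$. By compactness of $\overline{\mathcal H}$, a subsequence converges to some $y\in\overline{\mathcal H}$. If $y$ lay in the open part, a small closed neighborhood of $y$ inside ${\rm Poly}_D(\vec\mu^*)$ would be compact and would contain infinitely many $f_n$, which is a contradiction. Hence $y\in\partial\overline{\mathcal H}$. It follows that $f_n\in U$ for large $n$, so $f_n\in{\rm Poly}_D(\vec\mu^*)^{\rm deg}$. Choose a toroidal chart centered at the stratum containing $y$, with boundary coordinates $q_1,\dots,q_N$. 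Since $y$ lies on the boundary divisor, which is cut out locally by $q_1\cdots q_N=0$, some $q_i(y)=0$. By continuity $|q_i(f_n)|\to0$, and so $\max_i\{-\log|q_i(f_n)|\}\to\infty$.

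\emph{Approaching the boundary implies divergence.} Conversely, suppose a subsequence lies in ${\rm Poly}_D(\vec\mu^*)^{\rm deg}$ with $\max_i\{-\log|q_i(f_n)|\}\to\infty$. Let $K\subset{\rm Poly}_D(\vec\mu^*)$ be compact. Then $K$ is a compact subset of $\overline{\mathcal H}$ disjoint from the closed set $\partial\overline{\mathcal H}$. After shrinking $U$ if needed, we may assume $U$ is covered by finitely many toroidal charts and that each chart's boundary coordinates are bounded away from $0$ on $K\cap U$. Concretely, $\prod_i|q_i|$ is continuous and nonvanishing on the compact set $K\cap\overline{U_\Xi'}$ for slightly smaller closed charts $U_\Xi'$, so it is bounded below there. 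Since each $|q_i|$ is also bounded above on a chart, every $|q_i|$ is then bounded below on $K$, and $-\log|q_i|$ is bounded above on $K$. Therefore only finitely many $f_n$ lie in $K$. As $K$ was arbitrary, the subsequence leaves every compact set, and hence so does the original sequence (the lemma's statement is understood up to subsequences).

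\emph{The main obstacle.} The delicate point is the chart bookkeeping. The quantity $\max_i\{-\log|q_i|\}$ is only defined chart by chart, and charts overlap. To handle this we would cover $\partial\overline{\mathcal H}$ by finitely many relatively compact toroidal charts; this is possible by compactness of the boundary. We then use the transition rule $q_i'=u_i\prod_j q_j^{a_{ij}}$ with bounded units. This rule shows that the condition "some coordinate tends to $0$" does not depend on the chart, up to passing to a subsequence that stays in a single chart. We also have to note that the extra coordinates (the $x_0$-bubbling parameter and the toric tangent-vector parameter from the fan of $\mathbb P^1$) genuinely cut out parts of $\partial\overline{\mathcal H}$. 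That holds by the construction of the boundary in Lemma \ref{lem_fiber_comp}.
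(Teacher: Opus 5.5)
Your proof is correct and takes the same route as the paper: properness of $\overline{\mathcal H}_{\mathrm{rig},0,\mathbb P^1,D}(\vec\mu^*)$ gives a convergent subsequence, its limit lies in the boundary exactly when the sequence diverges, and in a toroidal chart lying on the boundary means some $q_i$ vanishes; your chart bookkeeping (finitely many relatively compact charts, lower bounds on $|q_i|$ over a compact $K$) makes explicit what the paper leaves implicit. The one slip is ``hence so does the original sequence'' in the converse, which does not follow from a subsequence diverging; as you note, the lemma must be read up to subsequences, and the paper's proof glosses over the same point.
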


\begin{proof}
Since $\overline {\mathcal H}_{\mathrm{rig},0,\mathbb P^1,D}(\vec\mu^*)$
is proper, every sequence in ${\rm Poly}_D(\vec\mu^*)$ has a subsequence
converging in the compactification. If the sequence does not leave every
compact subset of ${\rm Poly}_D(\vec\mu^*)$, then some subsequence converges
to a point of the open stratum
$$
\mathcal{H}_{\mathrm{rig},0,\mathbb P^1,D}(\vec\mu^*)
\cong
{\rm Poly}_D(\vec\mu^*).
$$
Conversely, if a sequence leaves every compact subset of the open
stratum, then every convergent subsequence in the proper compactification
has limit in the boundary. In toroidal coordinates near such a boundary
point, approaching the boundary is precisely the condition that at least
one boundary coordinate $q_i$ tends to $0$, or equivalently that
$$
\max_i\{-\log |q_i|\}\to \infty.
$$
This completes the proof.
\end{proof}

\begin{cor}\label{cor:5.4}
Every sequence
$f_n\in {\rm Poly}_D(\vec\mu^*)
$ leaving every compact subsets of ${\rm Poly}_D(\vec\mu^*)$ has a projectivized tropical limit.
\end{cor}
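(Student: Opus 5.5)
The plan is to combine Lemma \ref{lem:5.3} with the compactness of $\mathbb P\mathcal T_D(\vec\mu^*)$ from Lemma \ref{lem:5.1}, using a diagonal subsequence argument in a fixed toroidal chart.

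Let $\{f_n\}$ leave every compact subset of ${\rm Poly}_D(\vec\mu^*)$. Since $\overline{\mathcal H}_{\mathrm{rig},0,\mathbb P^1,D}(\vec\mu^*)$ is proper (Lemmas \ref{lem_Xx0_prop} and \ref{lem_fiber_comp}), we may pass to a subsequence converging to a point $y$ of the compactification. By Lemma \ref{lem:5.3}, $y$ lies in the boundary. Choose a toroidal chart $U_\Xi$ centered at the stratum containing $y$, indexed by some $\Xi\in{\rm Stab}^{\mathrm{rig,fr}}_{0,B}(\vec\mu^*)$, with boundary coordinates $q_1,\dots,q_N$. Discarding finitely many terms, all $f_n$ lie in $U_\Xi\cap{\rm Poly}_D(\vec\mu^*)^{\mathrm{deg}}$. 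Since the $f_n$ lie in the open stratum, every $q_i(f_n)$ is nonzero. Shrinking $U_\Xi$ so that $|q_i|<1$ on it, each vector $\tau_{\vec\mu^*,\Xi}(f_n)$ lies in $\sigma_\Xi\cong\mathbb R_{\ge0}^N$. By Lemma \ref{lem:5.3} its norm tends to infinity, so in particular it is nonzero for large $n$. Hence $\mathbb P\tau_{\vec\mu^*,\Xi}(f_n)\in\mathbb P\sigma_\Xi$ is well defined.

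The closed projectivized cone $\mathbb P\sigma_\Xi$ is a compact simplex; by Lemma \ref{lem:5.1} it includes faces identified with specializations of $\Xi$. We therefore extract a further subsequence along which $\mathbb P\tau_{\vec\mu^*,\Xi}(f_n)$ converges to a point $[\tau]$. This point may lie on a proper face of the simplex, that is, some coordinates may vanish relative to the norm. Through the face maps $j_{\Xi',\Xi}$, such a point is still a point of $\mathbb P\mathcal T_D(\vec\mu^*)$, namely one in the cone of a type $\Xi'$ obtained from $\Xi$ by contraction. By the definition of projectivized tropical limit, $[\tau]$ is the desired limit. Its independence of the chart follows from the discussion preceding the definition: unit terms $-\log|u_i|$ are bounded while the norm diverges, so they vanish after projectivization.

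The main obstacle is this chart-independence together with the identification of face points. Under the monomial transition $q_i'=u_i\prod_j q_j^{a_{ij}}$, the limit should transform by the integral linear face map. The argument requires the units $u_i$ to be bounded above and below near $y$. I would secure this by shrinking to a relatively compact neighborhood of $y$ on which all the $u_i$ extend as nonvanishing analytic functions. With that in place, dividing by $\|\tau_{\vec\mu^*,\Xi}(f_n)\|\to\infty$ gives the same limit in $\mathbb P\mathcal T_D(\vec\mu^*)$ in both charts.
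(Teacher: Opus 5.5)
Your proposal is correct and follows essentially the same route as the paper: you use Lemma \ref{lem:5.3} to reduce to a sequence approaching the toroidal boundary with diverging logarithmic coordinates, and then extract a convergent subsequence by compactness. Your version is more careful than the paper's two-line argument. You first fix a single boundary limit point $y$ and a single chart, so that $\mathbb P\tau_{\vec\mu^*,\Xi}(f_n)$ is defined uniformly in $n$ and compactness of the simplex $\mathbb P\sigma_\Xi$ suffices, whereas the paper invokes compactness of all of $\mathbb P\mathcal T_D(\vec\mu^*)$ from Lemma \ref{lem:5.1}.
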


\begin{proof}
By Lemma \ref{lem:5.3}, after passing to a subsequence, $f_n$ lies in
the degenerating locus and $\max_i\{-\log |q_i|\}\to \infty$. The
images
$[\mathbb{P} \tau_{\vec\mu^*,\Xi}(f_n)]
$
lie in the space $\mathbb{P}\mathcal{T}_D(\vec\mu^*)$, which is compact by Lemma \ref{lem:5.1}. Hence they
have a convergent subsequence.
\end{proof}

\begin{prop}\label{prop:real-boundary-points}
For every point
$[\tau]\in \mathbb{P}\mathcal{T}_D(\vec\mu^*)$,
there exists a sequence
$\{f_n\} \subset {\rm Poly}_D(\vec\mu^*)
$
leaving every compact subset of ${\rm Poly}_D(\vec\mu^*)$ such that
$\{f_n\}$ has projectivized tropical limit $[\tau]$.
\end{prop}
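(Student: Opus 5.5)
The plan is to realize $[\tau]$ by smoothing the toroidal boundary stratum attached to the cone that contains it. Concretely, represent $[\tau]$ by a nonzero vector $w=(w_1,\dots,w_N)$ in the relative interior of some cone $\sigma_\Xi$ of $\mathcal T_D(\vec\mu^*)$. Under Theorems \ref{thm:comp-2} and \ref{thm_rig_framed_CMR16}, this cone is a cone of the skeleton of $\overline{\mathcal H}_{\mathrm{rig},0,\mathbb P^1,D}(\vec\mu^*)$. It therefore corresponds to a nonempty toroidal boundary stratum $Z_\Xi$; nonemptiness holds because $\Xi$ indexes an actual boundary stratum, as shown in the identification step of Theorem \ref{thm_rig_framed_CMR16}. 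Pick a point $p\in Z_\Xi$ and a toroidal chart around $p$. In this chart the boundary coordinates $q_1,\dots,q_N$ vanish exactly along $Z_\Xi$, and they are independent monomial coordinates on an \'etale local model of the form (affine toric variety)$\times$(smooth factor).

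First I would treat the case where the chart is smooth, so that the $q_i$ are part of a coordinate system. This covers the target-node parameters, the $x_0$ parameter and the tangent-vector parameter, since by Lemma \ref{lem_frame_toroidal_bdry} the source parameters are determined by the target ones. In the open torus $\{q_i\neq 0\ \forall i\}$ of the chart, define points $f_n$ by
\[
q_i(f_n)=e^{-n w_i}\quad (w_i>0),\qquad \text{other coordinates equal to those of } p .
\]
Because $w$ lies in the relative interior, every $w_i$ is positive. The $f_n$ lie in the interior of the chart, so they belong to $\mathcal H_{\mathrm{rig},0,\mathbb P^1,D}(\vec\mu^*)\cong{\rm Poly}_D(\vec\mu^*)$ by Remark \ref{rmk_polyD}. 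Since $f_n\to p\in\partial\overline{\mathcal H}$, Lemma \ref{lem:5.3} shows that they leave every compact subset. Moreover $\tau_{\vec\mu^*,\Xi}(f_n)=n w$ exactly, so $\mathbb P\tau_{\vec\mu^*,\Xi}(f_n)=[w]=[\tau]$ for every $n$. By the chart-independence discussion preceding Corollary \ref{cor:5.4}, the projectivized tropical limit is $[\tau]$.

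To handle points $[\tau]$ with irrational coordinates, no extra work is needed, since the sequence above uses real exponents directly. An alternative would be to approximate $w$ by rational vectors and realize each by an arc as in Remark \ref{rmk:degen-fam}, followed by a diagonal argument, but I would only fall back on this if needed.

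The main obstacle is the stack and toroidal subtlety. The chart is only \'etale local, and possibly a quotient of a toric chart by a finite group, which also explains the non-normal crossings arising from the relations $\xi_{i,j}^{p(i,j)}=\xi_i$. So I must check two things. First, points chosen in the local model actually map to points of ${\rm Poly}_D(\vec\mu^*)$. Second, the coordinates $q_i$ used to define $\tau$ agree with the monoid generators that index $\sigma_\Xi$. I would handle this by working on the \'etale chart $V\to\overline{\mathcal H}$: choose points in $V$ whose torus coordinates are prescribed, then push them forward. The image stays in the open part because the boundary is the preimage of the boundary, and the monomial description of the characteristic monoid from the proof of Theorem \ref{thm_rig_framed_CMR16} guarantees that the valuation vector is $n w$, up to the integral-linear identification and a bounded error that disappears after projectivization.
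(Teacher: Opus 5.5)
Your proof is correct and follows the same overall plan as the paper. You identify the cone carrying $[\tau]$ with a toroidal boundary stratum via Theorems \ref{thm:comp-2} and \ref{thm_rig_framed_CMR16}, pick a point of that stratum with a toroidal chart, and prescribe the vanishing rates of the boundary coordinates.

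The difference is in how the rates are prescribed. The paper uses holomorphic arcs $q_i(t)=t^{a_i}$, which forces integer exponents. It therefore first treats rational vectors, then approximates a real vector by rational ones and takes one far-out point on each arc. It handles vanishing coordinates by setting those $q_i$ to nonzero constants and invoking compatibility with faces.

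You instead use that the projectivized tropical limit is defined through the Archimedean quantities $-\log|q_i(f_n)|$, so arbitrary positive real rates are attainable directly. With $q_i(f_n)=e^{-nw_i}$ the projective vector is exactly $[w]$ for every $n$, with no bounded error in the chosen chart and no diagonal argument. Taking $w$ in the relative interior of its carrier cone also removes the zero-coordinate case: the sequence converges to a point $p$ of the corresponding stratum, so the chart at $p$ is one ``near the limiting boundary stratum'' as the definition requires.

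The paper's route has one conceptual advantage: each approximant is a genuine one-parameter holomorphic degeneration, whose limit is the rational point described in Remark \ref{rmk:degen-fam}, so it fits the Berkovich-skeleton picture. Your route is shorter and suffices, since the statement only asks for a sequence. Your remark about choosing points on an \'etale chart and pushing them forward, when the local model is a toric singularity rather than an affine space, covers a point the paper's proof does not discuss explicitly.
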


\begin{proof}
Let $[\tau]\in \mathbb{P}\mathcal{T}_D(\vec\mu^*)$
be represented by a point of the projectivization of a cone
$$
\sigma_\Xi =
(\mathbb R_{\ge 0})^{E(T)}
\times
(\mathbb R_{\ge 0})^{\epsilon}
\times
\mathbb R_{\ge 0},
$$
where
$
\Xi=(T,\delta,v,\Gamma_{\rm src}^+,\ell_0,\epsilon,\rho)
$
is a rigidified framed decorated type. Choose a representative
$
a=(a_1,\ldots,a_N)\in \mathbb R_{\ge 0}^N\setminus\{0\}
$
of the projective length vector defining $[\tau]$.

We now interpret this cone as a toroidal cone of the compactification.
By Theorem \ref{thm:comp-2}, the cone complex $\mathcal{T}_D(\vec\mu^*)$ is canonically
identified with the rigidified framed tropical Hurwitz space
$
\mathcal{H}^{\rm trop}_{\mathrm{rig},0,\mathbb P^1,D}(\vec\mu^*).
$
By Theorem \ref{thm_rig_framed_CMR16}, this tropical Hurwitz space is canonically
identified with the Berkovich skeleton
$
\Sigma\!\left(
\mathcal{H}^{\rm an}_{\mathrm{rig},0,\mathbb P^1,D}(\vec\mu^*)
\right)
$
of the proper toroidal compactification
$
\overline {\mathcal H}_{\mathrm{rig},0,\mathbb P^1,D}(\vec\mu^*).
$
Therefore the cone $\sigma_\Xi$ corresponds to a toroidal boundary
stratum of
$
\overline {\mathcal H}_{\mathrm{rig},0,\mathbb P^1,D}(\vec\mu^*).
$
Choose a point $x$ in this boundary stratum and choose a toroidal
chart centered at $x$. 
In this chart the boundary is locally described by
toroidal coordinates
$
q_1,\ldots,q_N
$
together with smooth coordinates along the stratum. The coordinates
$q_i$ correspond to the smoothing parameters of the target nodes, the
possible $x_0$-boundary parameter, and the toric tangent vector
parameter at $x_\infty$.

We first assume that all $a_i$ are rational. After multiplying the
vector $a$ by a common positive integer, we may assume that
$
a_i\in \mathbb Z_{\ge 0}.
$
Choose a general point of the boundary stratum and define a holomorphic
arc in the toroidal chart by
$
q_i(t)=t^{a_i}
$
for the boundary coordinates with $a_i>0$, by taking $q_i(t)$ to be
a non-zero constant for those $a_i=0$, and by choosing the smooth
coordinates along the stratum to be general constants. The family $f_t$ is a degenerating family as in Remark \ref{rmk:degen-fam}. For $t\neq 0$
sufficiently small, the point lies in the open stratum
$$
\mathcal{H}_{\mathrm{rig},0,\mathbb P^1,D}(\vec\mu^*)
\cong
{\rm Poly}_D(\vec\mu^*).
$$
The associated one-parameter family therefore gives polynomials
$f_t\in {\rm Poly}_D(\vec\mu^*)$, and its tropicalization has valuation
vector proportional to $a$. Hence its projectivized tropicalization is
exactly $[\tau]$.

For a general real vector
$
a\in \mathbb R_{\ge 0}^N\setminus\{0\},
$
choose rational vectors
$$
a^{(k)}\in \mathbb Q_{\ge 0}^N\setminus\{0\}
$$
whose projective classes converge to the projective class of $a$.
By the rational case, for each $k$ there exists a degenerating
one-parameter family in ${\rm Poly}_D(\vec\mu^*)$ whose projectivized
tropicalization is represented by $a^{(k)}$. Choosing one sufficiently
far out point on the $k$-th family gives a sequence
$f_k\in {\rm Poly}_D(\vec\mu^*)$
which leaves every compact subset and whose projectivized tropical
limits converge to $[\tau]$.

It remains only to note that this construction is compatible with the
face identifications of $\mathbb{P}\mathcal{T}_D(\vec\mu^*)$. Under the identifications of
Theorems \ref{thm_rig_framed_CMR16} and \ref{thm:comp-2}, the face maps of $\mathcal{T}_D(\vec\mu^*)$ are
exactly the specialization maps of toroidal boundary strata in
$\overline {\mathcal H}_{\mathrm{rig},0,\mathbb P^1,D}(\vec\mu^*).
$
Thus, if some coordinates $a_i$ vanish, the corresponding point lies
on the appropriate face of the cone, which geometrically corresponds to
the associated contraction or specialization of the rigidified framed
decorated type. Hence the constructed sequence realizes the prescribed
point of the projectivized cone complex, not merely a point in a chosen
local chart.

Therefore every point of $\mathbb{P}\mathcal{T}_D(\vec\mu^*)$ is the projectivized
tropical limit of a divergent sequence in ${\rm Poly}_D(\vec\mu^*)$.
\end{proof}

\subsection{Compactification}
We define the set
$$
{\rm Poly}_D(\vec\mu^*)^T
:=
{\rm Poly}_D(\vec\mu^*)\sqcup \mathbb{P}\mathcal{T}_D(\vec\mu^*).
$$
We endow this set with the topology characterized as follows.
The restriction of the topology to the open set
${\rm Poly}_D(\vec\mu^*)$
is its usual complex analytic topology. The restriction of the
topology to the boundary
$\mathbb{P}\mathcal{T}_D(\vec\mu^*)$
is the projectivized cone-complex topology. Finally, a sequence
$\{f_n\} \subset {\rm Poly}_D(\vec\mu^*)$
converges to a boundary point
$[\tau]\in \mathbb{P}\mathcal{T}_D(\vec\mu^*)
$
if and only if $\{f_n\}$ leaves every compact subset of
${\rm Poly}_D(\vec\mu^*)$ and has projectivized tropical limit
$[\tau]$ in the sense of Section \ref{sec:tau-map}.
Equivalently, after passing to a toroidal chart near the limiting
boundary stratum of
$\overline {\mathcal H}_{\mathrm{rig},0,\mathbb P^1,D}(\vec\mu^*)$,
with boundary coordinates
$q_1,\ldots,q_N$,
we have the convergence
$f_n\to [\tau]$
if
$$
\max_i\{-\log |q_i(f_n)|\}\to \infty
$$
and the projective logarithmic vectors
$$
[-\log |q_1(f_n)|:\cdots:-\log |q_N(f_n)|]
$$
converge to the projective length vector representing $[\tau]$, with
the corresponding rigidified framed decorated type stabilizing after
passing to a subsequence. As noted in Section \ref{sec:tau-map}, this condition is independent
of the choice of toroidal coordinates.

\begin{prop} \label{prop:compactification}
The space
${\rm Poly}_D(\vec\mu^*)^T
:=
{\rm Poly}_D(\vec\mu^*)\sqcup \mathbb{P}\mathcal{T}_D(\vec\mu^*)
$
is compact.
\end{prop}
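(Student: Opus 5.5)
I will prove compactness by showing that every sequence in ${\rm Poly}_D(\vec\mu^*)^T$ has a subsequence converging in the topology just defined. This is legitimate because that topology is specified through sequential convergence and is first countable near every point: the open part is a complex manifold, and the boundary $\mathbb P\mathcal T_D(\vec\mu^*)$ is a finite union of compact projectivized simplices (Lemma \ref{lem:5.1}). Neighbourhoods of boundary points can be described by charts $\{f : \max_i -\log|q_i(f)| > R,\ \mathbb P\tau_{\vec\mu^*,\Xi}(f) \in W\}\cup W$, with $W$ open in $\mathbb P\mathcal T_D(\vec\mu^*)$, and there are countably many $R$ and $W$. In a first-countable space, sequential compactness gives compactness once the space is also covered by finitely many such charts. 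For that, I plan to use the finitely many toroidal charts covering the compact boundary of the proper stack $\overline{\mathcal H}_{\mathrm{rig},0,\mathbb P^1,D}(\vec\mu^*)$.

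Let $\{y_n\}$ be a sequence in ${\rm Poly}_D(\vec\mu^*)^T$. Passing to a subsequence, either infinitely many $y_n$ lie in $\mathbb P\mathcal T_D(\vec\mu^*)$, or infinitely many lie in ${\rm Poly}_D(\vec\mu^*)$.

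\emph{Case 1: infinitely many $y_n$ lie in $\mathbb P\mathcal T_D(\vec\mu^*)$.} Lemma \ref{lem:5.1} gives a convergent subsequence in the boundary. Since the subspace topology on the boundary is the projectivized cone-complex topology, this subsequence converges in ${\rm Poly}_D(\vec\mu^*)^T$.

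\emph{Case 2: infinitely many $y_n = f_n$ lie in ${\rm Poly}_D(\vec\mu^*)$.} There are two subcases.
\begin{itemize}
\item If some subsequence stays in a compact subset of ${\rm Poly}_D(\vec\mu^*)$, it has a convergent subsequence in the analytic topology.
\item Otherwise $\{f_n\}$ leaves every compact subset. By Lemma \ref{lem:5.3}, after passing to a subsequence it approaches the toroidal boundary. By properness, a further subsequence converges in $\overline{\mathcal H}_{\mathrm{rig},0,\mathbb P^1,D}(\vec\mu^*)$ to a boundary point $x$. We fix a toroidal chart at $x$, which fixes the type $\Xi$ of the stratum containing $x$, so the type stabilizes. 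Corollary \ref{cor:5.4} then gives a projectivized tropical limit $[\tau]$. By the definition of the topology, $f_n \to [\tau]$.
\end{itemize}
So every sequence has a convergent subsequence.

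The main obstacle is the step in the second subcase: the projectivized tropical limit must land in $\mathbb P\sigma_\Xi$ and be independent of choices. Three things need checking.
\begin{itemize}
\item If some coordinates of the limiting direction are zero, the limit lies on a face $\mathbb P\sigma_{\Xi'}$. I must check that this is consistent with the face maps $j_{\Xi',\Xi}$ under Theorems \ref{thm_rig_framed_CMR16} and \ref{thm:comp-2}.
\item Different charts containing $x$ differ by monomial changes of coordinates times bounded units. Since $\|\tau_{\vec\mu^*,\Xi}(f_n)\|\to\infty$, the bounded error disappears after projectivization, as argued in Section \ref{sec:tau-map}.
\item The limit must be unique, i.e.\ the space is Hausdorff enough that the sequential argument really yields compactness.
\end{itemize}
For the last point, I would verify that the neighbourhood bases above are compatible across the finitely many charts.
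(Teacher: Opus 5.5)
Your route differs from the paper's. Its sequential core is sound, but the reduction from sequential compactness to compactness is misjustified.

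\textbf{The step that needs repair.} The sequential argument works as far as it goes. Boundary sequences are handled by Lemma \ref{lem:5.1} and bounded sequences by the manifold topology. Divergent sequences are handled by properness of $\overline{\mathcal H}_{\mathrm{rig},0,\mathbb P^1,D}(\vec\mu^*)$, Corollary \ref{cor:5.4}, and the definition of convergence to boundary points.

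The problem is the upgrade to compactness. The principle ``in a first-countable space, sequential compactness gives compactness'' is false in general: the ordinal space $[0,\omega_1)$ is first countable and sequentially compact but not compact. Appending ``covered by finitely many such charts'' does not by itself supply what is missing, namely a countability property of open covers (Lindel\"of or second countable).

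Your proposed neighbourhood bases are also not quite right. A set built from a single chart at $x$ is not a neighbourhood of $[\tau]\in\mathbb P\sigma_\Xi$. A sequence tending to another point of the same stratum, outside that chart, also converges to $[\tau]$.

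The clean repair avoids neighbourhood bases entirely:
\begin{enumerate}
\item By construction, the subspace topologies on ${\rm Poly}_D(\vec\mu^*)$ and $\mathbb P\mathcal T_D(\vec\mu^*)$ are the analytic and cone-complex topologies. The first is $\sigma$-compact and the second is compact, so ${\rm Poly}_D(\vec\mu^*)^T$ is Lindel\"of.
\item Sequential compactness implies countable compactness in any space.
\item Lindel\"of plus countably compact gives compact.
\end{enumerate}

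Your three listed obstacles are then unnecessary. Uniqueness of limits plays no role in compactness. Face compatibility is built into the colimit gluing of $\mathbb P\mathcal T_D(\vec\mu^*)$. Chart independence is the remark already made in Section \ref{sec:tau-map}.

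\textbf{Comparison with the paper.} The paper works directly with an open cover $\mathcal U$. Finitely many $V_1,\dots,V_m\in\mathcal U$ cover the compact boundary. There is an $R$ such that $V_1\cup\cdots\cup V_m$ contains every $f$ with $\max_i\{-\log|q_{\alpha,i}(f)|\}>R$ in some boundary chart $U_\alpha$. The remaining set $K_R$ is compact because its closure in the proper compactification misses the boundary, so finitely many further members cover it.

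The existence of this uniform $R$, which the paper states only as an ``equivalently'', is exactly your sequential step run by contradiction. If no such $R$ existed, pick $f_n\notin\bigcup_j V_j$ with toroidal scale tending to infinity. Corollary \ref{cor:5.4} then gives a subsequence converging to some $[\tau]$ lying in an open $V_j$, a contradiction.

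So both proofs rest on the same inputs: properness, Lemma \ref{lem:5.1}, and Corollary \ref{cor:5.4}. Your version isolates the analytic content cleanly in a sequential statement. The paper's split into a boundary neighbourhood plus the compact set $K_R$ yields finiteness directly, and this split is precisely the Lindel\"of-type step your version has to add.
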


\begin{proof}
Choose a finite collection of toroidal charts
$U_\alpha$
covering the boundary
$\partial \overline {\mathcal H}_{\mathrm{rig},0,\mathbb P^1,D}(\vec\mu^*)$,
which is possible as the compactification is proper. In each chart
$U_\alpha$, let
$q_{\alpha,1},\ldots,q_{\alpha,N_\alpha}
$
be toroidal boundary coordinates. The corresponding local boundary
directions are parametrized by the projectivized cone
$\mathbb P(\mathbb R_{\ge 0}^{N_\alpha}\setminus\{0\}).
$
Under the identifications of Theorems \ref{thm_rig_framed_CMR16} and \ref{thm:comp-2}, these local
projectivized cones glue along their faces to the global projectivized
tree space $\mathbb{P}\mathcal{T}_D(\vec\mu^*)$.

Let $K$ be a compact subset of the open stratum whose complement is
contained in the union of these toroidal neighborhoods:
$$
{\rm Poly}_D(\vec\mu^*)\setminus K
\subset
\bigcup_\alpha (U_\alpha\cap {\rm Poly}_D(\vec\mu^*)).
$$
Such a $K$ exists as the complement of the chosen boundary
neighborhood is compact in the proper compactification.

It is enough to prove that every open cover of
${\rm Poly}_D(\vec\mu^*)^T
$
has a finite subcover. Let
$\mathcal U$
be an open cover of ${\rm Poly}_D(\vec\mu^*)^T$. Since
$\mathbb{P}\mathcal{T}_D(\vec\mu^*)$ is compact by Lemma \ref{lem:5.1}, finitely many elements
$V_1,\ldots,V_m\in \mathcal U
$
cover the boundary $\mathbb{P}\mathcal{T}_D(\vec\mu^*)$.

By the definition of the topology on ${\rm Poly}_D(\vec\mu^*)^T$, each
$V_j$ contains, together with its boundary part, all sufficiently
large points in the corresponding toroidal sectors whose projectivized
logarithmic valuation classes lie in that boundary part. Equivalently,
after shrinking to finitely many toroidal charts, there exists a number
$R>0$ such that the finite union
$V_1\cup\cdots\cup V_m$
contains every point $f\in {\rm Poly}_D(\vec\mu^*)$ satisfying
$$
\max_i\{-\log |q_{\alpha,i}(f)|\}>R
$$
in one of the boundary charts $U_\alpha$. In other words, the finite
union $V_1\cup\cdots\cup V_m$ contains the complement of a compact
subset of ${\rm Poly}_D(\vec\mu^*)$.

Let $K_R\subset {\rm Poly}_D(\vec\mu^*)$ denote the remaining part of the
open stratum, namely the set of points which either lie outside the
chosen boundary neighborhoods or have toroidal scale at most $R$ in
the relevant boundary chart. The set $K_R$ is compact, since its
closure in the proper compactification avoids the boundary. Therefore
$K_R$ is covered by finitely many further elements
$V_{m+1},\ldots,V_{m+\ell}\in \mathcal U.
$
Thus
$$
V_1,\ldots,V_m,V_{m+1},\ldots,V_{m+\ell}
$$
cover all of ${\rm Poly}_D(\vec\mu^*)^T$. Hence every open cover has a
finite subcover, and so ${\rm Poly}_D(\vec\mu^*)^T$ is compact.
\end{proof}

\begin{prop} \label{prop:inclusion}
The inclusion
$$
{\rm Poly}_D(\vec\mu^*)\hookrightarrow {\rm Poly}_D(\vec\mu^*)^T
$$
identifies ${\rm Poly}_D(\vec\mu^*)$ with a dense open subset of
${\rm Poly}_D(\vec\mu^*)^T$, and the boundary is naturally homeomorphic to
$\mathbb{P}\mathcal{T}_D(\vec\mu^*)$.
\end{prop}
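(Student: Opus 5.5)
The plan is to verify three things in order: that ${\rm Poly}_D(\vec\mu^*)$ is open in ${\rm Poly}_D(\vec\mu^*)^T$ with its original topology, that it is dense, and that the subspace topology on $\mathbb{P}\mathcal{T}_D(\vec\mu^*)$ is its projectivized cone-complex topology.

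\emph{Openness.} We must show that no sequence of boundary points converges to an interior point, and that no sequence of interior points converges to a boundary point while staying in a compact set. Both statements are built into the definition of the topology. A sequence $f_n\in{\rm Poly}_D(\vec\mu^*)$ converges to a boundary point only if it leaves every compact subset. Hence a point $f\in{\rm Poly}_D(\vec\mu^*)$ has a relatively compact analytic neighborhood that no boundary-convergent sequence enters eventually. Concretely, take $W$ with $\overline W$ compact in ${\rm Poly}_D(\vec\mu^*)$. The closure of $W$ in the proper compactification $\overline{\mathcal H}_{\mathrm{rig},0,\mathbb P^1,D}(\vec\mu^*)$ avoids the boundary, so the toroidal coordinates satisfy $\max_i\{-\log|q_i|\}\le R$ on $W$ in every chart. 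By Lemma \ref{lem:5.3}, $W$ is then open in ${\rm Poly}_D(\vec\mu^*)^T$. Since the topology restricted to ${\rm Poly}_D(\vec\mu^*)$ is the analytic one by construction, the inclusion is a homeomorphism onto an open subset.

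\emph{Density.} This follows from Proposition \ref{prop:real-boundary-points}. Every $[\tau]\in\mathbb{P}\mathcal{T}_D(\vec\mu^*)$ is the projectivized tropical limit of a sequence $f_n$ leaving every compact set, so by definition $f_n\to[\tau]$ in ${\rm Poly}_D(\vec\mu^*)^T$. Therefore every boundary point lies in the closure of ${\rm Poly}_D(\vec\mu^*)$.

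\emph{The boundary.} Set-theoretically, the boundary ${\rm Poly}_D(\vec\mu^*)^T\setminus{\rm Poly}_D(\vec\mu^*)$ equals $\mathbb{P}\mathcal{T}_D(\vec\mu^*)$. Its subspace topology was declared to be the projectivized cone-complex topology, so the identity is a homeomorphism. By Proposition \ref{prop:2-top-agree}, this topology also agrees with the projectivized geometric topology.

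\emph{Main obstacle.} The delicate point is that the ``topology characterized by sequences'' must actually be a topology. One has to check that the mixed sequential convergence is compatible with the boundary topology: if $f_n\to[\tau_n]$-type approximations hold and $[\tau_n]\to[\tau]$, then a diagonal sequence of interior points converges to $[\tau]$. The approach is to define neighborhoods of $[\tau]$ explicitly as follows. Take an open set $V\ni[\tau]$ in $\mathbb{P}\mathcal{T}_D(\vec\mu^*)$, and form the union of $V$ with all $f$ in a toroidal chart having $\max_i\{-\log|q_i(f)|\}>R$ and projective log-vector in $V$. The chart-independence argument of Section \ref{sec:tau-map} controls the overlaps: unit terms are bounded, so for $R$ large the projective classes shift by an amount tending to $0$. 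This shows that these sets form a neighborhood basis inducing exactly the stated convergence. With this basis in hand, the three claims above are immediate.
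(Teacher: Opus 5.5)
Your proposal is correct and follows the paper's proof. Openness and the identification of the boundary with $\mathbb{P}\mathcal{T}_D(\vec\mu^*)$ hold by construction of the topology on ${\rm Poly}_D(\vec\mu^*)^T$, and density comes from Proposition \ref{prop:real-boundary-points}; your extra neighborhood-basis argument, showing that the sequential description actually defines a topology, covers a point the paper takes for granted.
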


\begin{proof}
By construction, the topology on the open part
${\rm Poly}_D(\vec\mu^*)
$
is its usual topology, and no point of $\mathbb{P}\mathcal{T}_D(\vec\mu^*)$ lies in the
open stratum. Hence ${\rm Poly}_D(\vec\mu^*)$ is an open subset of
${\rm Poly}_D(\vec\mu^*)^T$, and the complement is set-wise
$\mathbb{P}\mathcal{T}_D(\vec\mu^*)$.
The topology induced on this complement is, by definition, the
projectivized cone complex topology. Thus the boundary is naturally
homeomorphic to $\mathbb{P}\mathcal{T}_D(\vec\mu^*)$.

It remains to prove density of the open stratum. Let
$[\tau]\in \mathbb{P}\mathcal{T}_D(\vec\mu^*)
$
be any boundary point. By Proposition \ref{prop:real-boundary-points}, there exists a sequence
$\{f_n\}\subset  {\rm Poly}_D(\vec\mu^*)
$
leaving every compact subset of ${\rm Poly}_D(\vec\mu^*)$ whose
projectivized tropical limit is $[\tau]$. By the definition of the
topology on ${\rm Poly}_D(\vec\mu^*)^T$, this means that
$f_n\to [\tau]$.
Therefore every boundary point lies in the closure of
${\rm Poly}_D(\vec\mu^*)$. Hence ${\rm Poly}_D(\vec\mu^*)$ is dense in
${\rm Poly}_D(\vec\mu^*)^T$.
\end{proof}

\subsection{Proof of Theorem \ref{thm:main-compactification}} 
\begin{proof}[Proof of Theorem \ref{thm:main-compactification}]
By Propositions \ref{prop:compactification} and \ref{prop:inclusion}, the space
$$
{\rm Poly}_D(\vec\mu^*)^T
=
{\rm Poly}_D(\vec\mu^*)\sqcup \mathbb{P}\mathcal{T}_D(\vec\mu^*)
$$
is compact, contains ${\rm Poly}_D(\vec\mu^*)$ as a dense open subset, and
has boundary naturally homeomorphic to $\mathbb{P}\mathcal{T}_D(\vec\mu^*)$. Thus
$\mathbb{P}\mathcal{T}_D(\vec\mu^*)$ gives a compactification of 
${\rm Poly}_D(\vec\mu^*)$, and the proof is complete.
\end{proof}

\section{Proof of Theorem \ref{thm:main-relation-to-DM}}\label{sec:pf-1.3}
We compare the decorated tree compactification constructed in Section \ref{sec:pf-1.2}
with the DeMarco--McMullen compactification of polynomial moduli. We first note that our compactification is a compactification of the
{\it parameter stratum} ${\rm Poly}_D(\vec\mu^*)$,
whereas the DeMarco--McMullen compactification is a compactification of
the {\it moduli space} of polynomials. As a result, our boundary contains
additional directions coming from the rigidification, namely the
$x_0$-boundary parameter and the tangent vector compactification at
$x_\infty$. These directions need not change the underlying
DeMarco--McMullen polynomial tree. Thus we first restrict to the
part of the boundary where the underlying DeMarco--McMullen tree is
non-zero.

Let
$$
\Xi=(T,\delta,v,\Gamma_{\rm src}^+,\ell_0,\epsilon,\rho)
\in
{\rm Stab}^{\mathrm{rig},\mathrm{fr}}_{0,B}(\vec\mu^*)
$$
be a rigidified framed decorated type. The corresponding cone of
$\mathcal T_D(\vec\mu^*)$ is
$$
\sigma_\Xi
=
(\mathbb R_{\ge 0})^{E(T)}
\times
(\mathbb R_{\ge 0})^\epsilon
\times
\mathbb R_{\ge 0}.
$$
The first factor records the dynamical edge length data of the target tree. The remaining factors record rigidifying boundary data, i.e., the possible $x_0$-boundary parameter and the toric parameter coming from the compactification of the non-zero tangent vector bundle at $x_\infty$.

Define
$\sigma_\Xi^{\mathrm{dyn}}
\subset
\sigma_\Xi$
to be the complement of the locus on which all
dynamical edge length coordinates vanish and only the rigidifying
coordinates may be non-zero. These subsets are compatible with face maps in the sense that a face map
sends points with non-zero underlying DeMarco--McMullen length data to
points with non-zero underlying DeMarco--McMullen length data whenever
the corresponding forgetful map is defined.
Therefore, they define a subspace
$$
\mathcal T_D(\vec\mu^*)^{\mathrm{dyn}}
\subset
\mathcal T_D(\vec\mu^*).
$$
After projectivizing, we obtain
$\mathbb P\mathcal T_D(\vec\mu^*)^{\mathrm{dyn}}
\subset
\mathbb P\mathcal T_D(\vec\mu^*)$.

On each cone $\sigma_\Xi^{\mathrm{dyn}}$, define 
$$
\Psi_\Xi \colon 
\sigma_\Xi^{\mathrm{dyn}}
\to
\mathcal T_D^{\mathrm{DM}}
$$
by forgetting the Glynn decoration $\delta$, the distinguished framed
vertex $v$, the marked source leg $\ell_0$, the indicator
$\epsilon$, and the rigidifying boundary coordinates, and retaining
only the underlying rooted metrized polynomial tree with its harmonic
self-map, and then consider the subtree spanned by the Julia set of the induced map on the boudary, and collapse each maximal subtree in its complement into a single point, pushing some marked legs into interior (type II) points. When distinct legs are pushed to the same type II point we still keep them distinguished. Since we have restricted to $\sigma_\Xi^{\mathrm{dyn}}$, the
resulting DeMarco--McMullen tree metric is non-zero.

The maps $\Psi_\Xi$ are compatible with specialization. Indeed, if
$\Xi'$ is obtained from $\Xi$ by a specialization of rigidified
framed decorated types, then the face map
$$
j_{\Xi',\Xi}\colon \sigma_{\Xi'}\hookrightarrow \sigma_\Xi
$$
is induced by contracting or specializing the corresponding decorated
tree data. Forgetting the decorations sends this operation to the
corresponding contraction or specialization of the underlying
DeMarco--McMullen polynomial tree. Hence the diagram
$$
\begin{CD}
\sigma_{\Xi'}^{\mathrm{dyn}} @>{j_{\Xi',\Xi}}>>
\sigma_{\Xi}^{\mathrm{dyn}} \\
@V{\Psi_{\Xi'}}VV @VV{\Psi_\Xi}V \\
\mathcal T_D^{\mathrm{DM}} @= \mathcal T_D^{\mathrm{DM}}
\end{CD}$$
commutes wherever the terms are defined. Therefore the local maps glue
to a continuous map of cone complexes
$$
\Psi_{\vec\mu^*}\colon 
\mathcal T_D(\vec\mu^*)^{\mathrm{dyn}}
\to
\mathcal T_D^{\mathrm{DM}}.
$$

This map is equivariant with respect to the scaling action of
$\mathbb R_{>0}$. Indeed, scaling a decorated tree multiplies all
length coordinates by the same positive constant, and after forgetting
decorations this is exactly the usual metric rescaling of the underlying
DeMarco--McMullen tree. Thus, we have
$
\Psi_{\vec\mu^*}(\lambda\cdot \tau)
=
\lambda\cdot \Psi_{\vec\mu^*}(\tau)
$
for all $\lambda\in\mathbb R_{>0}$. Hence $\Psi_{\vec\mu^*}$ descends
to a continuous map on projectivizations
$$
\Psi_{\vec\mu^*}\colon 
\mathbb P\mathcal T_D(\vec\mu^*)^{\mathrm{dyn}}
\to
\mathbb P\mathcal T_D^{\mathrm{DM}}.
$$

We now identify the image. Let
$
[\tau]\in \mathbb P\mathcal T_D(\vec\mu^*)^{\mathrm{dyn}}.
$
Before forgetting decoration, the finite critical points are labelled by
the profile $\vec\mu^*$. After forgetting the rigidified framed Hurwitz
decoration, some of these critical points may become indistinguishable
on the underlying DeMarco--McMullen tree. Geometrically, this is exactly
the collision of finite critical points. When critical points with
multiplicities
$
\mu_{i_1},\ldots,\mu_{i_s}
$
collide, their multiplicities add and give a single critical point of
multiplicity
$\mu_{i_1}+\cdots+\mu_{i_s}.
$
Thus the resulting DeMarco--McMullen ramification profile is a coarser
profile
$
\vec\mu'^*\succeq \vec\mu^*.
$
It follows that
$$
\Psi_{\vec\mu^*}
\bigl(
\mathbb P\mathcal T_D(\vec\mu^*)^{\mathrm{dyn}}
\bigr)
\subset
\bigcup_{\vec\mu'^*\succeq \vec\mu^*}
\mathbb P\mathcal T_D^{\mathrm{DM}}(\vec\mu'^*).
$$

On the open non-collision locus, no finite critical points collide after
forgetting the decoration. Therefore the underlying DeMarco--McMullen
tree has the same ramification profile $\vec\mu^*$, and hence the image
of this locus is contained in
$
\mathbb P\mathcal T_D^{\mathrm{DM,adm}}(\vec\mu^*).
$
Conversely, take a point
$
[\tau_{\mathrm{DM}}]\in
\mathbb P\mathcal T_D^{\mathrm{DM},\mathrm{adm}}(\vec\mu^*).
$
By definition, and by the fact that the classical (type 1 point) Julia set is contained in any non trivial closed invariant subset \cite[Corollary 5.24]{benedetto2019dynamics}, this DeMarco--McMullen tree is induced by an
admissible cover degeneration with ramification profile $\vec\mu^*$.
Choosing the corresponding rigidified framed Hurwitz decoration, namely
the Glynn decoration, the distinguished framed vertex $v$, the marked
source leg $\ell_0$, and the value of $\epsilon$, gives a point of
$
\mathbb P\mathcal T_D(\vec\mu^*)^{\mathrm{dyn}}
$
lying in the open non-collision locus. Its image under
$\Psi_{\vec\mu^*}$ is $[\tau_{\mathrm{DM}}]$.
Therefore the image of
the open non-collision locus is exactly
$\mathbb P \mathcal T_D^{\mathrm{DM,adm}}(\vec\mu^*).
$

Finally, the description of the fibers follows from the definition of
the map. Two points of
$\mathbb P \mathcal T_D(\vec\mu^*)^{\mathrm{dyn}}$
have the same image in $\mathbb P \mathcal T_D^{\mathrm{DM}}$ if and only if, after
forgetting the rigidified framed Hurwitz decoration and the purely
rigidifying boundary coordinates, as well as collapsing the connected components of the complement of the subtree spanned by the Julia set at the boundary, their dynamical length data determine
the same projectivized DeMarco--McMullen polynomial tree. Thus the fiber
over a DeMarco--McMullen tree consists, up to the Hurwitz
equivalence, the choices of Glynn decoration, framed target component
$v$, marked source leg $\ell_0$, and indicator $\epsilon$, together
with the possible rigidifying boundary length parameters, which give the same underlying projectivized
polynomial tree.

This proves Theorem \ref{thm:main-relation-to-DM}.

\bibliographystyle{siam}
\bibliography{references}

\end{document}